  \crefname{theorem}{Theorem}{Theorems}
  \crefname{lemma}{Lemma}{Lemmas}
  \crefname{remark}{Remark}{Remarks}
  \crefname{proposition}{Proposition}{Propositions}
  \crefname{definition}{Definition}{Definitions}
  \crefname{corollary}{Corollary}{Corollaries}
  \crefname{section}{Section}{Sections}
  \crefname{figure}{Figure}{Figures}
\newcommand{\one}{{\triangle^{1}}}
\newcommand{\two}{{\triangle^{2}}}
\newcommand{\Z}{\mathbb{Z}}
\renewcommand{\P}{\mathbb{P}}
\newcommand{\E}{\mathbb{E}}
\newtheorem{theorem}{Theorem}[]
\newtheorem{proposition}[theorem]{Proposition}
\newtheorem{lemma}[theorem]{Lemma}
\newtheorem{corollary}[theorem]{Corollary}
\theoremstyle{definition}
\newtheorem*{remark}{Remark}
\def\llbracket{[\hspace{-.10em} [ }
\def\rrbracket{ ] \hspace{-.10em}]}
\def\f{\mathcal{F}}
\def\ve{\varepsilon}
\def\la{{\longrightarrow}}
\def\build#1_#2^#3{\mathrel{
\mathop{\kern 0pt#1}\limits_{#2}^{#3}}}
\def\wt{\widetilde}
\title{Scaling limits for the peeling process on random maps}
\author{Nicolas Curien and Jean-Fran\c cois Le Gall \\ {\small \textit{Université Paris-Sud}}}
\date{}
\begin{document}
\maketitle
\begin{abstract} We study the scaling limit of the volume and perimeter 
of the discovered regions in the Markovian explorations known as peeling processes
for  infinite random planar maps such as the uniform infinite planar triangulation (UIPT) or quadrangulation (UIPQ). In particular, our results apply to the metric exploration or
peeling by layers algorithm, where the discovered regions are (almost) completed balls, or hulls,
centered at the root vertex. The scaling limits of the perimeter and volume of hulls
can be expressed in terms of the hull process of the Brownian plane studied in our previous work. Other applications include the  metric exploration of the dual graph of our infinite random lattices, and first-passage percolation with exponential edge weights on the dual graph, also known as the Eden model or uniform peeling. 
\end{abstract}

%\note{Un conflit de notation entre $V$ pour le volume dans la section 2 et $V$ pour le nombre de sommet à hauteur $H_{n}+1$ dans la section 3} dans la section 3, V_n remplacé par G_n
%\medskip

%\noindent \emph{\textbf{AMS subject classifications:} }

%\medskip

%\noindent \emph{\textbf{Keywords:} }

\section{Introduction}
The spatial Markov property of random planar maps is one of the most important properties of these random lattices. Roughly speaking, this property says that, after a region of the map has been explored, the law of the remaining part only depends on the perimeter of the discovered region. The spatial Markov property was  first used in the physics literature, without a precise justification: Watabiki \cite{Wat95} introduced the so-called ``peeling process'', which is a growth process
discovering the random lattice step by step. A rigorous version of the peeling
process and its Markovian properties
was given by Angel \cite{Ang03} in the case of the Uniform Infinite Planar Triangulation (UIPT), which had been defined by Angel and Schramm \cite{AS03} as the local limit of 
uniformly distributed plane triangulations with a fixed size. The peeling process has been used since  to derive information about the metric 
properties of the UIPT \cite{Ang03}, 
about percolation \cite{Ang03,ACpercopeel,MN13} and simple random walk \cite{BCsubdiffusive} on the UIPT and its generalizations, and more recently about the conformal structure \cite{CurKPZ} of random planar maps. It also plays a crucial role in the construction of ``hyperbolic'' random triangulations \cite{AR13,CurPSHIT}. 

In the present paper, we derive scaling limits for the perimeter and the volume 
of the discovered region in a peeling process of the UIPT. Our methods also apply to
the Uniform Infinite Planar Quadrangulation (UIPQ), which was constructed independently by Krikun \cite{Kri05} and by
Chassaing and Durhuus \cite{CD06} (the equivalence between these two consructions was obtained by M\'enard \cite{Men08}). By considering the
special case of the peeling by layers, we get scaling limits for  the volume and 
the boundary length  of the hull of radius $r$ centered at the root of the UIPT, or of the UIPQ (the hull of radius $r$ is obtained by ``filling in the finite holes'' in the ball of radius $r$).
The limiting processes that arise in these scaling limits coincide with those that
appeared in our previous 
work \cite{CLGHull} dealing with the hull process of the Brownian plane. This is not surprising since the Brownian plane is conjectured to be the universal scaling limit
of many infinite random lattices such as the UIPT, and it is known that this conjecture holds in the special case of the UIPQ \cite{CLGplane}. 
We also apply our results to both the dual graph distance and the first-passage percolation distance 
corresponding to exponential edge weights on the dual graph of the UIPT (this
first-passage percolation model is also known as the Eden model). In particular, we show that the volume and perimeter of the hulls
with respect to each of these two metrics have the same scaling limits 
as those corresponding to the graph distance, up to explicit deterministic multiplicative factors. \bigskip

For the sake of clarity, the following results are stated and proved in the case of the UIPT
corresponding to type II triangulations in the terminology of Angel and Schramm \cite{AS03}. In type II triangulations, loops are not allowed but there may be multiple edges.
Section \ref{sec:general} explains the changes that are needed for the extension
of our results to other random lattices such as the UIPT 
for type I triangulations or the UIPQ. In these extensions, scaling limits remain the same, but different constants are involved.
In the case of type II triangulations, the three basic constants that
arise in our results are
$$ \mathsf{p}_{\two} = (\frac{2}{3})^{2/3}, \quad \mathsf{v}_{\two} = (\frac{2}{3})^{7/3} \quad \mbox{and } \quad  \mathsf{h}_{\two} = 12^{-1/3}\;.$$
Here the subscript $\two$ emphasizes the fact that these constants are relevant
to the case of type II triangulations.

So, except in Section \ref{sec:general}, all triangulations in this article are type II triangulations. The corresponding UIPT is denoted by $T_{\infty}$. This is an infinite random triangulation of the plane given with a distinguished oriented edge whose tail vertex is called the origin (or root vertex) of the map.  If $\mathbf{t}$ is a  rooted finite triangulation with a simple boundary $\partial \mathbf{t}$, we denote the number of inner vertices of $\mathbf{t}$ by $|\mathbf{t}|$ and the boundary length of
$\mathbf{t}$ by $| \partial \mathbf{t}|$. Furthermore,
we say that $\mathbf{t}$ is a subtriangulation of $T_{\infty}$ and write $\mathbf{t} \subset T_{\infty}$, if $T_{\infty}$ is obtained from $\mathbf{t}$ by gluing an infinite triangulation with a simple boundary along the boundary of $\mathbf{t}$ (of course we 
also require that the root of $T_\infty$ coincides with the root of $\mathbf{t}$
after this gluing operation).  If $\mathbf{t} \subset T_{\infty}$ and $e$ is an edge of $\partial \mathbf{t}$, the triangulation obtained by the peeling of $e$ is the triangulation $\mathbf{t}$ to which we add the face incident to $e$ that was not
already in $\mathbf{t}$, as well as the finite region that the union of 
$\mathbf{t}$ and this added face may enclose (recall that the UIPT 
has only one end \cite{AS03}). An exploration process $  (\mathsf{T}_{i})_{i\geq 0}$ is a sequence of subtriangulations of the UIPT with a simple boundary such that $ \mathsf{T}_{0}$ consists only of the root edge
(viewed as a trivial triangulation) and for every $i \geq 0$ the map $ \mathsf{T}_{i+1}$ is obtained from $ \mathsf{T}_{i}$ by peeling one edge of its boundary. If the choice of this edge is independent of $ T_{\infty} \backslash \mathsf{T}_{i}$, the exploration is
said to be \emph{Markovian} and we call it a  \emph{peeling process}. Different peeling processes correspond to different ways of choosing the edge to be peeled at
every step. 
See Section \ref{sec:peeling} for a more rigorous presentation.

Our first theorem complements results due to Angel \cite{Ang03} by describing the scaling limit of the perimeter and volume of the discovered region in a peeling process. We let  $(S_{t})_{t \geq 0}$ denote the stable Lévy process with index $3/2$ and only negative jumps,  which starts from $0$ and is normalized so that its Lévy measure is $3/(4 \sqrt{\pi}) |x|^{-5/2} \mathbf{1}_{x < 0}$, or equivalently $ \mathbb{E}[\exp( \lambda S_{t})] = \exp( t \lambda^{3/2})$ for any $\lambda,t \geq 0$. 
The process $(S_{t})_{t \geq 0}$ conditioned to stay nonnegative
is then denoted by $(S_{t}^+)_{t \geq 0}$   (see \cite[Chapter VII]{Ber96} for 
a rigorous definition of $(S_{t}^+)_{t \geq 0}$). We also let $\xi_{1}, \xi_{2}, \ldots $ be a sequence of  independent real random variables with density  
$$ \frac{1}{\sqrt{2\pi x^5}} e^{- \frac{1}{2x}} \mathbf{1}_{\{x >0\}}\,.$$ 
We assume that this sequence is independent of the process $(S_{t}^+)_{t \geq 0}$
and, for every $t \geq 0$, we set $Z_{t} = \sum_{t_{i} \leq t} \xi_{i} \cdot (\Delta S_{t_{i}}^+)^2$ where $t_{1}, t_{2}, \ldots$ is a measurable enumeration of the jumps of $S^+$.

\begin{theorem}[Scaling limit for general peelings] \label{thm:scalingpeeling} For any peeling process $( \mathsf{T}_{n})_{n \geq 0}$ of the UIPT, we have the following convergence in distribution in the sense of Skorokhod
$$ \left( \frac{| \partial \mathsf{T}_{[nt]} |}{ \mathsf{p}_{\two} \cdot n^{2/3}}, \frac{| \mathsf{T}_{[nt]}|}{ \mathsf{v}_{\two} \cdot n^{4/3}} \right)_{t \geq 0}  \xrightarrow[n\to\infty]{(d)}  \left(S^+_{t},Z_{t}\right)_{t \geq 0}.$$  
\end{theorem}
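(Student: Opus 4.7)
By the spatial Markov property of the UIPT, conditionally on $\mathsf{T}_n$ the law of $T_\infty\setminus \mathsf{T}_n$ depends only on $|\partial \mathsf{T}_n|$. Consequently the pair $(|\partial \mathsf{T}_n|,|\mathsf{T}_n|)_{n\geq 0}$ is a Markov chain whose law does \emph{not} depend on the peeling algorithm, so I would fix once and for all one convenient deterministic rule (for instance, always peel the edge immediately to the right of the current root) and establish the theorem for that choice; this is where the quantifier ``any peeling process'' is absorbed.

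\textbf{One-step transitions and perimeter.} Using standard enumeration of triangulations of a polygon, when $|\partial \mathsf{T}_n|=p$ the peeled triangle either has a fresh apex (perimeter $+1$, no inner vertex added) with probability $q_0(p)$, or else identifies with a boundary vertex so as to enclose a finite region of boundary length $k+1$ with probability $q_{-k}(p)$, $k\geq 1$; conditionally on the perimeter decrement $-k$, the enclosed piece is an independent Boltzmann triangulation with simple boundary of perimeter $k+1$. Precise asymptotics as $p\to\infty$ then present the perimeter chain as the Doob $h$-transform, by Angel's harmonic function for non-extinction, of a random walk whose step law lies in the domain of attraction of the spectrally negative $3/2$-stable law, with normalization producing the constant $\mathsf{p}_{\two}$. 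Invoking the invariance principle of Caravenna--Chaumont for random walks conditioned to stay positive (or an ad hoc variant tailored to this setting) then gives
\[
n^{-2/3}\,|\partial \mathsf{T}_{[nt]}| \xrightarrow[n\to\infty]{(d)} \mathsf{p}_{\two}\, S^+_t
\]
in the Skorokhod sense.

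\textbf{Volume increments.} Decomposing $|\mathsf{T}_n|=\sum_{k\leq n} V_k$, where $V_k=0$ on a ``$+1$'' step and $V_k$ counts the inner vertices added by a negative-jump step (those strictly inside the swallowed region, plus the boundary vertices that become interior), the $V_k$ are conditionally independent given the perimeter process with explicit laws. The key distributional input is
\[
p^{-2}\,|\mathrm{Boltz}(p)| \xrightarrow[p\to\infty]{(d)} \mathsf{v}_{\two}\cdot \xi,
\]
where $|\mathrm{Boltz}(p)|$ is the number of vertices of a Boltzmann triangulation with simple boundary of perimeter $p$ and $\xi$ has the density appearing in the theorem; this can be obtained either by a direct saddle-point analysis of the explicit partition function or as a consequence of the known convergence of Boltzmann disks to Brownian disks. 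Combined with the perimeter limit, this yields joint convergence of the contribution to $|\mathsf{T}_{[nt]}|$ coming from perimeter jumps of size $\geq \varepsilon n^{2/3}$ toward $\mathsf{v}_{\two}\sum_{t_i\leq t,\,\Delta S^+_{t_i}\geq \varepsilon}\xi_i (\Delta S^+_{t_i})^2$, jointly with the perimeter.

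\textbf{Main obstacle.} The delicate step, and the one I expect to require the most effort, is to control the contribution of \emph{small} jumps on both sides of the convergence. On the limit side, $\sum_{t_i\leq t,\,\Delta S^+_{t_i}<\varepsilon}\xi_i(\Delta S^+_{t_i})^2\to 0$ in probability as $\varepsilon\downarrow 0$ follows from a first-moment computation combining the $3/2$-stable L\'evy measure with the finite mean of $\xi$. In the prelimit, one needs a uniform-in-$n$ analogue for the volume produced by perimeter decrements of size $<\varepsilon n^{2/3}$; the plan is to bound its expectation by $C\varepsilon^\alpha n^{4/3}$, uniformly in $n$ and for some $\alpha>0$, using $\mathbb{E}|\mathrm{Boltz}(p)|\asymp p^2$ together with the tail of the step distribution identified above. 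Combining these two small-jump bounds with the large-jump joint convergence, and checking Aldous-type tightness coordinatewise, then yields the joint Skorokhod convergence asserted by the theorem.
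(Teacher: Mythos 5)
Your outline follows the same route as the paper: reduce to the algorithm-independent law of $(|\partial\mathsf{T}_n|,|\mathsf{T}_n|)$, identify the perimeter chain as Angel's $h$-transform of a random walk attracted to the spectrally negative $3/2$-stable law and apply Caravenna--Chaumont, decompose the volume into contributions of the filled-in Boltzmann pieces, and handle small and large perimeter jumps separately. One constant is off: the volume of a Boltzmann triangulation of the $p$-gon satisfies $p^{-2}|\mathrm{Boltz}(p)|\to\mathsf{b}_{\two}\,\xi$ with $\mathsf{b}_{\two}=2/3$, not $\mathsf{v}_{\two}$; the relation $\mathsf{v}_{\two}=(\mathsf{p}_{\two})^2\,\mathsf{b}_{\two}$ is precisely what produces the clean limit $Z_t=\sum\xi_i(\Delta S^+_{t_i})^2$ after rescaling by $\mathsf{v}_{\two}n^{4/3}$, since a perimeter jump of macroscopic size $\Delta S^+$ corresponds to a hole of boundary length $\approx\mathsf{p}_{\two}n^{2/3}\,|\Delta S^+|$. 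You should also isolate the contribution of boundary vertices that later become interior (the paper calls it $V^*_n$, bounded by $n+2$) so that only the Boltzmann pieces matter at scale $n^{4/3}$; your phrase ``plus the boundary vertices that become interior'' is the right idea but needs this explicit $O(n)$ bound.
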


The proof of Theorem \ref{thm:scalingpeeling} relies on the 
explicit expression of the transition probabilities of the peeling process. It
follows from this explicit expression that 
the process of perimeters $(| \partial \mathsf{T}_{n} |)_{n\geq 0}$ is 
a $h$-transform of a random walk with independent increments in the domain of attraction of a spectrally negative stable distribution with index $3/2$
(Proposition \ref{prop:positive}). This $h$-transform is interpreted
as   conditioning the random walk  to stay above level $2$, and in the scaling
limit this leads to the process  $(S_{t}^+)_{t \geq 0}$. The common 
distribution of the variables $\xi_{i}$ is the scaling limit of the volume of a  
Boltzmann triangulation (see Section \ref{sec:enumer})
conditioned to have a large boundary size. The appearance of this distribution
is explained by the fact that the ``holes'' created by the peeling process
are filled in by finite triangulations distributed according to Boltzmann weights
(this is called the free distribution in \cite[Definition 2.3]{AS03}). As a corollary of Theorem \ref{thm:scalingpeeling}, we prove that any peeling process of the UIPT will eventually discover the whole triangulation, i.e, $\cup \mathsf{T}_{n} = T_{\infty}$, no matter what peeling algorithm is used (of course as long as the exploration is Markovian), see Corollary \ref{cor:bouffetout}. 
We note that Theorem \ref{thm:scalingpeeling} can be applied to
various peeling processes that have been considered in earlier works:  peeling along percolation interfaces \cite{Ang03,ACpercopeel}, peeling along simple random walk \cite{BCsubdiffusive}, peeling along a Brownian or a SLE$_{6}$ exploration of the Riemann surface 
associated with the UIPT \cite{CurKPZ}, etc.  In the present work, we apply Theorem \ref{thm:scalingpeeling} to three specific peeling algorithms, each of which
is related to a  ``metric'' exploration of the UIPT. The first one is the peeling by layers, which essentially grows balls for the graph distance on the UIPT, the second one is the peeling by layers in the dual map of the UIPT and the last one is the uniform peeling, which is related to first-passage percolation with exponential edge weights on
the dual map of the UIPT.  \medskip

\paragraph{Scaling limits for the hulls.}  For every integer $r\geq 1$, 
the ball $B_{r}(T_{\infty})$ is defined as the union of all faces of $T_\infty$
whose boundary contains at least one vertex at graph distance smaller than or equal to $r-1$ from the origin (when $r=0$ we agree that  $B_{0}(T_{\infty})$ is the trivial
triangulation consisting only of the root edge). 
The hull $B_{r}^\bullet(T_{\infty})$  is then obtained by adding to the ball $B_{r}(T_{\infty})$ the
bounded components of the complement of this ball
(see Fig.\,\ref{fig:boules}). Note that $B_{r}^\bullet(T_{\infty})$ is a finite
triangulation with a simple boundary. One can define
a particular peeling process $  (\mathsf{T}_{i})_{i\geq 0}$ (called the peeling by
layers) such that, for every $n\geq 0$, there exists a random integer $H_n$
such that $B_{H_n}^\bullet(T_{\infty})\subset\mathsf{T}_n\subset B_{H_n+1}^\bullet(T_{\infty})$. Scaling limits for the volume and the boundary 
length of the hulls can then be derived by applying Theorem \ref{thm:scalingpeeling}
to this particular peeling algorithm. A crucial step in this derivation is to get information about the
asymptotic behavior of $H_n$ when $n\to\infty$ (Proposition \ref{prop:scalinglayers}). Before stating our limit theorem for hulls, we need
to introduce some notation.

\begin{figure}[h]
\begin{center}\includegraphics[width=13cm]{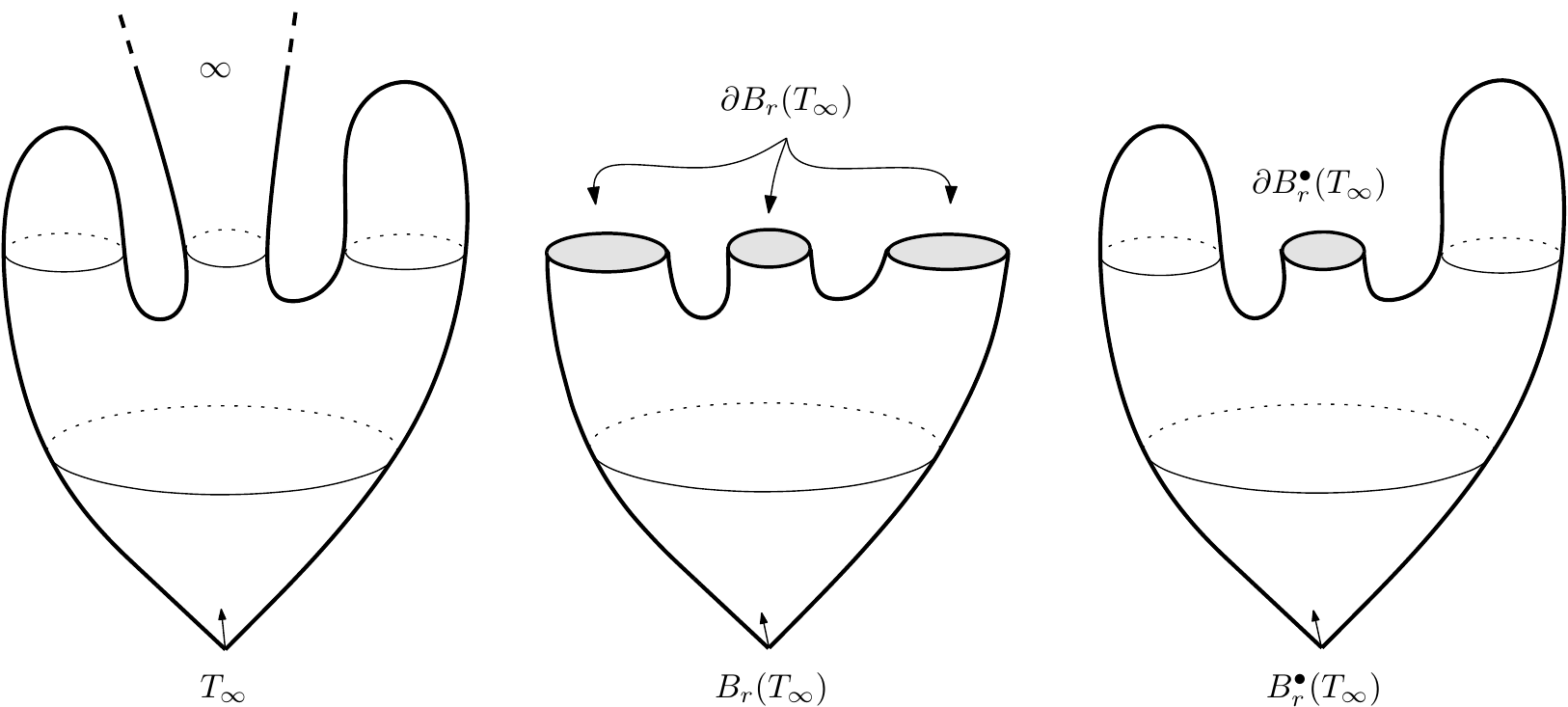}
\caption{ \label{fig:boules} From left to right, the ``cactus'' representation of the UIPT, the ball $B_r(T_{\infty})$, whose boundary may have several components, and  the hull
$B^\bullet_{r}(T_{\infty})$,
whose boundary  is a simple cycle.}
\end{center}
 \end{figure}
 
 For every real $u\geq 0$, set $\psi(u)=u^{3/2}$. The continuous-state branching process with branching mechanism $\psi$ is
the Feller Markov process $(X_t)_{t\geq 0}$ with values in $ \mathbb{R}_+$, whose semigroup is characterized as
follows: for
every $x,t\geq 0$ and every $\lambda> 0$,
$$E[e^{-\lambda X_t}\mid X_0=x]= \exp\Big(-x\Big(\lambda^{-1/2} + t/2\Big)^{-2}\Big).$$
Note that $X$ gets absorbed at $0$ in finite time. It is easy to construct a process
$( \mathcal{L}_t)_{t\geq 0}$ with c\`adl\`ag paths such that 
the time-reversed process $( \mathcal{L}_{(-t)-})_{t \leq 0}$ (indexed by negative times) is distributed
as  $X$ ``started from $+\infty$ at time $-\infty$'' and conditioned to hit zero at time $0$
(see \cite[Section 2.1]{CLGHull} for a detailed presentation of the process $\mathcal{L}$). We consider the sequence $(\xi_{i})_{i\geq 1}$ introduced before Theorem \ref{thm:scalingpeeling}, and
we assume that this sequence is independent of $ \mathcal{L}$. We then set,
for every $t \geq 0$
$$ \mathcal{M}_{t} = \sum_{s_{i} \leq t} \xi_i \cdot ( \Delta \mathcal{L}_{s_{i}})^2,$$ 
where $s_{1}, s_{2},\ldots $ is a measurable enumeration of the jumps of $ \mathcal{L}$.

\begin{theorem}[Scaling limit of the hull process]
 \label{thm:scalinghull} We have the following convergence in distribution in the sense of Skorokhod,
 \begin{eqnarray*} \left( n^{-2}| \partial B^\bullet_{[nt]}(T_\infty)|, n^{-4}|  B^\bullet_{[nt]}(T_\infty)|\right)_{t \geq 0} & \xrightarrow[n\to\infty]{(d)} & \Big(\mathsf{p}_{\two}\cdot \mathcal{L}_{t/ \mathsf{h}_{\two}}, \mathsf{v}_{\two} \cdot \mathcal{M}_{t/ \mathsf{h}_{\two}}\Big)_{t\geq 0}.  \end{eqnarray*}
\end{theorem}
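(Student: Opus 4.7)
The plan is to deduce Theorem \ref{thm:scalinghull} from Theorem \ref{thm:scalingpeeling} specialized to the peeling-by-layers algorithm, combined with a Lamperti-type time change that converts the peeling-step parameter into the hull-radius parameter. The key extra input beyond Theorem \ref{thm:scalingpeeling} is Proposition \ref{prop:scalinglayers}, which controls the asymptotic behaviour of the layer counter $H_n$.

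First, I would apply Theorem \ref{thm:scalingpeeling} to the peeling-by-layers process $(\mathsf{T}_n)_{n \geq 0}$, giving the Skorokhod convergence
\[
\Bigl(\frac{|\partial \mathsf{T}_{[nt]}|}{\mathsf{p}_{\two}\,n^{2/3}},\ \frac{|\mathsf{T}_{[nt]}|}{\mathsf{v}_{\two}\,n^{4/3}}\Bigr)_{t\geq 0} \xrightarrow[n\to\infty]{(d)} (S^+_t,\,Z_t)_{t \geq 0}.
\]
Since the perimeter of $\mathsf{T}_n$ is of order $n^{2/3}$ and peeling by layers spends a number of steps comparable to the current boundary length to complete one new layer, I expect $n^{-1/3} H_n$ to converge to an explicit Lamperti-type functional of $S^+$, with the constant $\mathsf{h}_{\two}$ entering through this clock. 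Proposition \ref{prop:scalinglayers} is designed to deliver this convergence jointly with the display above. The inclusion $B^\bullet_{H_n}(T_\infty) \subset \mathsf{T}_n \subset B^\bullet_{H_n+1}(T_\infty)$ then allows me to replace $\mathsf{T}_n$ by $B^\bullet_{H_n}(T_\infty)$ in the scaling limit up to a single partially-explored layer, whose perimeter and volume are of lower order and vanish in the rescaling.

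Inverting the map $n \mapsto H_n$ and using the continuity of time-change/inversion on Skorokhod space at continuous strictly increasing limits, the convergence above translates into a scaling limit for the hull quantities at rate $N^2$ for the perimeter and $N^4$ for the volume, where the limits are the processes $(S^+, Z)$ evaluated at a Lamperti-type time change. It then remains to identify this pair with $(\mathcal{L}_{t/\mathsf{h}_{\two}}, \mathcal{M}_{t/\mathsf{h}_{\two}})$. For the perimeter coordinate this is the classical Lamperti correspondence: the $\psi$-CSBP with $\psi(u) = u^{3/2}$ is the Lamperti time change of the spectrally negative $3/2$-stable L\'evy process, so that after the conditioning and time-reversal defining $\mathcal{L}$ in \cite{CLGHull}, the time-changed $S^+$ is exactly $\mathcal{L}_{t/\mathsf{h}_{\two}}$. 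For the volume coordinate, the time change is a pure reparametrization: it preserves the jump set so that the jumps of $\mathcal{L}$ are in bijection with those of $S^+$, and the independent multiplicative factors $\xi_i$ coming from the Boltzmann fillings of the enclosed finite regions are carried over unchanged from $Z$ to $\mathcal{M}$.

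The hard part is Proposition \ref{prop:scalinglayers}: one must show that the number of peeling-by-layers steps per layer rescales to an explicit Lamperti clock built from $S^+$ with constant $\mathsf{h}_{\two}$, and establish this jointly with Theorem \ref{thm:scalingpeeling}. A secondary technical point is checking that the time-change/inversion operation is continuous on the relevant Skorokhod space at the limiting clock, which relies on its almost-sure strict monotonicity; once these inputs are in hand, the theorem follows from the continuous-mapping theorem together with the Lamperti representation of the $\psi$-CSBP.
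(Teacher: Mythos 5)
Your proposal is correct and follows essentially the same route as the paper: specialize to the peeling-by-layers algorithm, use Proposition~\ref{prop:scalinglayers} (which the paper proves via Proposition~\ref{prop:A->1/3}) to obtain the joint Skorokhod convergence of $(P,V,H)$ with the Lamperti clock $\mathsf{h}_{\two}\int_0^t \mathrm{d}u/S^+_u$ as the third component, pass to an almost-sure version via Skorokhod representation, invert $n\mapsto H_n$ (the paper works with $\sigma_r=\min\{n:H_n\geq r\}$ and the exact identity $B^\bullet_r=\mathsf{T}_{\sigma_r}$, rather than your sandwich-plus-error argument, but the two are interchangeable here), and identify the time-changed pair $(S^+_{\eta_t},Z_{\eta_t})$ with $(\mathcal{L}_t,\mathcal{M}_t)$ via the Lamperti transformation combined with the time-reversal description of $\mathcal{L}$ from \cite{CLGHull}. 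No gap; this matches Section~\ref{sec:lamperti}.
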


A scaling argument shows that the limiting process has the same distribution as
$$\Big(\frac{\mathsf{p}_{\two}}{(\mathsf{h}_\two)^2}\, \mathcal{L}_{t}, 
\frac{\mathsf{v}_{\two}}{( \mathsf{h}_{\two})^4}\, \mathcal{M}_{t}\Big)_{t\geq 0}$$
but the form given in Theorem \ref{thm:scalinghull} helps to understand
the connection with Theorem \ref{thm:scalingpeeling}. 

We note that the convergence in distribution of the variables $r^{-2}| \partial B^\bullet_{r}(T_\infty)|$
as $r\to\infty$
had already been obtained 
by Krikun \cite[Theorem 1.4]{Kri04} via a different approach. 
The limiting process in Theorem \ref{thm:scalinghull} appeared
in the companion paper \cite{CLGHull} as the process describing the evolution of the
boundary length and the volume of hulls in the Brownian plane
(in the setting of the Brownian plane, the length of the boundary has to
be defined in a generalized sense). The paper
\cite{CLGHull}
contains detailed information about distributional properties of this 
limiting process (see Proposition 1.2 and Theorem 1.4
in \cite{CLGHull}). In particular, for every fixed $s>0$, the joint distribution of
the pair $(\mathcal{L}_s,\mathcal{M}_s)$ is known explicitly. Here we mention
only the Laplace transform of the marginal laws:
 \begin{eqnarray*} \mathbb{E}[e^{-\lambda \mathcal{L}_s}] &=& (1 + \frac{\lambda s^2}{4})^{-3/2},\\
\mathbb{E}[e^{-\lambda\mathcal{M}_s}] &=& 3^{3/2}\,\cosh\Big(\frac{(2\lambda)^{1/4}s}{
\sqrt{8/3}}\Big)
\,\Big(\cosh^2\Big(\frac{(2\lambda)^{1/4}s}{\sqrt{8/3}}\Big) + 2\Big)^{-3/2} .  \end{eqnarray*}
Note in particular that $\mathcal{L}_r$ follows a Gamma distribution 
with parameter $3/2$.

\paragraph{Metric exploration of the dual map.} Consider now the 
dual map $T^*_{\infty}$ of the UIPT, whose vertices are the faces of the UIPT, and
where two vertices
are connected by an edge if the corresponding faces of the UIPT share a common
edge. The origin of $T_{\infty}^*$, or root face of $T_{\infty}$, is the face incident to the right side of the root edge of $T_{\infty}$. We 
equip $T^*_{\infty}$ with the dual graph distance, and we let $B_{r}^{\bullet, *}(T_{\infty})$ denote the hull of the ball of radius $r$ in $T^*_{\infty}$, i.e.~the map made of all the faces of $T_{\infty}$ which are at dual graph distance less than or equal to $r$ from the root face, together with the finite regions these faces may enclose. Then the techniques developed for the proof of Theorem \ref{thm:scalinghull} also give the following result.

\begin{theorem}[Scaling limit of the hull process on the dual map]
 \label{thm:scalinghulldual} We have the following convergence in distribution in the sense of Skorokhod,
 \begin{eqnarray*} \left( n^{-2}| \partial B^{\bullet,*}_{[nt]}(T_\infty)|, n^{-4}|  B^{\bullet,*}_{[nt]}(T_\infty)|\right)_{t \geq 0} & \xrightarrow[n\to\infty]{(d)} & \Big(\mathsf{p}_{\two}\cdot \mathcal{L}_{t/ \mathsf{h}^*_{\two}}, \mathsf{v}_{\two} \cdot \mathcal{M}_{t/ \mathsf{h}^*_{\two}}\Big)_{t\geq 0},  \end{eqnarray*}
 where $ \mathsf{h}^*_{\two} = \mathsf{h}_{\two} + \big(  \mathsf{p}_{\two}\big)^{-1}$.
\end{theorem}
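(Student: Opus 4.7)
The proof would follow the same blueprint as for Theorem \ref{thm:scalinghull}, with the primal metric replaced by the dual metric throughout. The plan is as follows.

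\textbf{Step 1: Define a dual peeling by layers.} I would first introduce a Markovian peeling process $(\mathsf{T}^{*,\ell}_n)_{n\geq 0}$ tailored to a breadth-first exploration in the dual graph. At each step, among all boundary edges of $\mathsf{T}^{*,\ell}_n$, we peel (breaking ties by a deterministic rule) an edge $e$ such that the unique undiscovered face incident to $e$ attains the smallest possible dual graph distance from the root face, among all undiscovered faces adjacent to $\partial\mathsf{T}^{*,\ell}_n$. Since this quantity can be read off from $\mathsf{T}^{*,\ell}_n$ alone, the exploration is genuinely Markovian, and Theorem \ref{thm:scalingpeeling} applies.

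\textbf{Step 2: Sandwich between dual hulls.} Let $H^*_n$ be the largest dual distance from the root face attained by a face of $\mathsf{T}^{*,\ell}_n$. By construction, one has the inclusions
\[ B^{\bullet,*}_{H^*_n-1}(T_\infty)\;\subset\;\mathsf{T}^{*,\ell}_n\;\subset\;B^{\bullet,*}_{H^*_n+1}(T_\infty), \]
so that, up to a negligible correction on the boundary, $\mathsf{T}^{*,\ell}_n$ agrees with a dual hull of radius $H^*_n$. Consequently, the announced scaling limits for $(|\partial B^{\bullet,*}_{[nt]}|, |B^{\bullet,*}_{[nt]}|)$ follow from the scaling limits for $(|\partial \mathsf{T}^{*,\ell}_k|, |\mathsf{T}^{*,\ell}_k|)$, after inverting the (random) time change $k\mapsto H^*_k$.

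\textbf{Step 3: A law of large numbers for $H^*_n$.} The crucial technical step, analogous to Proposition \ref{prop:scalinglayers} in the primal case, is to prove a weak law of large numbers of the form $H^*_n / n^{1/3} \to c^*$ in probability, where the constant $c^*$ encodes the coefficient $\mathsf{h}^*_\two = \mathsf{h}_\two + (\mathsf{p}_\two)^{-1}$ after rescaling. The strategy is a renewal-type decomposition: one splits the peeling time into successive periods during which $H^*_n$ stays constant, then advances by one. Using the explicit peeling transition probabilities, the expected length of such a period and its conditional concentration, given the current boundary length, can be computed. The extra term $(\mathsf{p}_\two)^{-1}$ compared to the primal case reflects the additional peeling steps required in the dual exploration to peel through the entire current primal boundary (of length of order $\mathsf{p}_\two n^2$) before the next dual layer is fully discovered, whereas the term $\mathsf{h}_\two$ accounts for the ``interior filling'' phase, which is distributed essentially as in the primal peeling by layers.

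\textbf{Step 4: Conclusion.} Combining Theorem \ref{thm:scalingpeeling} applied to $(\mathsf{T}^{*,\ell}_n)$ with the asymptotics of $H^*_n$ from Step 3 and the sandwich of Step 2, one derives the desired Skorokhod convergence in exactly the same manner as Theorem \ref{thm:scalinghull} is derived from Theorem \ref{thm:scalingpeeling}. The main obstacle is Step 3: isolating the ``boundary-sweep'' contribution in the dual BFS and showing, through a uniform renewal estimate, that it produces precisely the additional $(\mathsf{p}_\two)^{-1}$ factor, independently of the interior-filling contribution responsible for $\mathsf{h}_\two$.
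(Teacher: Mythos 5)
Your overall blueprint is right: design a dual peeling-by-layers algorithm, apply Theorem~\ref{thm:scalingpeeling} for the joint scaling limit of $(P_n,V_n)$, control the dual radius process $H^*_n$, and finish by the same Lamperti-type time change used to deduce Theorem~\ref{thm:scalinghull} from Proposition~\ref{prop:scalinglayers}. This matches the paper's strategy. However, there is a genuine gap in Step~3.

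You claim a law of large numbers $H^*_n/n^{1/3}\to c^*$ in probability. This is false: as in the primal case (Proposition~\ref{prop:scalinglayers}), the correct statement is a convergence in distribution of $H^*_{[nt]}/(\mathsf{h}^*_\two n^{1/3})$ to the \emph{random} quantity $\int_0^t \mathrm{d}u/S^+_u$, jointly with the convergence of $(P,V)$; there is no deterministic scaling limit for $H^*_n$ alone. Moreover, the ``renewal-type decomposition'' you propose, splitting time into periods on which $H^*_n$ is constant, does not lead to a renewal theorem, because the successive inter-layer durations $\sigma^*_{r+1}-\sigma^*_r$ are of order $P_{\sigma^*_r}$, which is random of order $n^{2/3}$ and fluctuates wildly; they are neither independent nor identically distributed even asymptotically. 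The decisive ingredient you are missing is the auxiliary edge-counting process: let $\mathscr{L}^*$ be the set of primal edges belonging to $\partial B^{\bullet,*}_r$ for some $r$, and let $A^*_n$ be the number of such edges in $\mathsf{T}_n\setminus\partial\mathsf{T}_n$. It is $A^*_n$ (not $H^*_n$) that satisfies a genuine law of large numbers, $A^*_n/n\to \mathsf{a}_\two+1$ in probability, which is the analogue of Proposition~\ref{prop:A->1/3}. Since $A^*_{\sigma^*_{r+1}}-A^*_{\sigma^*_r}$ is essentially $P_{\sigma^*_r}$, one then recovers the scaling of $H^*_n$ by the discretisation argument of Section~\ref{sec:dist-layers}.

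Your heuristic for the extra term $(\mathsf{p}_\two)^{-1}$ is also off the mark (and if read naively suggests the layers advance more slowly in the dual, whereas $\mathsf{h}^*_\two>\mathsf{h}_\two$ means they advance \emph{faster} in peeling time). The correct, clean explanation is that in the dual exploration, the edge being peeled at every step is itself always an edge of $\mathscr{L}^*$: every boundary edge of the discovered region separates a face already at some dual distance $r$ from an undiscovered face, hence lies on some $\partial B^{\bullet,*}_r$. This contributes a deterministic $+1$ per step to $\Delta A^*_n$, on top of the $\mathsf{a}_\two$ coming, as in the primal case, from the $\mathsf{R}_k$-events. Hence $A^*_n/n\to \mathsf{a}_\two+1$ and $\mathsf{h}^*_\two=(1+\mathsf{a}_\two)/\mathsf{p}_\two=\mathsf{h}_\two+(\mathsf{p}_\two)^{-1}$. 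You would also need to verify the dual analogue of Lemma~\ref{ideal-envir}: that for most times, the boundary contains many edges incident to faces at dual distance $H^*_n$ and many at distance $H^*_n+1$ — this requires writing out the Markov chain $(P_n,G^*_n,H^*_n)$ and its transition kernel, as the paper does in~\eqref{transi-dual}.
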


\paragraph{First-passage percolation.} Consider again the dual map $T_{\infty}^*$ of the UIPT. We assign independently to each edge of the dual map an exponential
weight with parameter $1$. For every $t\geq 0$,  
we write $ \mathsf{F}_{t}$ for the union of all faces that may be reached from the root face by a (dual) path 
whose total weight is at most $t$. As usual, $ { \mathsf{F}}^\bullet_{t}$ stands for the hull of $ \mathsf{F}_{t}$, which is obtained by filling in the finite holes of $ \mathsf{F}_{t}$ inside $ T_{\infty}$. Then $  \mathsf{F}^\bullet_{t}$ is a triangulation with a simple boundary. If $ 0 = \tau_{0} < \tau_{1} < \cdots < \tau_{n} \ldots$ are the jump times of the process $t\mapsto  \mathsf{ F}^\bullet_t$, it
is not hard to verify that the sequence $ (\mathsf{F}^\bullet_{\tau_{n}})_{n\geq 0}$ is a \emph{uniform} peeling process, meaning that at each step
the edge to be peeled off is chosen uniformly at random among
all edges of the boundary. See Proposition  \ref{prop:fpppeeling} for a precise
statement. Then Theorem \ref{thm:scalingpeeling} leads to the following result:

\begin{figure}[!h]
 \begin{center}
 \includegraphics[height=6cm]{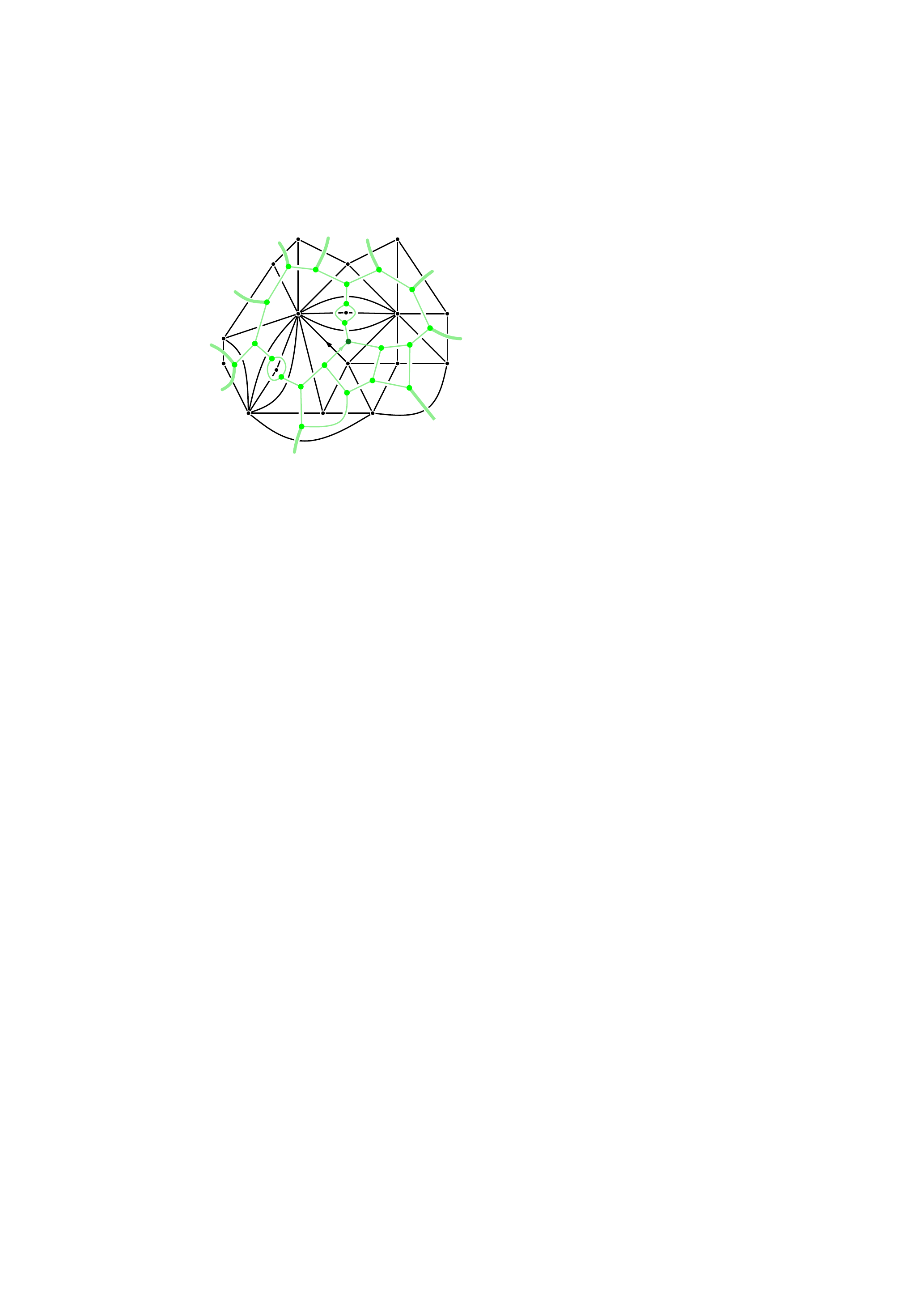}
 \caption{Illustration of the exploration along first-passage percolation on the dual of the UIPT. We represented $ \mathsf{F}^\bullet_{t}$ for some value of $t>0$. By standard properties of exponential variables, the next dual edge to be explored is uniformly distributed on the boundary.}
 \end{center}
 \end{figure}

\begin{theorem}[Scaling limits for first passage percolation]  \label{thm:scalingfpp} We have the following convergence in distribution for the Skorokhod topology 
 \begin{eqnarray*} \left(  n^{-2}|\partial \mathsf{F}^\bullet_{[nt]}|, n^{-4}| \mathsf{F}^\bullet_{[nt]}|\right)_{t \geq 0}  &\xrightarrow[n\to\infty]{(d)}&  \Big(\mathsf{p}_\two\cdot\mathcal{L}_{\mathsf{p}_\two  t   }, \mathsf{v}_\two\cdot\mathcal{M}_{\mathsf{p}_\two t }\Big)_{t \geq 0}.  \end{eqnarray*}
\end{theorem}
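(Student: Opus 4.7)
The plan is to combine Proposition~\ref{prop:fpppeeling}, which identifies the jump chain $(\mathsf{F}^\bullet_{\tau_n})_{n \ge 0}$ with a uniform peeling process of $T_\infty$, with a sharp control of the random time change $n \mapsto \tau_n$. By the memoryless property of exponential variables, the successive FPP jump times satisfy
\[
\tau_{n+1} - \tau_n = \frac{E_n}{P_n}, \qquad P_n := |\partial \mathsf{F}^\bullet_{\tau_n}|,
\]
where $(E_n)_{n\ge 0}$ is an i.i.d.\ Exp$(1)$ sequence independent of the peeling. Heuristically $P_n \asymp n^{2/3}$, so $\tau_n \asymp \sum_{k \le n} 1/P_k \asymp n^{1/3}$; this dictates the scaling where $n$ peeling steps are replaced by $n^3$ peeling steps at physical time of order $n$. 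Applying Theorem~\ref{thm:scalingpeeling} to the uniform peeling with $n$ replaced by $n^3$ yields
\[
\Big(\frac{P_{[n^3 s]}}{\mathsf{p}_\two n^2},\ \frac{|\mathsf{F}^\bullet_{\tau_{[n^3 s]}}|}{\mathsf{v}_\two n^4}\Big)_{s \ge 0}\ \xrightarrow[n\to\infty]{(d)}\ (S^+_s, Z_s)_{s \ge 0}.
\]

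The main new input is a conditional Riemann-sum / law-of-large-numbers argument extending the above to a joint convergence that also includes the time change:
\[
\Big(\frac{P_{[n^3 s]}}{\mathsf{p}_\two n^2},\ \frac{|\mathsf{F}^\bullet_{\tau_{[n^3 s]}}|}{\mathsf{v}_\two n^4},\ \frac{\tau_{[n^3 s]}}{n}\Big)_{s \ge 0}\ \xrightarrow[n\to\infty]{(d)}\ \big(S^+_s, Z_s, T(s)\big)_{s \ge 0},
\]
with $T(s) := \frac{1}{\mathsf{p}_\two}\int_0^s dr/S^+_r$; the limiting integral is a.s.\ finite because $S^+_r$ grows like $r^{2/3}$ near $0$. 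Once this is in hand, Skorokhod's representation allows us to work almost surely. Since $S^+ > 0$ on $(0, \infty)$, the map $T$ is a continuous strictly increasing bijection of $\mathbb{R}_+$, so the counting process $N_t := \sup\{n : \tau_n \le t\}$ satisfies $N_{[nt]}/n^3 \to T^{-1}(t)$ uniformly on compacts. Composing with the previously established convergence of $(P_{[n^3 \cdot]}, |\mathsf{F}^\bullet_{\tau_{[n^3\cdot]}}|)$ yields
\[
\Big(\frac{|\partial \mathsf{F}^\bullet_{[nt]}|}{n^2},\ \frac{|\mathsf{F}^\bullet_{[nt]}|}{n^4}\Big)_{t \ge 0}\ \xrightarrow[n\to\infty]{(d)}\ \big(\mathsf{p}_\two\, S^+_{T^{-1}(t)},\ \mathsf{v}_\two\, Z_{T^{-1}(t)}\big)_{t \ge 0}.
\]

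It remains to identify the right-hand side with $(\mathsf{p}_\two \mathcal{L}_{\mathsf{p}_\two t}, \mathsf{v}_\two \mathcal{M}_{\mathsf{p}_\two t})$. This is a Lamperti-type time change between the $3/2$-stable process conditioned to stay positive and the reversed $\psi$-CSBP, essentially contained in the construction of $\mathcal{L}$ and $\mathcal{M}$ in \cite{CLGHull}: setting $A(s) = \int_0^s dr/S^+_r$ and $\sigma = A^{-1}$, one has $(S^+_{\sigma(u)}, Z_{\sigma(u)})_{u \ge 0} \stackrel{(d)}{=} (\mathcal{L}_u, \mathcal{M}_u)_{u \ge 0}$ (the $\xi_i$-marks attached to jumps are preserved by the time change), and plugging in $\sigma(u) = T^{-1}(u/\mathsf{p}_\two)$ produces exactly the constants announced in the theorem. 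The main obstacle I anticipate is the Riemann-sum step: uniformly controlling $\sum_k E_k/P_k$ near $s=0$, where $P_k$ is small and exponential fluctuations are amplified, requires truncating on $[0, \varepsilon]$, applying a conditional LLN on $[\varepsilon, s]$ given the peeling, and separately bounding the truncated piece by combining tail estimates for $P_k^{-1}$ uniform in $n$ with the integrability of $1/S^+_r$ near $0$.
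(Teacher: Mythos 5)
Your proposal follows essentially the same route as the paper: identify the FPP jump chain as the uniform peeling (Proposition \ref{prop:fpppeeling}), prove the joint convergence of perimeter, volume and the clock $\tau_{[nt]}$ (the paper's \eqref{eq:goalfpp}) using a conditional-variance Riemann-sum argument together with Lemma \ref{lem:1/P} to kill the small-time contribution, and then conclude with the Lamperti time-change identity \eqref{ident-Lamperti} exactly as in Section \ref{sec:lamperti}. Your substitution $n\mapsto n^{3}$ is just a cosmetic reparametrization of \eqref{eq:goalfpp}, and the obstacle you flag near $s=0$ is precisely what the paper resolves via the bound $\E[\tau_{[ns]}]\le C(ns)^{1/3}$ from Lemma \ref{lem:1/P}.
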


Set $c_{1} =  \mathsf{h}^*_{\two}/ \mathsf{h}_{\two} = 4$ and $c_{2}= (\mathsf{p}_\two \mathsf{h}_\two)^{-1}= 3$. If we compare Theorem \ref{thm:scalinghull}, Theorem \ref{thm:scalinghulldual} and Theorem \ref{thm:scalingfpp},
we see that the scaling limits of the volume and
the perimeter are the same for  ${B}^\bullet_{r}(T_\infty)$, for ${B}^{\bullet,*}_{ c_{1} \cdot r}(T_\infty)$ and for $\mathsf{F}^\bullet_{c_{2}\cdot r}$. This is consistent with the conjecture 
saying
that balls for  the dual graph distance or for first-passage percolation distance grow like deterministic balls,
up to a constant multiplicative factor (this property is not expected to hold for
deterministic lattices such as $\mathbb{Z}^2$, but in some sense the UIPT is more isotropic).
Informally, writing $\mathrm{d_{gr}}$ for the graph distance (on the UIPT), $ \mathrm{d}_{ \mathrm{gr}}^*$ 
 for the dual graph distance and $\mathrm{d_{fpp}}$ for the first-passage percolation distance, our results
suggest that in large scales,
$$  \mathrm{d}_{ \mathrm{gr}}^*(\cdot,\cdot)  \approx  {c_{1}} \cdot \mathrm{d_{gr}}(\cdot, \cdot)\qquad \mathrm{d_{fpp}}(\cdot,\cdot)  \approx  {c_{2}} \cdot \mathrm{d_{gr}}(\cdot, \cdot).$$
Note that $ \mathrm{d_{gr}}$ is a metric on the UIPT, whereas $ \mathrm{d_{fpp}}$ or $ \mathrm{d}_{ \mathrm{gr}}^*$ are metrics on the dual graph. Still it is easy to restate the previous
display in the form of a precise conjecture (see Section \ref{rem:fpp}). This conjecture is
consistent with the recent calculations of Ambj\o rn and Budd \cite{AB14} for two and three-point functions in first-passage percolation on random triangulations, and is the subject of the forthcoming work \cite{CLGmodif}.

We finally note that our uniform peeling process can be viewed as a variant of
the classical Eden model on the (dual graph of the) UIPT. The same variant 
has been considered by Miller and Sheffield \cite{MS13}
and served as a motivation for the construction of Quantum Loewner Evolutions.
In fact the process  QLE$(\frac{8}{3},0)$ that is constructed in \cite{MS13}
is a continuum analog of the Eden model on the UIPT. See Section 2.2 in
\cite{MS13} for more details. 

The organization of the paper follows the preceding presentation. In Section 2,
we recall some enumeration results for triangulations that play an important
role in the paper, and we also give a result connecting the UIPT with Boltzmann
triangulations, which is of independent interest (Theorem \ref{theo:martingale}).
This result shows that the distributions of the ball of radius $r$ in the UIPT and in a
Boltzmann triangulation are linked by an absolute continuity relation 
involving a martingale, which has an explicit expression in terms of the
sizes of the cycles bounding the connected components of the complement
of the ball. 

\smallskip
\noindent{\bf Acknowledgement.} We thank Timothy Budd 
for interesting discussions and for providing Figure \ref{fig:Budd}. We also thank two anonymous referees for several
useful comments.

\tableofcontents	

\section{Preliminaries}

Throughout this work, we consider only {\em rooted} planar maps, and we 
often omit the word rooted. We view planar maps as graphs drawn on the sphere,
with the usual identification modulo orientation-preserving homeomorphisms. 
Recall that, except in Section 6 below,  we restrict our attention to type II triangulations, meaning that there are no loops, but multiple 
edges are allowed. We define a triangulation with a boundary as a rooted
planar map without loops, 
with a distinguished face (the external face)  bounded by a simple cycle
(called the boundary), such that all faces except possibly the distinguished one are triangles.
 If $\tau$ is a triangulation with a boundary, we denote its boundary  by $ \partial \tau$. Vertices of $\tau$ not on the boundary are called inner vertices. 
The size $|\tau|$ of $\tau$ is defined as the number of inner vertices of $\tau$.
The length $ |\partial  \tau|$ of 
$ \partial \tau$ (or perimeter of $\tau$) is the number of edges, or equivalently the number of vertices, in $ \partial \tau$. Note that
$|\partial \tau | \geq 2$ since loops are not allowed.

\subsection{Enumeration}
\label{sec:enumer}

We gather here several results about the asymptotic enumeration of planar triangulations, see \cite{Ang03,AS03} and the references therein.
For every $n\geq 0$ and $p \geq2$, we let $ \mathcal{T}_{n,p}$ denote the set of all (type II) triangulations of size $n$ with a simple boundary  of length $p$, that are rooted at an edge of the boundary 
oriented so that the external face lies on the right of the root edge. We have
\begin{equation}
\label{eq:tnpexact}
   \# \mathcal{T}_{n,p} = \frac{2^{n+1}(2p-3)!(2p+3n-4)!}{(p-2)!^2n!(2p+2n-2)!} \underset{n\to\infty}{\sim} C(p)\big(\frac{27}{2}\big)^n n^{-5/2}
   \end{equation}
where 
   \begin{equation} C(p) = \frac{ 4}{ 3^{7/2}\sqrt{\pi}}  \frac{  (2p-3)!}{(p-2)!^2} \left(\frac{9}{4}\right)^p \underset{p\to \infty}{\sim}   \frac{1}{54 \pi \sqrt{3}} 9^p \sqrt{p}.\label{equivalentcp} 
   \end{equation}
   The exact formula for $ \# \mathcal{T}_{n,p} $ in \eqref{eq:tnpexact} gives $\# \mathcal{T}_{n,p}=1$  for $n=0$ and $p=2$. This
   formula is valid provided we make
  the special 
   convention that the rooted planar map consisting of a single (oriented) edge between two vertices is 
   viewed as a triangulation with a simple boundary of length $2$: This will be called the trivial triangulation. It will be used in
   the sequel as the starting point of the peeling process, and also sometimes to ``fill in'' holes of size two arising in this process. 
   
The exponent $5/2$ in \eqref{eq:tnpexact} is typical of the enumeration of planar maps and shows that 
$$Z(p):=\sum_{n=0}^\infty \Big(\frac{2}{27}\Big)^n\, \#  \mathcal{T}_{n,p}<\infty.$$ 
The numbers $Z(p)$ can be computed 
exactly (see \cite[Proposition 1.7]{Ang03}): for every $p\geq 2$,
  \begin{equation}
  \label{eq:defzp}
  Z(p) \;=\; \frac{(2p-4)!}{(p-2)!p!}\left( \frac{9}{4}\right)^{p-1}. \end{equation}
  
 Triangulations in $ \mathcal{T}_{n,p}$, for some $n\geq 0$,  are also called triangulations of the $p$-gon.
By definition, the (critical) Boltzmann distribution on triangulations of the $p$-gon is the probability
  measure on $\bigcup_{n\geq 0} \mathcal{T}_{n,p}$ that assigns mass
  $(2/27)^{n} Z({p})^{-1}$ to each triangulation of $ \mathcal{T}_{n,p}$.
  This is also called the free distribution in \cite{AS03}. It follows from \eqref{eq:defzp} that for every $x \in [0,1/9]$,
  $$ \sum_{p=1}^\infty Z(p+1) \,x^p =  \frac{1}{2} + \frac{(1-9x)^{3/2}-1}{27x}.$$
From \eqref{eq:defzp}  and the last display, we get that 
\begin{eqnarray} Z(p+1) \!\!&\underset{p\to \infty}{\sim}& \!\!     \mathsf{t}_{ \two} \cdot  9^p p^{-5/2}, \quad \mbox{ where }\  \mathsf{t}_{\two} = \frac{1}{4 \sqrt{\pi}} ,\label{eq:asympZp}\\ 
\sum_{p=1}^\infty Z(p+1)\,9^{-p}  \!\! \!\!&=& \!\!  \!\!\frac{1}{6} \label{eq:sumZp}\;,\\
\sum_{p=1}^\infty p\, Z(p+1)\,9^{-p} \!\!  \!\!&=& \!\!  \!\! \frac{1}{3} \label{eq:sumkZp}\;.  \end{eqnarray}

Finally, we note that there is a bijection between rooted triangulations of the $2$-gon having $n$ inner vertices and rooted plane triangulations having $n+2$ vertices: Just glue together the two boundary edges of a triangulation of the $2$-gon to get a
triangulation of the sphere. The Boltzmann distribution on rooted triangulations of the $2$-gon thus induces a 
probability measure on the space of all  triangulations of the sphere (including the trivial one). A random triangulation
distributed according to this probability measure is called a Boltzmann triangulation of the sphere. 
Equivalently, the law of a Boltzmann triangulation of the sphere assigns a mass 
 $(2/27)^{n-2} Z(2)^{-1}$ to every triangulation of the sphere with $n$ vertices (including the trivial
triangulation for which $n=2$). 

\subsection{Boltzmann triangulations and the UIPT}

In this section, we describe a relation between Boltzmann triangulations of the sphere and the UIPT. This relation is 
not really needed in what follows but it helps to understand the importance of Boltzmann triangulations in the subsequent developments. 

Let $T_{\rm Bol}$ be a Boltzmann triangulation of the sphere. As 
in the introduction above, for every integer $r\geq 1$, let
$B_{r}(T_{\rm Bol})$ denotes the ball of radius $r$ in $T_{\rm Bol}$. 
So $B_{r}(T_{\rm Bol})$ is the rooted planar map obtained by
keeping only those faces of $T_{\rm Bol}$ that are incident to at least
one vertex at distance at most $r-1$ from the root vertex.
We view $B_{r}(T_{\rm Bol})$ as a random variable with values in the
space of all (type II) triangulations with holes. Here, a triangulation
with holes is a planar map without loops, with a finite number of distinguished faces called the holes, 
such that all faces except possibly the holes are triangles, the boundary of every hole is a simple cycle, whose length is called the
size of the hole, and two
distinct holes cannot share a common edge
(the triangulations with a simple boundary that we considered above are just triangulations with a single hole). 
In the case of $B_{r}(T_{\rm Bol})$, holes obviously correspond to the connected components of the complement
of the ball, in a way analogous to the middle part
of Fig.~\ref{fig:boules}.
We write $\ell_{1}(r), \ell_{2}(r), \ldots,
\ell_{n_r}(r)$ for the sizes of the holes of  $B_{r}(T_{\rm Bol})$ enumerated in 
nonincreasing order. We also write $ \mathcal{F}_{r}$ for
the $\sigma$-field  generated by $B_{r}(T_{\rm Bol})$ and we let $ \mathcal{F}_{0}$ be the trivial $\sigma$-field.
Recall our notation $B_r(T_\infty)$ for the ball of radius $r$ in the UIPT, which is also viewed as
a random triangulation with holes. 

\begin{theorem} \label{theo:martingale} Let $f(n) =  \frac{n}{2}\cdot (n-1)\cdot (2n-3)$ for every integer $n\geq 3$ and $f(2)=9$. The random process $(\mathbf{M}_{r})_{r\geq 0}$
defined by
$$  \mathbf{M}_{r}:=\sum_{i=1}^{n_r} f( \ell_{i}(r))\,,\quad \mbox{ for } r \geq 1\,,$$
and $\mathbf{M}_0=1$, 
is a martingale with respect to the filtration $( \mathcal{F}_{r})_{r\geq 0}$.
Moreover, if $F$ is any nonnegative measurable function on the
space of triangulations with holes, we have, for every $r\geq 1$,
\begin{equation}
\label{absocont}
 \mathbb{E}[F(B_{r}(T_\infty))] = \mathbb{E}[\mathbf{M}_r\,F(
B_{r}( {T_{\rm Bol}}))].
\end{equation}
\end{theorem}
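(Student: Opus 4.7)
The martingale property and the absolute continuity relation are essentially equivalent: granted \eqref{absocont}, the martingale property follows by the tower property. For any bounded measurable function $G$ of $B_r(\cdot)$, applying \eqref{absocont} at level $r+1$ with test function $F=G\circ\pi$ (where $\pi$ is the natural projection $B_{r+1}\mapsto B_r$) and then at level $r$ with test function $G$ gives
$$\mathbb{E}[\mathbf{M}_{r+1}G(B_r(T_{\rm Bol}))]=\mathbb{E}[G(B_r(T_\infty))]=\mathbb{E}[\mathbf{M}_rG(B_r(T_{\rm Bol}))],$$
so $\mathbb{E}[\mathbf{M}_{r+1}\mid\mathcal{F}_r]=\mathbf{M}_r$; taking $F\equiv 1$ in \eqref{absocont} also yields $\mathbb{E}[\mathbf{M}_r]=1$. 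The task is thus reduced to proving \eqref{absocont}.

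To prove \eqref{absocont}, I would fix a triangulation with holes $B$ having $v$ vertices and hole sizes $\ell_1,\dots,\ell_{n_r}$, and compute both probabilities. Call a triangulation of the $\ell_i$-gon an \emph{admissible} filling of hole $i$ if completing $B$ by inserting it (along with admissible fillings elsewhere) produces a triangulation whose ball of radius $r$ is exactly $B$; write $\mathcal{T}^\star_{m,\ell_i}\subset\mathcal{T}_{m,\ell_i}$ for such fillings with $m$ inner vertices and $Z^\star(\ell_i)=\sum_m(2/27)^m|\mathcal{T}^\star_{m,\ell_i}|$. Since the Boltzmann weight factorises over inner vertices,
$$\mathbb{P}(B_r(T_{\rm Bol})=B)=\frac{(2/27)^{v-2}}{Z(2)}\prod_iZ^\star(\ell_i).$$
For the UIPT I would use the local-limit characterisation $T_\infty=\lim_nT_n$ with $T_n$ uniform on $\mathcal{T}_{n-2,2}$. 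Combining the asymptotic $|\mathcal{T}_{m,\ell}|\sim C(\ell)(27/2)^mm^{-5/2}$ from \eqref{eq:tnpexact} with a parallel asymptotic $|\mathcal{T}^\star_{m,\ell}|\sim C^\star(\ell)(27/2)^m m^{-5/2}$ (which should hold because admissibility is controlled by a bounded neighbourhood of $\partial h$), the count $\sum_{m_1+\cdots=n-v}\prod_i|\mathcal{T}^\star_{m_i,\ell_i}|$ of $T\in\mathcal{T}_{n-2,2}$ with $B_r(T)=B$ is dominated as $n\to\infty$ by configurations where one $m_j$ grows linearly with $n$ and the others stay bounded, yielding
$$\mathbb{P}(B_r(T_\infty)=B)=\frac{(2/27)^{v-2}}{C(2)}\sum_jC^\star(\ell_j)\prod_{i\neq j}Z^\star(\ell_i).$$

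Dividing the two expressions cancels the common factor $(2/27)^{v-2}\prod_iZ^\star(\ell_i)$ and leaves
$$\frac{\mathbb{P}(B_r(T_\infty)=B)}{\mathbb{P}(B_r(T_{\rm Bol})=B)}=\frac{Z(2)}{C(2)}\sum_j\frac{C^\star(\ell_j)}{Z^\star(\ell_j)},$$
so it remains to match each summand with $f(\ell_j)$. A direct calculation from \eqref{eq:tnpexact} and \eqref{eq:defzp} yields $\frac{Z(2)}{C(2)}\cdot\frac{C(\ell)}{Z(\ell)}=\ell(\ell-1)(2\ell-3)/2$, which equals $f(\ell)$ for $\ell\geq 3$; the identity $C^\star(\ell)/Z^\star(\ell)=C(\ell)/Z(\ell)$ in that range would be proved by decomposing an admissible filling into a bounded boundary ring (which determines admissibility) and an unconstrained polygonal bulk whose inner-vertex asymptotics contribute $C$ and $Z$ in the same proportion irrespective of the ring. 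The anomalous value $f(2)=9$ follows from the corresponding identity $C^\star(2)/Z^\star(2)=9\,C(2)/Z(2)$: a size-$2$ hole is degenerate in that its two parallel boundary edges, once identified by any ``infinite'' filling, collapse to a single edge, turning the filling into a rooted triangulation of the whole sphere rather than a half-planar one, and the attendant change of normalisation accounts for the extra factor $9$. The principal technical obstacle is exactly these cancellation identities: although geometrically natural because admissibility only constrains a bounded neighbourhood of $\partial h$, a rigorous proof requires either an explicit boundary/bulk bijection for fillings or a local-limit statement for Boltzmann triangulations of the $\ell$-gon that is uniform in $m$ and compatible with the global constraint $B_r=B$.
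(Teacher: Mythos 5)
Your reduction of the martingale property to \eqref{absocont} is exactly the paper's first step, and your general strategy---compute $\mathbb{P}(B_r(T_{\rm Bol})=\mathbf{t})$ and $\mathbb{P}(B_r(T_\infty)=\mathbf{t})$ separately by summing over hole fillings, then take the ratio---is also the paper's. But you introduce a complication that does not exist and then, having introduced it, you cannot discharge it. You posit a notion of ``admissible'' fillings $\mathcal{T}^\star_{m,\ell}\subsetneq\mathcal{T}_{m,\ell}$ and corresponding constants $Z^\star(\ell)$, $C^\star(\ell)$, and your argument ultimately rests on unproved cancellation identities $C^\star(\ell)/Z^\star(\ell)=C(\ell)/Z(\ell)$ (for $\ell\geq 3$) and $C^\star(2)/Z^\star(2)=9\,C(2)/Z(2)$, which you flag yourself as the ``principal technical obstacle'' and for which you offer only a heuristic.

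The point you are missing is that admissibility is a property of $\mathbf{t}$ alone, not of the fillings. Once $\mathbf{t}$ satisfies the necessary conditions---every vertex on a hole boundary is at graph distance exactly $r$ from the root, and every non-hole face is incident to a vertex at distance at most $r-1$---\emph{every} filling of the holes produces a triangulation whose radius-$r$ ball is exactly $\mathbf{t}$, since the hole boundary cycles separate the fillings from the root and no filling can create shorter paths to vertices of $\mathbf{t}$ or new faces incident to vertices at distance $\leq r-1$. The only exception is purely combinatorial, not metric: a hole of perimeter $2$ cannot be filled with the trivial triangulation, because that operation identifies the two boundary edges rather than gluing in a $2$-gon. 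Thus $\mathcal{T}^\star_{m,\ell}=\mathcal{T}_{m,\ell}$ for all $\ell\geq 3$, and for $\ell=2$ one only removes the single trivial filling, giving $Z'(2)=Z(2)-1$ while $Z'(\ell)=Z(\ell)$ otherwise (the paper's $Z'$). Your identities then hold trivially with $C^\star=C$ and $Z^\star=Z'$, and the value $f(2)=\frac{Z(2)}{C(2)}\cdot\frac{C(2)}{Z'(2)}=\frac{Z(2)}{Z(2)-1}=\frac{9/8}{1/8}=9$ falls out directly; your speculative explanation of $f(2)$ via ``infinite fillings collapsing'' is not what is going on. Without the observation that admissibility reduces to the size-$2$ degeneracy, your proof is incomplete; with it, it becomes the paper's proof.
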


The second part of the 
theorem shows that the law of a ball in the UIPT can be obtained by biasing
the law of the corresponding ball in a Boltzmann triangulation using
the martingale $\mathbf{M}_r$. This is an analog of a classical result
for Galton--Watson trees: In order to get the first 
$k$ generations of a Galton--Watson tree conditioned 
on non-extinction, one  biases the law of the first $k$ generations
of an unconditioned Galton--Watson tree using a martingale which is
simply the size of generation $k$ of the tree (see e.g. \cite[Chapter 12]{LP10}).
In a sense, the UIPT can thus be viewed as a Boltzmann triangulation
conditioned to be infinite. This is related to the discussion in Section 6 of \cite{AS03},
which associates with a Boltzmann triangulation a multitype Galton--Watson tree 
describing the structure of balls, in such a way that the tree associated with the UIPT 
is just the same Galton--Watson tree conditioned on non-extinction. 

\proof It suffices to prove the second part of the theorem. Indeed, if \eqref{absocont} holds, 
we immediately get, for every $1\leq k\leq \ell$, and every function $F$,
$$ \mathbb{E}[\mathbf{M}_\ell\,F(
B_{k}( {T_{\rm Bol}}))]=  \mathbb{E}[\mathbf{M}_k\,F(
B_{k}( {T_{\rm Bol}}))],$$
and it follows that $\mathbb{E}[\mathbf{M}_\ell\mid  \mathcal{F}_{k}] = \mathbf{M}_k$.

In order to verify the second assertion of the theorem, we will provide explicit
formulas for the probability that the ball of radius $r$ in $T_{\rm Bol}$, 
resp. in $T_\infty$, is equal to a given triangulation with holes. Let $\mathbf{t}$ be a 
fixed triangulation with holes. Note that $\mathbb{P}(B_{r}( {T_{\rm Bol}})=\mathbf{t})>0$ if and only if all vertices belonging to the
boundaries of the holes of $\mathbf{t}$ are at distance $r$ from the root vertex, and all faces of $\mathbf{t}$ other than the holes are incident
to (at least) one vertex at distance at most $r-1$ from the root vertex. Furthermore,
the preceding conditions are also necessary for
$\mathbb{P}(B_{r}( {T_{\infty}})=\mathbf{t})$ to be positive. 

Write $n$ for the total number
of vertices of $\mathbf{t}$, $m\geq 0$ for the number of holes of $\mathbf{t}$ and $p_1,\ldots,p_m$
for the respective sizes of the holes of $\mathbf{t}$ -- the holes are enumerated in
some deterministic manner given $\mathbf{t}$. Then, for every integer $q\geq n$, the number
of triangulations with $q$ vertices whose ball of radius $r$
coincides with $\mathbf{t}$ is equal to
$$\sum_{n_1+\cdots+n_m=q-n} \Big( \prod_{j=1}^m \#\mathcal{T}_{n_j,p_j}\Big),$$
where the sum is over all choices of the nonnegative integers $n_1,\ldots,n_m$ such that
${n_1+\cdots+n_m=q-n}$, with the additional constraint that $n_i>0$ if $p_i=2$. The reason for this last constraint if
the fact that a hole of size $2$ cannot be filled by the trivial triangulation, because this would mean that
we glue the two edges of the boundary. Note that when there is no hole
($m=0$) the quantity in the last display should be interpreted as equal to $1$
if $q=n$ and to $0$ otherwise. The total Boltzmann weight
of those triangulations whose ball of radius $r$
coincides with $\mathbf{t}$ is then
$$\sum_{q=n}^\infty \Big(\frac{2}{27}\Big)^{q-2} Z(2)^{-1} \sum_{n_1+\cdots+n_m=q-n} \Big( \prod_{j=1}^m \#\mathcal{T}_{n_j,p_j}\Big),$$
where we impose the same constraint as before on the integers $n_1,\ldots,n_m$ in the sum. 
We set $Z'(p)=Z(p)$ if $p>2$ and $Z'(2)=Z(2)-1$. 
The quantity in the last display equals 
$$
\Big(\frac{2}{27}\Big)^{n-2} Z(2)^{-1}\;\sum_{n_1=\mathbf{1}_{\{p_1=2\}}}^\infty 
\ldots \sum_{n_m=\mathbf{1}_{\{p_m=2\}}}^\infty
\prod_{j=1}^m \Big( \Big(\frac{2}{27}\Big)^{n_j}\,\#\mathcal{T}_{n_j,p_j}\Big)
= \Big(\frac{2}{27}\Big)^{n-2} Z(2)^{-1}\;\prod_{j=1}^m Z'(p_j),
$$
and so we have proved that 
\begin{equation}
\label{lawball-Bol}
\mathbb{P}(B_{r}( {T_{\rm Bol}})=\mathbf{t}) = \Big(\frac{2}{27}\Big)^{n-2} Z(2)^{-1}\;\prod_{j=1}^m Z'(p_j).
\end{equation}

Next consider the UIPT $T_\infty$. We can similarly compute $\mathbb{P}(B_{r}( T_{\infty})=\mathbf{t})$, using the fact that $T_\infty$ is the local limit of triangulations with a large size.
If, for every integer $q\geq 3$, $T_{(q)}$ denotes a 
uniformly distributed plane triangulation with $q$ vertices, we have 
$$\mathbb{P}(B_{r}( T_{\infty})=\mathbf{t}) = \lim_{q\to\infty}
\mathbb{P}(B_{r}( T_{(q)})=\mathbf{t}) .$$
Recalling that the number of rooted plane triangulations (of type II) with $q$ vertices is $\#\mathcal{T}_{q-2,2}$
the same counting argument as above gives for $q\geq n$,
$$\mathbb{P}(B_{r}( T_{(q)})=\mathbf{t}) 
= \Big( \#\mathcal{T}_{q-2,2}\Big)^{-1} \sum_{n_1+\cdots+n_m=q-n}
\Bigg(\prod_{j=1}^m \#\mathcal{T}_{n_j,p_j}\Bigg),$$
where the sum is again over nonnegative integers $n_1,\ldots,n_m$ such that
${n_1+\cdots+n_m=q-n}$, with the same additional constraint that $n_i>0$ if $p_i=2$.
From the asymptotics in \eqref{eq:tnpexact}, it is an easy matter to verify that, for
any $\ve>0$, we can choose $K$ sufficiently large so that the 
asymptotic contribution of terms corresponding to
choices of $n_1,\ldots,n_m$ where $n_i\geq K$ for two distinct values of $i\in\{1,\ldots,m\}$
is bounded above by $\ve$ (compare with \cite[Lemma 2.5]{AS03}). Thanks to this observation, we get from the asymptotics
 \eqref{eq:tnpexact} that
 $$\mathbb{P}(B_{r}( T_{\infty})=\mathbf{t})= \Big(\frac{2}{27}\Big)^{n-2} C(2)^{-1}
 \sum_{j=1}^m C(p_j) \sum_{n_1,\ldots,n_{j-1},n_{j+1},\ldots,n_m} \Bigg(\,\underset{i\not = j}{\prod_{i=1}^m} 
\Big(\frac{2}{27}\Big)^{n_i} \#\mathcal{T}_{n_i,p_i}\Bigg),$$
 where the second sum is over all choices of $n_1,\ldots,n_{j-1},n_{j+1},\ldots,n_m\geq 0$ such that
  $n_i>0$ if $p_i=2$. It follows that
\begin{equation}
\label{lawball-UIPT}
\mathbb{P}(B_{r}( T_{\infty})=\mathbf{t})=
   \Big(\frac{2}{27}\Big)^{n-2} C(2)^{-1}  \sum_{j=1}^m C(p_j)
 \Bigg(  \;\underset{i\not = j}{\prod_{i=1}^m} \,Z'(p_i)\Bigg).
 \end{equation}
 Comparing \eqref{lawball-UIPT} with \eqref{lawball-Bol}, we get
$$\mathbb{P}(B_{r}( {T_\infty})=\mathbf{t}) = \Big(\frac{Z(2)}{C(2)}\,\sum_{j=1}^m \frac{C({p_j})}{Z'(p_j)}\Big)\,\mathbb{P}(B_{r}( {T_{\rm Bol}})=\mathbf{t}).$$
Note that, for every integer $p\geq 2$, 
$$\frac{Z(2)}{C(2)}\,\frac{C(p)}{Z'(p)}=f(p),$$
and so we have obtained 
$\mathbb{P}(B_{r}( {T_\infty})=\mathbf{t}) =g(\mathbf{t})\,\mathbb{P}(B_{r}( {T_{\rm Bol}})=\mathbf{t})$, where $g(\mathbf{t}):= \sum_{j=1}^m f(p_j)$. Formula 
\eqref{absocont} now follows since $\mathbf{M}_r=g(B_{r}( {T_{\rm Bol}}))$ by definition.
\endproof

\begin{remark} Formula \eqref{lawball-UIPT} is obviously related to Proposition 4.10 in \cite{AS03}. 
We did not use directly that result because it is apparently restricted to type III triangulations (the formula of Proposition 4.10 in \cite{AS03}
does not seem to take into account the possibility of holes of size $2$). 
\end{remark}

\section{Asymptotics for a general peeling process}

\subsection{Peeling}

\label{sec:peeling}

The peeling process is an algorithmic procedure that ``discovers''  the UIPT step by step. We give a brief presentation of this algorithm and refer to \cite{Ang05,Ang03,ACpercopeel,BCsubdiffusive} for details. \medskip

 Formally, the algorithm produces a nested sequence of rooted triangulations with a simple boundary $ \mathsf{T}_{0}\subset  \mathsf{T}_{1} \subset \ldots \subset  \mathsf{T}_{n} \subset \ldots \subset T_{\infty}$, such that, for every $i \geq 0$, conditionally on $\mathsf{T}_{i}$, the remaining part $T_{\infty}\backslash  \mathsf{T}_{i}$  has the same distribution as a UIPT of the $| \partial  \mathsf{T}_{i}|$-gon  (see \cite[Section 1.2.2]{Ang03} for the definition
 of the UIPT of the $p$-gon). 
 
Assuming that we are given the UIPT $T_\infty$, the sequence $ \mathsf{T}_{0}, \mathsf{T}_{1},\ldots$ is  constructed inductively as follows. 
 First $\mathsf{T}_0$ is the trivial triangulation. Then, for every $n \geq 0$, conditionally on $ \mathsf{T}_{n}$ we pick
 an edge $e_{n}$ on $ \partial  \mathsf{T}_{n}$,  \emph{either deterministically (i.e. as a deterministic function of $\mathsf{T}_n$) or via a randomized algorithm} 
 that  may involve only random quantities independent of $T_\infty$. The 
 triangulation $\mathsf{T}_{n+1}$ is then obtained by adding to $\mathsf{T}_n$ the triangle incident to $e_n$ which 
 was not contained in $\mathsf{T}_n$ (this is called the revealed triangle) 
 and the bounded region that may be enclosed in the union of $\mathsf{T}_n$ and the revealed triangle. 
 We sometimes say that  $\mathsf{T}_{n+1}$ is obtained from $\mathsf{T}_{n}$ by peeling the edge $e_n$. 
 Notice that, at the first step, there is only one (oriented) edge in the boundary of $\mathsf{T}_0$, but we can choose 
to reveal the triangle on the right or on the left of this oriented edge. 
 
 The point is the fact that the distribution of the whole sequence $ \mathsf{T}_{0}, \mathsf{T}_{1},\ldots$ can be described 
 in a simple way and provides a construction of $T_\infty$ (although this is not obvious, we shall see later that
 $T_\infty$ is the limit of the finite triangulations $\mathsf{T}_n$). Remarkably, the description of the law of   
 $ \mathsf{T}_{0}, \mathsf{T}_{1},\ldots$ is essentially the same independently of the (deterministic or randomized) algorithm that we
 use to choose the peeled edge at step $n$. 
 
In order to describe the conditional law of $\mathsf{T}_{n+1}$ given $\mathsf{T}_n$ and the peeled edge $e_n$, we need
to distinguish several cases.
Suppose that at step $n \geq 0$ the triangulation $ \mathsf{T}_{n}$ has a boundary of length $p$. The revealed triangle at time $n$ may be of several different types (see Fig.\,\ref{cases}): 
\begin{enumerate}
 \item  \textbf{Type $ \mathsf{C}$:} The revealed triangle has a vertex in the ``unknown region''. This occurs with probability 
 \begin{equation}
 \label{probatypeC}
 \mathbb{P}( \mathsf{C} \mid | \partial  \mathsf{T}_{n}| = p)= q_{-1}^{(p)} =  \frac{2}{27}\frac{C(p+1)}{C(p)}.
 \end{equation}
 \item   \textbf{Types $ \mathsf{L}_{k}$ and $ \mathsf{R}_{k}$:} The three vertices of the revealed triangle lie on the boundary of $ \mathsf{T}_{n}$. This triangle thus ``swallows'' a piece of the boundary of $ \partial \mathsf{T}_{n}$ of length $k \in \{1, \ldots , p-2\}$. These events are denoted by $ \mathsf{R}_{k}$ or $ \mathsf{L}_{k}$, depending on whether the edge of the revealed triangle that comes after the peeled edge in clockwise order
 is incident or not to the
 infinite part of the triangulation (see Fig.\,\ref{cases}). These events have a probability   equal to
 \begin{equation}
 \label{probatypeR} \mathbb{P}(  \mathsf{L}_{k} \mid | \partial \mathsf{T}_{n}| = p) = \mathbb{P}(  \mathsf{R}_{k} \mid | \partial \mathsf{T}_{n}| = p)   := q^{(p)}_{k} = Z(k+1)\frac{C(p-k)}{C(p)}.  
  \end{equation}

\begin{figure}[h]
\begin{center}
\includegraphics[width=14cm]{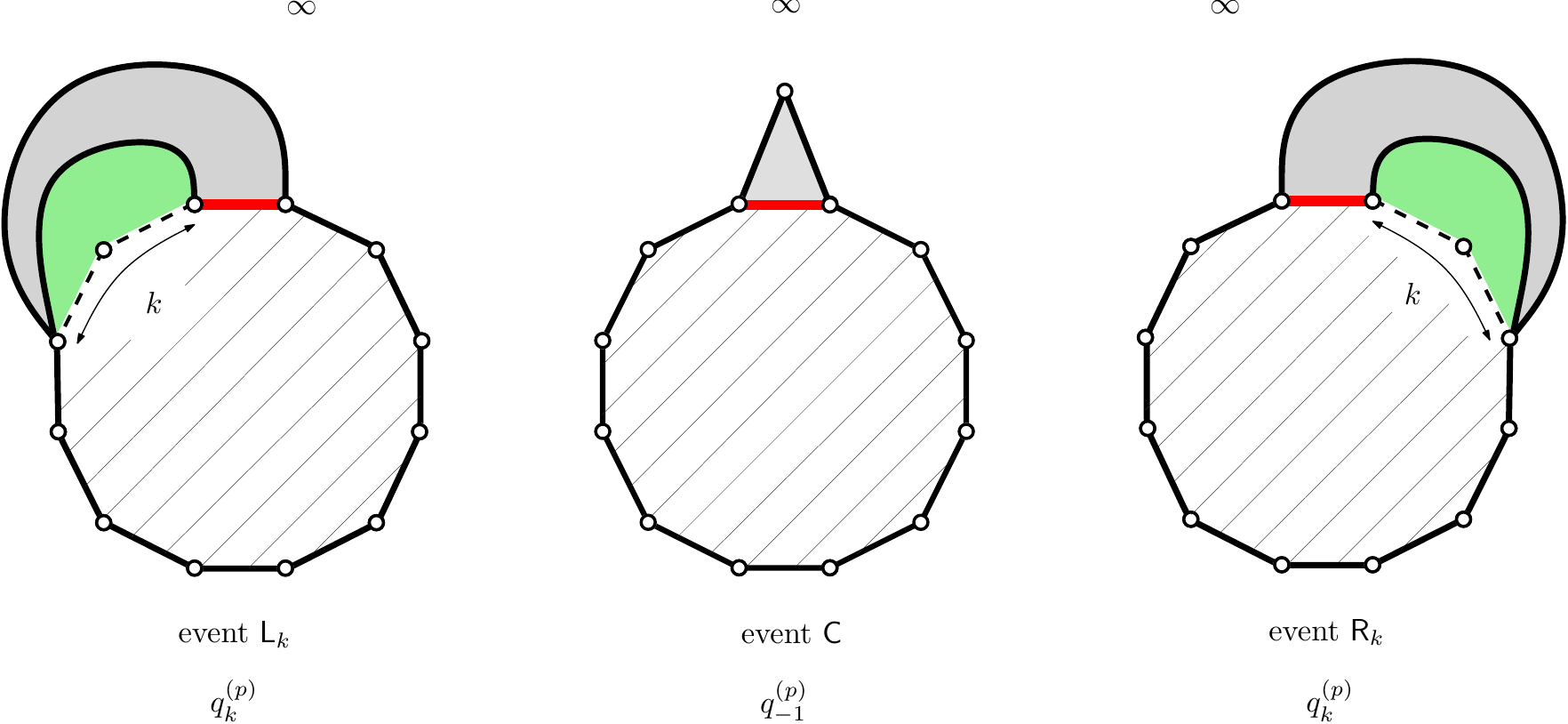}
\end{center}
\caption{ \label{cases}Illustration of cases $ \mathsf{C}$, $ \mathsf{L}_{k}$, and $ \mathsf{R}_{k}$. %Below are the degenerate cases $ \mathsf{R}_{0}$ and  the three cases that could happen when we reveal a new quadrangle in the unknown region. Notice that two of the points on the boundary may be confounded.
}
\end{figure}
\end{enumerate}

In cases $ \mathsf{R}_{k}$ and $ \mathsf{L}_{k}$, we also need to specify the distribution of the triangulation with a boundary of length $k+1$ that
is enclosed in the union of $\mathsf{T}_k$ and the revealed triangle.  If by convention 
we root this triangulation at the unique edge of its boundary incident to the revealed triangle,
we specify its distribution by saying that it is a Boltzmann triangulation of the $(k+1)$-gon. Note that when $k=1$, there is a positive probability that this Boltzmann triangulation is the trivial one, 
and this simply
means that the enclosed region is empty, or equivalently that the revealed triangle has two edges on the boundary of $\mathsf{T}_n$. 

The preceding considerations completely describe the distribution of the sequence $ \mathsf{T}_{0}, \mathsf{T}_{1},\ldots$ -- modulo
of course the deterministic or randomized algorithm that is used at every step to select the peeled edge. The choices 
of types $ \mathsf{C}$, $ \mathsf{L}_{k}$, and $ \mathsf{R}_{k}$, and of the Boltzmann triangulations that are used (whenever needed)
to ``fill in the holes'' are made independently at every step with the probabilities given above. 

At this point, we note that the geometry of the random triangulations $ \mathsf{T}_{n}$ depends on the peeling algorithm used to choose the peeled edge at every step. On the other hand, it should be clear from the previous description that the law of the process $( | \mathsf{T}_{n}|, |\partial  \mathsf{T}_{n}|)_{n \geq 0}$ does not depend on this algorithm. In the present section, we will be interested only in this process,
and for this reason we do not need to specify the peeling algorithm. Later, in Section 3 and 4, we will consider 
particular choices of the peeling algorithm, which are useful to investigate various properties of the UIPT. 

 To simplify notation, we set, for every $n\geq 0$, 
 $$P_{n} = | \partial  \mathsf{T}_{n}| \quad  \mbox{and} \quad  V_{n} = | \mathsf{T}_{n}|.$$ 

In the remaining part of this section, we will prove Theorem \ref{thm:scalingpeeling} describing the scaling limit of the process $(P_{n},V_{n})_{n \geq 0}$ (see  \cite{Ang03} and \cite[Theorem 5]{BCsubdiffusive} in the quadrangular case for related statements). We will
also establish a few consequences of Theorem \ref{thm:scalingpeeling}, which are  of independent interest.

\subsection{The scaling limit of perimeters}

\label{sec:perimeter}
The description of the previous section shows that both processes $(P_n)_{n\geq 0}$ and $(P_n,V_n)_{n\geq 0}$ are 
Markov chains. The Markov chain
$(P_n)_{n\geq0}$ starts from $P_{0}=2$ and takes values in $\{2,3,\ldots\}$. Its transition probabilities are given by 
 \begin{eqnarray} \label{def:Pn}  \mathbb{E}[f(P_{n+1}) \mid P_{n}] = f(P_{n}+1) \cdot q^{(P_{n})}_{-1} + 2 \sum_{k=1}^{p-2} f(P_{n}-k) \cdot q_{k}^{(P_{n})}.  \end{eqnarray}

Using \eqref{equivalentcp}, we may  set $q_{-1} = \lim_{p \to \infty} q_{-1}^{(p)} = \frac{2}{3}$ and similarly $q_{k} = \lim_{p \to \infty} q_{k}^{(p)} = Z(k+1)9^{-k}$ for every $k \geq 1$. From \eqref{eq:sumZp} and \eqref{eq:sumkZp}, it is an easy matter to
verify that 
$$ q_{-1} + 2 \sum_{k \geq 1}q_{k} =1 \qquad \mbox{ and }\qquad q_{-1} - 2 \sum_{k \geq 1} k \, q_{k} =0,$$
so that the probability measure $\nu$ on $\Z$ given by $\nu(1)=q_{-1}$ and $\nu(-k)=2q_k$ for every $k\geq 1$ is  centered
(note that $\nu$ is supported on $\{\ldots,-3,-2,-1,1\}$).  
In fact, the weights $q_i$ describe the law of the one-step peeling in the half-plane version of the UIPT, see \cite{Ang05,ACpercopeel}.
 
 We  write $(W_{n})_{n\geq 0}$ for a random walk with values in $ \mathbb{Z}$, started from $W_0=2$ and with jump distribution
$\nu$. Notice that the jumps of $W$ are bounded above by $1$. Furthermore, using \eqref{eq:asympZp} we have for every $n\geq 0$,
 \begin{eqnarray} \label{eq:tailX} \nu(-k)= 2q_{k} \underset{k\to\infty}{\sim}   2\, \mathsf{t}_{\two} k^{-5/2}.  \end{eqnarray}

It follows that $\nu$ is in the domain of attraction of a spectrally negative stable law of index $3/2$. This implies the convergence in distribution in the Skorokhod  sense, 
 \begin{eqnarray} \label{eq:convstable} \left( \frac{W_{[nt]}}{  \mathsf{p}_{\two} \cdot n^{2/3}} \right)_{t \geq 0}  \xrightarrow[n\to\infty]{(d)} (S_{t})_{t \geq 0}, \end{eqnarray} 
 where 
 \begin{eqnarray} \label{def:pfromt} \mathsf{p}_{\two}= \left( \frac{8 \mathsf{t}_{\two} \sqrt{\pi}}{3} \right)^{2/3}= (2/3)^{2/3},  \end{eqnarray}
and  $S$ is the stable L\'evy process with index $3/2$ and no positive jumps, 
 whose distribution is determined by the Laplace transform $\E[\exp(\lambda S_t)]= \exp(t\lambda^{3/2})$ for every $t,\lambda\geq 0$.
Note that the Lévy measure of $S$ is $\frac{3}{4 \sqrt{\pi}} |x|^{-5/2}\,\mathbf{1}_{\{x < 0\}}\, \mathrm{d}x $.

Our first objective is to get a scaling limit analogous to \eqref{eq:convstable} for $(P_n)_{n\geq 0}$. To this end, recall
from \cite[Section VII.3]{Ber96} that one can define a process $(S^+_t)_{t\geq 0}$ with c\`adl\`ag sample paths, which is
distributed as $(S_t)_{t\geq 0}$ ``conditioned to stay positive forever''.
The scaling limit in the following result was suggested in \cite{Ang03} before Lemma 3.1.
To simplify notation we write $\llbracket k,\infty \llbracket=\{k,k+1,k+2,\ldots\}$ and $\rrbracket -\infty,k\rrbracket=\{\ldots,k-2,k-1,k\}$
for every integer $k\in\mathbb{Z}$.

\begin{proposition} \label{prop:positive} {\rm (i)} The Markov chain $(P_{n})_{n \geq 0}$ is distributed as the random walk $(W_{n})_{n \geq 0}$ 
conditioned not to hit $\rrbracket -\infty,1\rrbracket$. Equivalently, $(P_n)_{n\geq 0}$ is distributed as the $h$-transform
of the  random walk $(W_{n})_{n \geq 0}$ killed upon hitting $\rrbracket -\infty,1\rrbracket$, where the function $h$
defined on $\Z$ by
  \begin{equation} \label{def:h}
  h(p) := 
  \left\{\begin{array}{ll}9^{-p}C(p) \quad&\hbox{if }p\geq 2,\\
  0&\hbox{if }p\leq 1,
  \end{array}
  \right.
  \end{equation}
is, up to multiplication by a positive constant, the unique nontrivial nonnegative function that is $\nu$-harmonic on $\llbracket 2,\infty \llbracket$
  and vanishes on $\rrbracket -\infty,1\rrbracket$.
  
  \noindent{\rm(ii)} The following convergence in distribution holds in the Skorokhod sense,
\begin{equation}
\label{conv-peri}
\left( \frac{P_{[nt]}}{  \mathsf{p}_{\two} \cdot n^{2/3}} \right)_{t \geq 0}  \xrightarrow[n\to\infty]{(d)}  \left(S^+_{t}\right)_{t \geq 0}.\end{equation}
where  we recall that $ \mathsf{p}_{\two} = (2/3)^{2/3}$.
\end{proposition}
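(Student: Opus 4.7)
For (i), the plan is direct verification. First, I would check that $h(p)=9^{-p}C(p)$ is $\nu$-harmonic on $\llbracket 2,\infty\llbracket$ and vanishes on $\rrbracket-\infty,1\rrbracket$. The harmonicity equation at $p\geq 2$ reads
\[h(p) \;=\; \nu(1)\,h(p+1) \;+\; \sum_{k=1}^{p-2}\nu(-k)\,h(p-k),\]
and after substituting $\nu(1)=2/3$, $\nu(-k)=2Z(k+1)9^{-k}$ and multiplying through by $9^p$, this is \emph{equivalent} to $q_{-1}^{(p)}+2\sum_{k=1}^{p-2}q_k^{(p)}=1$, which holds automatically because these are the probabilities of the mutually exclusive one-step peeling outcomes. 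The same rearrangement shows that the Doob $h$-transformed transitions $\nu(p'-p)h(p')/h(p)$ agree with \eqref{def:Pn}: the $+1$ jump gives $(2/3)\cdot 9^{-1}C(p+1)/C(p)=q_{-1}^{(p)}$ and the $-k$ jump (for $1\leq k\leq p-2$) gives $2Z(k+1)9^{-k}\cdot 9^{k}C(p-k)/C(p)=2q_k^{(p)}$, matching the Markov chain $(P_n)$ exactly. For uniqueness, I would appeal to standard fluctuation theory for random walks conditioned to stay positive: because the positive jumps of $W$ are bounded by $+1$, the renewal function of the weak descending ladder height process provides, up to a positive multiplicative constant, the unique nontrivial nonnegative function that is $\nu$-harmonic on $\llbracket 2,\infty\llbracket$ and vanishes on $\rrbracket-\infty,1\rrbracket$ (see e.g.~\cite[Chapter VII]{Ber96} and the Bertoin--Doney construction of the walk conditioned to stay nonnegative); the explicit $h$ must therefore coincide with this renewal function up to a scalar.

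Given (i), the scaling limit in (ii) reduces to an invariance principle for a random walk conditioned to stay positive. After the harmless shift $p\mapsto p-1$, the chain $(P_n-1)$ is the Doob $h$-transform of $(W_n-1)$ killed upon reaching $\rrbracket-\infty,0\rrbracket$, that is, the walk $(W_n-1)$ conditioned to stay strictly positive forever. The tail estimate \eqref{eq:tailX} and the convergence \eqref{eq:convstable} place $\nu$ in the domain of normal attraction of the spectrally negative stable law of index $3/2$ with the correct normalizing sequence $\mathsf{p}_{\two}\,n^{2/3}$. I then invoke the Caravenna--Chaumont invariance principle (with earlier contributions of Bryn-Jones--Doney and Chaumont--Doney) for random walks conditioned to stay positive: under the same scaling that sends the unconditioned walk to $S$, the conditioned walk converges in the Skorokhod sense on $[0,\infty)$ to the Lévy process $S^+$ conditioned to stay positive in the sense of \cite[Chapter VII]{Ber96}. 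The shift by one unit vanishes after dividing by $n^{2/3}$, yielding \eqref{conv-peri}.

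The main technical obstacle is the convergence on the \emph{unbounded} time interval $[0,\infty)$, since several off-the-shelf invariance principles are stated on compact intervals or for excursions/bridges. Promoting the convergence to the half-line requires tightness, hence sharp control of $\mathbb{P}(W_1,\ldots,W_n\geq 2\mid W_0=2)$; classical Wiener--Hopf/Spitzer estimates give a decay of order $n^{-1/3}$, consistent with the asymptotic $h(p)\sim\mathrm{const}\cdot p^{1/2}$ as $p\to\infty$ arising from \eqref{equivalentcp}. This $1/2$-power also matches the power of the harmonic function of $S$ used in Bertoin's construction of $S^+$, so a slightly more self-contained route would be to pass $h$-transforms to the limit directly once the unconditioned convergence \eqref{eq:convstable} and the $h$-transform identification from (i) are in hand.
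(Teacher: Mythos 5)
Your proof follows essentially the same route as the paper's for the harmonicity verification, the Doob $h$-transform identification, and the invocation of the Caravenna--Chaumont invariance principle \cite{CC08} in part (ii). The one place you genuinely differ is the uniqueness argument in part (i). The paper's uniqueness proof is far more elementary than the one you propose: because $\nu$ is supported on $\{\ldots,-2,-1,1\}$, the harmonicity equation at $p\geq 2$ involves $h(p+1)$ with coefficient $\nu(1)=q_{-1}>0$ and otherwise only $h(i)$ for $2\leq i\leq p$ (recall $h$ vanishes at and below $1$); hence $h(p+1)$ is determined recursively from $h(2),\ldots,h(p)$, and an upward induction shows $h$ is unique up to the normalization of $h(2)$. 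You instead appeal to the general fluctuation-theoretic identification of the unique nonnegative harmonic function for the killed walk with the renewal function of the descending ladder heights. That result is valid for oscillating walks, but it is a nontrivial external theorem; moreover the reason you give for it --- ``because the positive jumps of $W$ are bounded by $+1$'' --- is somewhat misplaced, since the skip-free-to-the-right structure is precisely what powers the paper's elementary upward recursion, and is not what makes the renewal-function uniqueness statement hold. Both routes reach the same conclusion, but the paper's is more self-contained and exposes the mechanism directly. Your supplementary remarks about half-line convergence and the $\sqrt{p}$ asymptotics of $h$ in part (ii) are reasonable sanity checks, though the paper simply cites \cite{CC08} without further comment.
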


 \proof (i) Let $h$ be defined by \eqref{def:h}. From the explicit formulas \eqref{probatypeC}
 and \eqref{probatypeR}, one immediately gets that, for every $p\geq 2$ and every $k\in\{-1,1,2,\ldots,p-2\}$,
 \begin{equation}
 \label{eq:h-transform}
 q^{(p)}_k= \frac{h(p-k)}{h(p)}\;q_k.
 \end{equation}
 It then follows from \eqref{def:Pn} and the definition of $\nu$ that, for every $p\geq 2$ and $k\in \{-p+2,-p+3,\ldots,-1,1\}$,
  \begin{eqnarray} 
  \label{harmonicity}
  \mathbb{P}(P_{n+1}=p+k \mid P_{n}=p) = \frac{h(p+k)}{h(p)}\,\nu(k) = \frac{h(p+k)}{h(p)} \mathbb{P}(W_{n+1}=p+k \mid W_{n}=p).
  \end{eqnarray}
  By summing over $k$, we get, for every $p\geq 2$, 
  $$\sum_{k\in \Z} \frac{h(p+k)}{h(p)}\,\nu(k)= 1$$
  so that $h$ is $\nu$-harmonic on $\llbracket 2,\infty \llbracket$. Note that the uniqueness 
  (up to a multiplicative constant) of a positive function 
  that is $\nu$-harmonic on $\llbracket 2,\infty \llbracket$
  and vanishes on $\rrbracket -\infty,1\rrbracket$ is easy, since, for every $p\geq 2$, the value of this function at $p+1$ is determined from
its values for $2\leq i\leq p$. Furthermore, formula \eqref{harmonicity} precisely says that 
  $(P_n)_{n\geq 0}$ is distributed as the $h$-transform
of the  random walk $(W_{n})_{n \geq 0}$ killed upon hitting $\rrbracket -\infty,1\rrbracket$. The fact that this $h$-transform can be interpreted as the random walk $W$
conditioned to stay in $\llbracket 2,\infty \llbracket$ is classical, see e.g. \cite{BD94}.

\noindent{(ii)} This follows from the invariance principle proved in \cite{CC08}.
\endproof

From \eqref{equivalentcp}, we have 
\begin{equation}
h(p) \underset{p\to \infty}{\sim}   \frac{1}{54 \pi \sqrt{3}} \sqrt{p}.
\label{equivalent-h} 
\end{equation}
Still from \eqref{equivalentcp}, we can write, for $p\geq 2$, 
$$h(p)= \frac{1}{3^{7/2}4\sqrt{\pi}}\; \frac{(2p-3)\times(2p-5)\times\cdots\times 3\times 1}
{(2p-4)\times(2p-6)\times\cdots\times 4\times 2},$$
so that $h(p+1)/h(p)= (2p-1)/(2p-2)$, proving that $h$ is monotone increasing
on $\llbracket 2,\infty \llbracket$. 
Then, for every $j\geq 1$, and every $p$ with $p\geq j+2$,
\begin{equation}
\label{compar1}
q^{(p)}_j=\frac{h(p-j)}{h(p)}\,q_j \leq q_j
\end{equation}
and similarly, for every $p\geq 2$,
\begin{equation}
\label{compar2}
q^{(p)}_{-1}= \frac{h(p+1)}{h(p)}\,q_{-1} \geq q_{-1}.
\end{equation}
These bounds will be useful later.

 \subsection{A few applications}
Let us give a few applications of Proposition \ref{prop:positive}. First, it is easy to recover from this proposition the known fact (see \cite[Claim 3.3]{Ang03}) that the Markov chain
$(P_n)_{n\geq 0}$ is transient, 
 \begin{eqnarray} \label{eq:pn->infty} P_{n}  \xrightarrow[n\to\infty]{a.s.} +\infty.  \end{eqnarray}
 To see this, let $p\geq 2$ and write $\P_p$ for a probability measure under which the random walk $W$
 with jump distribution $\nu$ starts from $p$. For every $y\in\Z$, set 
 $T_y=\min\{n\geq 0: W_n=y\}$. Note that $T_y<\infty$ a.s. because the random
 walk $W$ is recurrent. Similarly, suppose that 
 $\wt T_y$ is distributed under $\P_p$ as the hitting time of $y$ for a
 Markov chain with the same transition kernel as $(P_n)_{n\geq 0}$ but started from $p$.
 Then, standard properties of $h$-transforms give for every $p,y\in\llbracket 2,\infty\llbracket$,
 $$\P_p(\wt T_y<\infty)=\frac{h(y)}{h(p)}\, \P_p(W_k\geq 2, \;\forall k\leq T_y).$$
 Since $h$ is monotone increasing on $\llbracket 2,\infty \llbracket$, the right-hand side is smaller than $1$ when $p>y$, 
 giving the desired transience. 
 
 The following corollary was conjectured in \cite[Section 5.1]{BCsubdiffusive}.
 
 \begin{corollary} \label{cor:bouffetout} Any peeling $( \mathsf{T}_{n})_{n \geq 0}$ of the UIPT will eventually discover $T_{\infty}$ entirely, that is 
 $$ \bigcup_{n \geq 0} \mathsf{T}_{n} = T_{\infty}, \quad a.s.$$ \end{corollary}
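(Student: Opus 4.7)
My plan is to reduce the statement to the claim that every fixed face $f$ of $T_\infty$ satisfies $f \in \mathsf{T}_n$ for some $n$, almost surely; since $T_\infty$ has countably many faces a.s., a countability argument then yields $\bigcup_n \mathsf{T}_n = T_\infty$ a.s. Fix a face $f$ and set $\tau_f := \inf\{n : f \in \mathsf{T}_n\}$. The goal is $\mathbb{P}(\tau_f < \infty) = 1$, which I would prove by contradiction.

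Assume $\mathbb{P}(\tau_f = \infty) > 0$. On $\{\tau_f > n\}$, the spatial Markov property says that, conditionally on $\mathsf{T}_n$ together with the algorithm's (independent) auxiliary randomness, the unexplored region $U_n := T_\infty \setminus \mathsf{T}_n$ is a UIPT of the $P_n$-gon containing $f$ as one of its faces. Let $p_n$ be the conditional probability that the peeling step at time $n+1$ absorbs $f$, i.e.\ either reveals $f$ as the new triangle or encloses $f$ in the finite Boltzmann region swallowed during a type $\mathsf{L}_k$ or $\mathsf{R}_k$ event. I would then show that $\sum_n p_n \mathbf{1}_{\{\tau_f > n\}} = +\infty$ almost surely on $\{\tau_f = \infty\}$, so that the conditional Borel-Cantelli lemma forces $\tau_f < \infty$ a.s., contradicting the assumption.

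The divergence should follow from a lower bound of the form $p_n \gtrsim 1/P_n$ (valid on $\{\tau_f > n\}$ once $P_n$ is large enough) combined with the polynomial growth $P_n = O(n^{2/3})$ coming from Proposition \ref{prop:positive}(ii) and \eqref{eq:pn->infty}, since $\sum_n n^{-2/3} = +\infty$. To obtain this lower bound I would use two ingredients: (i) the heavy-tailed step law \eqref{eq:tailX} together with the uniform bounds \eqref{compar1}--\eqref{compar2} on the $h$-transform factors, which ensure that with probability bounded below independently of $P_n$ the peeling step absorbs a region whose size is of linear order in $P_n$; and (ii) a spatial-invariance estimate, based on the $\mathsf{L}_k/\mathsf{R}_k$ symmetry and a comparison with the half-plane UIPT (whose step law is governed by the weights $q_k$, obtained as limits of $q^{(p)}_k$ by \eqref{compar1}--\eqref{compar2}), showing that $f$'s position relative to the peeled edge along the boundary cycle of $U_n$ carries mass of order $1/P_n$ on any fixed location.

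The main technical obstacle is this last invariance estimate. Because the algorithm may depend on $\mathsf{T}_n$, the position of $f$ with respect to $e_{n+1}$ in the UIPT of the $P_n$-gon is not literally uniform; one must argue that the residual spatial symmetry of the UIPT of the $P_n$-gon, together with the independence of the algorithm's choice from $U_n$, suffice to produce a lower bound of the correct order $1/P_n$ on the probability that $f$ is captured by a macroscopic jump. The rest of the argument is a standard application of the conditional Borel-Cantelli lemma.
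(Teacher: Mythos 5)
Your plan differs structurally from the paper's: you fix a face $f$ of $T_\infty$ and try to show directly, via conditional Borel--Cantelli, that $f$ is eventually swallowed; the paper instead fixes a \emph{vertex} $v\in\partial\mathsf{T}_{n_0}$, proves that $v$ a.s.\ eventually lies in the interior of some $\mathsf{T}_{n_1}$, and then deduces the corollary by an inductive argument that the distance from the root to the complement of $\mathsf{T}_n$ tends to infinity. This choice is not cosmetic. For a boundary vertex $v$, the event ``$v$ becomes interior at step $n+1$'' is captured purely by the jump $|\Delta P_n|$ being large enough and in the right half of the boundary; the paper lower-bounds this by $\sum_{k=[P_n/2]+1}^{[3P_n/4]}q_k^{(P_n)}\gtrsim P_n^{-3/2}$, a bound that follows directly from \eqref{eq:asympZp} and \eqref{equivalent-h} and does not need any ``spatial invariance.'' For a fixed face $f$ there is no analogue of this: $f$ need not be incident to the boundary of $U_n=T_\infty\setminus\mathsf{T}_n$ at all, and conditionally on $\{\tau_f>n\}$ it may sit at arbitrary dual depth inside $U_n$. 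When a type-$\mathsf{L}_k$/$\mathsf{R}_k$ step encloses a finite piece of $U_n$, whether it catches $f$ depends on where $f$ sits inside the swallowed region, and there is no uniform $\gtrsim 1/P_n$ bound for that event; indeed, if you condition on $\{\tau_f>n\}$ for large $n$ you are implicitly conditioning $f$ to be hard to reach. The ``spatial-invariance estimate'' you flag as the main obstacle is, in my view, the place where the argument as written breaks: you are trying to assign $f$ a well-defined position along the boundary cycle, and $f$ generically does not have one.

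A second gap, smaller but still real, is in the step from Proposition~\ref{prop:positive}(ii) to $\sum_n 1/P_n=\infty$ a.s. That proposition gives convergence in distribution of $(n^{-2/3}P_{[nt]})_t$ to $\mathsf{p}_\two S^+$; it does not give an almost-sure bound $P_n=O(n^{2/3})$, and in fact $P_n\le n+2$ deterministically is the only a.s.\ bound available at that level. The paper handles the exact same difficulty (for $\sum P_n^{-3/2}$) not by invoking an a.s.\ growth rate, but by arguing by contradiction: a tight upper bound on the partial sums would transfer, via the Skorokhod convergence, to $\int_1^\infty (S_u^+)^{-3/2}\,\mathrm{d}u<\infty$ with positive probability, which is excluded by Jeulin's lemma and the scaling property of $S^+$. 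You would need a comparable argument for $\sum 1/P_n$; ``polynomial growth from Proposition 3.1(ii)'' does not suffice as stated.

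To salvage your route you would essentially have to replace the target ``$f$ is swallowed'' with ``some boundary vertex separating $f$ from the root is swallowed,'' which is exactly the reduction the paper makes. As it stands, the proposal has a genuine gap at the key lower bound, and a secondary gap in converting the distributional scaling into divergence of the harmonic-type series.
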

 
 \proof It is enough to prove that, if $n_0\geq 1$ is fixed, then a.s. every vertex 
 of $\partial \mathsf{T}_{n_0}$ belongs to the interior of $\mathsf{T}_{n_1}$
 for some $n_1>n_0$ sufficiently large. Indeed, if this property holds,
 an inductive argument shows that the minimal distance between a vertex outside $\mathsf{T}_n$ and the root tends to infinity as $n\to\infty$, which gives the
 desired result.
 
 So let us fix $n_0$ and a vertex $v$ of $\partial \mathsf{T}_{n_0}$, and argue 
 conditionally on $\mathsf{T}_{n_0}$ and $v$. We note that, for every 
 $n\geq n_0$, conditionally on
 the event that $v$ is still on the boundary of $\mathsf{T}_{n}$, the probability
 that $v$ will be  ``surrounded'' by the revealed triangle at step $n+1$, and therefore
 will belong to the interior of $\mathsf{T}_{n+1}$, is at least
 $$ \sum_{k= [P_{n}/2]+1}^{P_{n}-2} q_{k}^{(P_{n})} $$
 with the convention that the sum is $0$ if $[P_{n}/2]+1>P_n-2$. If $P_n$ is
 large enough, the latter quantity is bounded below by
 $$\sum_{k= [P_{n}/2]+1}^{[3P_{n}/4]} q_{k}^{(P_{n})} =
 \sum_{k= [P_{n}/2]+1}^{[3P_{n}/4]}  \frac{h(P_{n}-k)}{h(P_{n})} q_{k}
 \geq c\, P_n^{-3/2},$$
 where $c$ is a positive constant and we used \eqref{eq:asympZp}  and 
 \eqref{equivalent-h} in the last inequality. Recalling that
 $P_n\to\infty$ a.s., we see that the proof will be complete
 if we can verify that the series
 $$\sum_{n=1}^\infty P_n^{-3/2}$$
 diverges a.s.
 
 To this end, we argue by contradiction and assume that we can find two constants
 $M<\infty$ and $\ve>0$ such that the probability of the event
 $$\Big\{\sum_{n=1}^\infty P_n^{-3/2}\leq M\Big\}$$
 is greater than $\ve$. On this event, for any $t >1$ and any $n\geq 1$, we have 
$$\int_1^t { \mathrm{d}u} \,\Big(\frac{P_{[nu]}}{n^{2/3}}\Big)^{-3/2}
\leq \frac{1}{n} \sum_{i=n}^{[nt]} \left( \frac{P_{i}}{n^{2/3}}\right)^{-3/2} = \sum_{i=n}^{[nt]} {P_{i}^{-3/2}} \leq M.$$
Using the convergence of Proposition \ref{prop:positive} (ii), we obtain that, for every $t >1$,  the probability of the event $\{\int_{1}^t { \mathrm{d}u}\,(S_{u}^+)^{-3/2} \leq (\mathsf{p}_{\two})^{-3/2} \,M \}$ is greater than $\ve$. Letting $t \to \infty$ we get that 
$$ \mathbb{P}\left( \int_{1}^\infty \frac{ \mathrm{d}u}{(S_{u}^+)^{3/2}} \leq (\mathsf{p}_{\two})^{-3/2} \,M \right) \geq \varepsilon.$$
This is a contradiction because
$$\int_{1}^\infty \frac{ \mathrm{d}u}{(S_{u}^+)^{3/2}}=\infty\;\quad\hbox{a.s.}$$
as can be seen by an application of Jeulin's lemma \cite[Proposition 4 c)]{Jeu82}, noting that
we have $(S_{u}^+)^{-3/2} \build{=}_{}^{(d)} u^{-1} (S^+_1)^{-3/2}$ 
by scaling and that the law of $S^+_1$ is diffuse, for instance by \cite[Corollary VII.16]{Ber96}. \endproof

The next lemma will be an important tool in the proof of Theorems \ref{thm:scalinghull} and \ref{thm:scalingfpp}.

\begin{lemma} \label{lem:1/P} There exist two constants $0<c_{1} <c_{2}<\infty$ such that, for all $n \geq 1$, we have
$$ c_{1} n^{-2/3} \leq \mathbb{E}\Big[\frac{1}{P_n}\Big] \leq c_{2} n^{-2/3}.$$
\end{lemma}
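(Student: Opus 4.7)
The plan is to exploit the $h$-transform representation from Proposition \ref{prop:positive}(i), which, applied to the function $f(p)=1/p$, gives the identity
\[
\mathbb{E}\!\left[\frac{1}{P_n}\right] \;=\; \frac{1}{h(2)}\,\mathbb{E}_2\!\left[\frac{h(W_n)}{W_n}\,\mathbf{1}_{\tau>n}\right],
\]
where $W$ is the random walk with step law $\nu$ started at $W_0=2$, and $\tau=\min\{k\geq 0 : W_k\leq 1\}$. The asymptotic \eqref{equivalent-h} together with the monotonicity of $h$ on $\llbracket 2,\infty\llbracket$ yields a uniform upper bound $h(k)\leq C\sqrt{k}$ for all $k\geq 2$, which will be the main analytic input.

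For the lower bound, I would appeal directly to the scaling limit in Proposition \ref{prop:positive}(ii). Since $P_n/n^{2/3}\Rightarrow \mathsf{p}_{\two} S_1^+$ and $S_1^+$ is almost surely finite, I pick $M$ so that $\mathbb{P}(\mathsf{p}_{\two}S_1^+<M)\geq 3/4$; the Portmanteau theorem then gives $\mathbb{P}(P_n\leq Mn^{2/3})\geq 1/2$ for every $n\geq n_0$, from which
\[
\mathbb{E}[1/P_n] \;\geq\; \frac{\mathbb{P}(P_n\leq Mn^{2/3})}{Mn^{2/3}} \;\geq\; \frac{1}{2Mn^{2/3}}.
\]
The finitely many indices $n<n_0$ are handled by the trivial bound $P_n\leq n+2$ (since the peeling increases the perimeter by at most one per step), after absorbing the discrepancy into $c_1$.

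For the upper bound, the pointwise estimate $h(W_n)/W_n\leq C/\sqrt{W_n}$ on $\{\tau>n\}$ reduces the task to proving
\[
\mathbb{E}_2\!\left[W_n^{-1/2}\mathbf{1}_{\tau>n}\right] \;=\; O(n^{-2/3}).
\]
I would split this expectation at the scale $\delta n^{2/3}$ for some fixed $\delta>0$. On the event $\{W_n>\delta n^{2/3}\}$, combining the pointwise bound $W_n^{-1/2}\leq \delta^{-1/2}n^{-1/3}$ with the persistence estimate $\mathbb{P}_2(\tau>n)\leq C n^{-1/3}$---standard for walks in the domain of attraction of a spectrally negative $3/2$-stable law of positivity parameter $\rho=2/3$, whose persistence exponent is $1-\rho=1/3$---produces an $O(\delta^{-1/2}n^{-2/3})$ contribution. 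On the event $\{W_n\leq \delta n^{2/3}\}$, the ballot-type local inequality $\mathbb{P}_2(W_n\leq K,\tau>n)\leq CK/n$, valid for $2\leq K\leq \delta n^{2/3}$, followed by an Abel summation against $k^{-1/2}$, yields another $O(\sqrt{\delta}\,n^{-2/3})$ term.

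The main obstacle is the sharp local estimate $\mathbb{P}_2(W_n\leq K,\tau>n)\leq CK/n$, which is genuinely stronger than what the Skorokhod convergence in Proposition \ref{prop:positive}(ii) delivers through Portmanteau alone (the latter supplies only a limsup statement, not a uniform bound in $n$). Rigorously it can be extracted from the local limit theorem for random walks conditioned to stay positive of Vatutin--Wachtel or Caravenna--Chaumont, whose hypotheses are met by our walk thanks to the tail \eqref{eq:tailX} and standard aperiodicity. A possible circumvention is to establish uniform integrability of $(n^{2/3}/P_n)^{1+\varepsilon}$ for some $\varepsilon>0$ and pass to the limit in $n^{2/3}\mathbb{E}[1/P_n]$ using Proposition \ref{prop:positive}(ii); the finiteness of $\mathbb{E}[1/S_1^+]$ then follows from Chaumont's explicit formulas for the density of $S_1^+$, which vanishes polynomially at the origin.
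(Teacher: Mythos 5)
Your lower bound argument is essentially the one in the paper: both exploit the convergence $P_n/n^{2/3}\Rightarrow \mathsf{p}_{\two}S^+_1$ via a truncation/Portmanteau step; the paper writes $\mathbb{E}[(n^{2/3}/P_n)\wedge 1]\to\mathbb{E}[(\mathsf{p}_{\two}S^+_1)^{-1}\wedge 1]>0$, which is the same estimate in slightly different clothing.

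For the upper bound you take a genuinely different route. You work with the $h$-transform identity $\mathbb{E}[1/P_n]=h(2)^{-1}\mathbb{E}_2[h(W_n)W_n^{-1}\mathbf{1}_{\tau>n}]$ and then split at scale $\delta n^{2/3}$, using (a) the persistence estimate $\mathbb{P}_2(\tau>n)=O(n^{-1/3})$ (correct, with positivity parameter $\rho=2/3$) and (b) a ballot-type bound $\mathbb{P}_2(W_n\leq K,\tau>n)\leq CK/n$, which you propose to import from the conditioned local limit theorems of Vatutin--Wachtel or Caravenna--Chaumont. This does close the argument, but the paper avoids any appeal to conditioned local limit theory by a more self-contained combinatorial route: it time-reverses the bridge event $\{W_n=k,\tau>n\}$ into the event that the reversed walk $\widetilde W$ (which has only positive jumps, of size $1$) first hits level $k-1$ at time $n+1$, then applies Kemperman's formula $\mathbb{P}(\widetilde T_{k-1}=n+1)=\frac{k-1}{n+1}\mathbb{P}(\widetilde W_{n+1}=k-1)$, and finally invokes only the \emph{unconditional} local limit theorem $\sup_k\mathbb{P}(\widetilde W_{n}=k)\leq c n^{-2/3}$. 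This yields the exact identity
\[
\mathbb{P}(P_n=k)=\tfrac{3}{2}\,\tfrac{h(k)}{h(2)}\,\tfrac{k-1}{n+1}\,\mathbb{P}(\widetilde W_{n+1}=k-1),
\]
hence the pointwise estimate $\mathbb{P}_2(W_n=k,\tau>n)\leq C\,k\,n^{-5/3}$. Incidentally, summing this over $k\leq K$ gives $\mathbb{P}_2(W_n\leq K,\tau>n)\leq CK^2 n^{-5/3}$, which \emph{implies} your ballot inequality in the relevant range $K\leq n^{2/3}$, so your claimed input is correct -- the paper just derives it in an elementary way rather than quoting it. One caveat: your suggested circumvention via uniform integrability of $(n^{2/3}/P_n)^{1+\varepsilon}$ is circular, since establishing that UI would again require the same local control on $\{P_n\text{ small}\}$; it is not a genuine alternative to (b).
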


\proof

The lower bound is easy since Proposition \ref{prop:positive} (ii) gives
$$ \mathbb{E}\left[ \frac{n^{2/3}}{P_{n}}\right] \geq \E\Big[\frac{n^{2/3}}{P_n}\wedge 1\Big] \build\longrightarrow_{n\to\infty}^{} \E
\Big[\frac{1}{\mathsf{p}_{\two} S^+_1}\wedge 1\Big] >0.$$
To prove the upper bound, we first fix $k \geq 2$
and $n\geq 1$, and we evaluate $ \mathbb{P}(P_{n} =k)$. By Proposition \ref{prop:positive}~(i) and properties of
$h$-transforms, we have
$$\mathbb{P}(P_{n} =k) =  \frac{h(k)}{h(2)} \cdot  \mathbb{P}( \{W_{i}\geq 2, \forall\,  i \leq n\} \cap \{W_{n} = k\}).$$
We set $\wt W_i=W_n-W_{n-i}$ for $0\leq i\leq n$ and note that we can also define
$\wt W_i$ for $i>n$ in such a way that
$(\wt W_i)_{i\geq 0}$ is a random walk with the same jump distribution
as $W$ and $\wt W_0=0$. We have then
$$ \mathbb{P}(\{W_{i}\geq 2, \forall 0 \leq i \leq n\} \cap \{W_{n} = k\}) = \mathbb{P}( \{\wt{W}_{n} = k-2\} \cap  \{\wt W_{i} \leq k-2, \forall\,  i \leq n\} ) = \frac{ \mathbb{P}( \wt T_{k-1} = n+1)}{ q_{-1}},$$ 
where we have set $\wt T_{k-1}= \min\{ i \geq 0 : \wt W_{i} = k-1\}$. Note that
$\wt W$ has positive jumps only of size $1$. We can thus use
Kemperman's formula (see e.g.~\cite[p.122]{Pit06}) to get
$$  \mathbb{P}( \wt T_{k-1}=n+1) =  \frac{k-1}{n+1} \;\mathbb{P}(
\wt W_{n+1}=k-1).$$
From the last three displays, we have
$$ \mathbb{P}(P_{n} =k) = \frac{3}{2}\, \frac{h(k)}{h(2)}\,  \frac{k-1}{n+1}\; \mathbb{P}(\wt W_{n+1}=k-1).$$

Using the local limit theorem for random walk in the domain
of attraction of a stable distribution (see e.g. \cite[Theorem 4.2.1]{IL71}), we can find
a constant $c''$ such that 
\begin{eqnarray} \label{eq:locallimit}  \mathbb{P}(\wt W_n=k) \leq c''\,n^{-2/3}, \end{eqnarray}
for every $n\geq 1$ and $k\in\Z$.
Then, for every $n\geq 1$,
 \begin{align*}  \mathbb{E}\Big[\frac{1}{P_n}\Big]
 & =  \mathbb{E} \left[ \frac{1}{ P_{n}} \mathbf{1}_{ \{P_{n} > n^{2/3}\}}\right]+   \mathbb{E}\left[ \frac{1}{ P_{n}} \mathbf{1}_{ \{P_{n} \leq n^{2/3}\}}\right]  \\ 
 & \leq  n^{-2/3}+\sum_{k=1}^{ [ n^{2/3}]} \frac{3}{2}\, \frac{h(k)}{h(2)}\,  \frac{k-1}{n+1}\, \frac{1}{k}\; \mathbb{P}(\wt W_{n+1}=k-1)\\
 &\leq  n^{-2/3} + \frac{3 c''}{2h(2)}\;n^{-5/3}  \sum_{k=1}^{[ n^{2/3}]} h(k).\end{align*} 
 The upper bound of the lemma follows using \eqref{equivalent-h}.
 \endproof

\subsection{The scaling limit of volumes}

\label{sec:volume}

Our goal is now to study the scaling limit of the process $(V_{n})_{n\geq 0}$.  We start with a result similar to \cite[Proposition 6.4]{Ang03} about the 
distribution of the size of a Boltzmann triangulation with a large perimeter. For every $p \geq 2$, we let  $T^{(p)}$ denote a random triangulation of the $p$-gon with Boltzmann distribution. 

\begin{proposition} \label{prop:boltz} Set $ \mathsf{b}_{\two}= \frac{2}{3}$. \begin{enumerate}
\item We have 
$  \mathbb{E}[|T^{(p)}|] \sim \mathsf{b}_{\two} \cdot p^2$ as $p \to \infty$.
\item The following convergence in distribution holds:  
$$ p^{-2}|T^{(p)}| \xrightarrow[p\to\infty]{(d)}  \mathsf{b}_{\two}\cdot \xi,$$
where $ \xi$ is a random variable with density $ \displaystyle \frac{ e^{-1/2x}}{x^{5/2} \sqrt{2\pi}}$ on $ \mathbb{R}_{+}$. 
\end{enumerate}
\end{proposition}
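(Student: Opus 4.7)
The plan is to prove part 2 first through a local limit theorem, then derive part 1 via uniform integrability. By definition of the Boltzmann distribution,
$$\mathbb{P}(|T^{(p)}|=n) = \frac{(2/27)^{n}\,\#\mathcal{T}_{n,p}}{Z(p)},$$
so the explicit enumeration \eqref{eq:tnpexact} together with the asymptotic \eqref{eq:asympZp} for $Z(p)$ reduces the problem to a Stirling-type computation.

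\textbf{Part 2 (local limit).} I would show that for each fixed $y>0$, with $n = \lfloor p^{2}y\rfloor$,
$$p^{2}\,\mathbb{P}(|T^{(p)}|=n) \;\xrightarrow[p\to\infty]{}\; g(y) := \mathsf{b}_{\two}^{3/2}\,\frac{e^{-\mathsf{b}_{\two}/(2y)}}{y^{5/2}\sqrt{2\pi}},$$
and observe that $g(y)\,dy$ is the law of $\mathsf{b}_{\two}\,\xi$ via the change of variable $y = \mathsf{b}_{\two}x$. To this end, I would rewrite
$$\#\mathcal{T}_{n,p} = \frac{2^{n+1}\,(2p-3)!}{(p-2)!^{2}\,n(n-1)}\,\binom{2p+3n-4}{n-2}$$
and apply Stirling. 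The exponential factor $(27/2)^{n}$ in the binomial cancels against $(2/27)^{n}$ (as it must, since $2/27$ is the radius of convergence of $\sum_n\#\mathcal{T}_{n,p}x^n$), while the $9^{p}$ coming from $(2p-3)!/(p-2)!^{2}$ cancels against the $9^{p-1}$ in the asymptotic of $Z(p)$. The remaining polynomial factors in $p$ and $n$ combine to produce the $y^{-5/2}$ decay, and a careful expansion of $(2p+2n-2)^{2p+2n-2}$ around $(2n)^{2p+2n-2}$ (together with a parallel expansion for $(2p+3n-4)^{2p+3n-4}$) contributes the factor $e^{-\mathsf{b}_{\two}/(2y)}$ at the $O(1)$ level. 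Once the pointwise limit is established, $g$ is easily verified to be a probability density (via $u=1/y$, which turns the normalization into a Gaussian integral), and Scheff\'e's lemma applied to the piecewise-constant densities of $|T^{(p)}|/p^{2}$ upgrades pointwise convergence to the claimed convergence in distribution.

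\textbf{Part 1 (mean) and main obstacle.} From part 2, $|T^{(p)}|/p^{2}\xrightarrow{(d)}\mathsf{b}_{\two}\xi$, and a direct computation (same substitution $u=1/x$, reducing to $\pi^{-1/2}\Gamma(1/2)=1$) gives $\mathbb{E}[\xi]=1$. It then suffices to establish uniform integrability. For this I would refine the Stirling analysis to a uniform upper bound
$$p^{2}\,\mathbb{P}(|T^{(p)}|=n)\;\leq\; C\,\psi(n/p^{2})\qquad (p\geq 2,\ n\geq 1),$$
valid for some integrable function $\psi$ with $\psi(y)\sim y^{-5/2}$ at infinity and $\psi(y)=O(e^{-c/y})$ near $0$, so that $\int_0^\infty y\,\psi(y)\,dy<\infty$. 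This gives uniform integrability of $|T^{(p)}|/p^{2}$, hence $\mathbb{E}[|T^{(p)}|/p^{2}]\to\mathsf{b}_{\two}$, which is part 1. The main obstacle underlying both parts is the Stirling cancellation: several factorials grow super-polynomially in $p$ and $n$, and the Gaussian-like factor $e^{-\mathsf{b}_{\two}/(2y)}$ only emerges at order $O(1)$ after the delicate two-term expansion of $(2p+2n-2)^{2p+2n-2}$ noted above. Once this cancellation is correctly tracked, both the pointwise limit and the uniform bound follow by routine bookkeeping.
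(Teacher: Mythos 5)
Your approach to part 2 is essentially the same as the paper's, which (following Angel's Proposition 6.4) establishes a local limit theorem
$$p^2\,\P\bigl(|T^{(p)}|=[p^2x]\bigr) \xrightarrow[p\to\infty]{} \frac{2 e^{-1/(3x)}}{3 x^{5/2} \sqrt{3\pi}}$$
by Stirling, then upgrades it to weak convergence; your constant $g(y)= \mathsf{b}_{\two}^{3/2} e^{-\mathsf{b}_\two/(2y)}/(y^{5/2}\sqrt{2\pi})$ with $\mathsf{b}_\two = 2/3$ is a correct rewriting of that limit, and Scheff\'e's lemma is a fine way to finish. The genuine divergence is in part 1. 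You derive the asymptotics of $\mathbb{E}[|T^{(p)}|]$ from part 2 via uniform integrability, which forces you to upgrade the pointwise Stirling estimate to a \emph{uniform} upper bound $p^2\,\P(|T^{(p)}|=n)\leq C\,\psi(n/p^2)$ with integrable tail weight; this is believable but is real extra work that you leave at the sketch level. The paper instead sidesteps all of this by citing the exact closed form $\mathbb{E}[|T^{(p)}|]=\tfrac{1}{3}(p-1)(2p-3)$, which follows from the explicit generating function of $(\#\mathcal{T}_{n,p})_{n\geq 0}$ (Angel--Schramm, Proposition 2.4, see also Ray); part 1 is then immediate. Your route is self-contained and does not need the generating-function identity, at the cost of a harder estimate and a non-trivial gap (the uniform bound) that would need to be filled in; the paper's route is shorter because it has the exact formula in hand. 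Both approaches are valid.
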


\noindent \textbf{Remark.} We have $ \mathbb{E}[ \xi]=1$ and the size-biased version of the distribution of $ \xi$ (with density $  \frac{ e^{-1/2x}}{x^{3/2} \sqrt{2\pi}} $ on $\mathbb{R}_+$) is the  $1/2$-stable distribution
with Laplace transform $e^{-\sqrt{2\lambda}}$. Consequently, for $\lambda >0$, we have 
$$  \mathbb{E}[ e^{-\lambda \xi}] = (1+ \sqrt{ 2\lambda}) e^{- \sqrt{2 \lambda}}.$$

\proof The first assertion follows from the formula $  \mathbb{E}[|T^{(p)}|]= \frac{1}{3}(p-1)(2p-3)$ for $p \geq 2$ which is
easily derived from the exact formula for the generating
function of the sequence $(\#\mathcal{T}_{n,p})_{n\geq 0}$
found in \cite[Proposition 2.4]{AS03}. See also
\cite[Proposition 3.4]{Ray13}.

For the second assertion, we proceed as in \cite[Proposition 6.4]{Ang03}. From the explicit expressions \eqref{eq:tnpexact} and \eqref{eq:defzp}, an asymptotic expansion using Stirling's formula shows that, for every fixed $x >0$, we have 
$$p^2\,\P(|T^{(p)}|=[p^2x])= p^2 \frac{(2/27)^{[p^2x]}\,\# \mathcal{T}_{[p^2x],p}}{Z(p)}  \xrightarrow[p\to\infty]{} \frac{2 e^{-1/(3x)}}{3  x^{5/2} \sqrt{3\pi}},$$
and the convergence holds uniformly 
when $x$ varies over a compact subset of $ \mathbb{R}_{+}$.  Since the right-hand side of the last display is the density of the variable $2 \xi/3$, the desired result follows. \endproof 

We are now ready to prove Theorem \ref{thm:scalingpeeling}. 

\begin{proof}[Proof of Theorem \ref{thm:scalingpeeling}]
%
%Since the Lévy measure of the spectrally negative $3/2$-stable process  integrates $x^2$ around $0^+$ it follows by absolute continuity relations (see \cite{Cha97}) that the expectation of the sum of the squares of the jumps of $ S^+$ up to time $t \geq 0$ is finite. Hence  $ \mathbb{E}[Z_{t}] < \infty$ for all $t \geq 0$ and so $Z_{t}$ is finite for all $t \geq 0$. 
%\begin{theorem}[Scaling limit for general peelings] \label{thm:scalingpeeling} With the above notation, for any peeling of the UIPT we have the following convergence in distribution in the sense of Skorokhod
%$$ \left( \frac{P_{[nt]}}{ \mathsf{p}_{\two} \cdot n^{2/3}}, \frac{V_{[nt]}}{ \mathsf{v}_{\two} \cdot n^{4/3}} \right)_{t \geq 0}  \xrightarrow[n\to\infty]{(d)}  \left( S^+_{t}, Z_{t}\right)_{t \geq 0},$$
%where $ \mathsf{v}_{\two} = (\mathsf{p}_{\two})^2 \cdot \mathsf{b}_{\two} = (2/3)^{7/3}$.
%\end{theorem}
%\proof 
We will verify that
\begin{equation}
\label{Th1-1}
\left( \frac{P_{[nt]} }{ \mathsf{p}_{\two} \cdot n^{2/3}}, \frac{V_{[nt]}}{ \mathsf{v}_{\two} \cdot n^{4/3}} \right)_{0\leq t\leq 1}  \xrightarrow[n\to\infty]{(d)}  \left(S^+_{t},Z_{t}\right)_{0 \leq t \leq 1}.
\end{equation}
The statement of  Theorem \ref{thm:scalingpeeling} follows, noting that
there is no loss of generality in restricting the time interval to $[0,1]$. The constant $ \mathsf{v}_{\two}$ will appear below as 
 \begin{eqnarray} \label{def:vfrompb} \mathsf{v}_{\two} = (\mathsf{p}_{\two})^2 \, \mathsf{b}_{\two}. \end{eqnarray}
The convergence of the first component in \eqref{Th1-1} is given
by  Proposition \ref{prop:positive}. We will thus
study the conditional distribution of the second component given 
the first one, and Proposition \ref{prop:boltz} will be our main tool. 
We first note that, for every $n\geq 1$, we can write
$$V_n=|\mathsf{T}_n|= V^*_n + \wt V_n,$$
where $V^*_n$ denotes the number of inner vertices of $\mathsf{T}_n$
that belong to $\partial\mathsf{T}_i$ for some $i\leq n-1$, and $\wt V_n$ is thus 
the total number of inner vertices in the Boltzmann triangulations
that were used to fill in the holes in the case of occurence of events $ \mathsf{L}_k$
or $ \mathsf{R}_k$ at some step $i\leq n$ of the peeling process.
Since $\#(\partial\mathsf{T}_i\backslash\partial\mathsf{T}_{i-1})\leq 1$ for $1\leq i\leq n$,
it is clear that $V^*_n\leq n+2$ for every $n\geq 0$. It follows that
\eqref{Th1-1} is equivalent to the same statement where 
$V_{[nt]}$ is replaced by $\wt V_{[nt]}$. 

Next we can write, for every $k\in\{1,\ldots,n\}$, 
\begin{equation}
\label{holes-filled}
\wt V_k=\sum_{i=1}^k \mathbf{1}_{\{P_i<P_{i-1}\}}\,U_i
,
\end{equation}
where, conditionally
on $(P_0,P_1,\ldots,P_n)$,
 the random variables $U_i$ (for $i$ such that $P_i<P_{i-1}$)
are independent, and $U_i$ is distributed as $|T^{(P_{i-1}-P_i+1)}|$, with
the notation of Proposition \ref{prop:boltz}. 

Fix $\ve >0$ and set, for every $k\in\{1,\ldots,n\}$, 
\begin{equation}
\label{holes-filled2}
\wt V_k^{\leq \ve}=\sum_{i=1}^k \mathbf{1}_{\{0<P_{i-1}-P_i\leq \ve n^{2/3}\}}\,U_i
\;,\quad \wt V_k^{> \ve}=\sum_{i=1}^k \mathbf{1}_{\{P_{i-1}-P_i> \ve n^{2/3}\}}\,U_i\;.
\end{equation}
We first observe that $n^{-4/3} \mathbb{E}[\wt V_n^{\leq \ve}]$ is small uniformly in $n$ when $\ve$ is small. Indeed,
it follows from Proposition \ref{prop:boltz} that there is a constant $C$ such that
$\E[|T^{(p)}|]\leq C\,p^2$ for every $p\geq 2$, which gives
$$ {\E}[ \wt V_{n}^{ \leq \varepsilon} ] 
\leq C\sum_{i=1}^n \E[(P_{i-1}-P_i+1)^2\mathbf{1}_{\{0<P_{i-1}-P_i\leq \ve n^{2/3}\}}].$$
On the other hand, from the bound \eqref{compar1} and \eqref{eq:asympZp}, it
is straightforward to verify that, for every $i\geq 1$ and 
every $p\geq 2$,
$$\E[(P_{i-1}-P_i+1)^2\mathbf{1}_{\{0<P_{i-1}-P_i\leq \ve n^{2/3}\}}\mid P_{i-1}=p]\leq C'\sum_{j=1}^{[\ve n^{2/3}]} (j+1)^2 j^{-5/2}\leq C''\,\sqrt{\ve}\,n^{1/3},$$
with some constants $C'$ and $C''$ independent of $n$ and $\ve$.
By combining the last two displays, we obtain, for every $n\geq 1$,
\begin{equation}
\label{petit-saut}
n^{-4/3}{\E}[ \wt V_{n}^{ \leq \varepsilon} ] \leq CC''\,\sqrt{\ve}.
\end{equation}

Let us turn to $ \wt V_n^{> \ve}$. We
write $s_1,s_2,\ldots$ for the jump times
of $S^+$ before time $1$ listed
in decreasing order of their absolute values. For every 
$n\geq 1$, let $\ell^{(n)}_1,\ldots,\ell^{(n)}_{k_n}$ be all integers
$i\in\{1,\ldots,n\}$ such that $P_{i-1}-P_{i}>0$, listed in decreasing
order of the quantities $P_{i-1}-P_{i}$ (and in the usual order of $\mathbb{N}$
for indices such that $P_{i-1}-P_i$ is equal to a given value). 
For definiteness, we also set $\ell^{(n)}_i=1$ if $i> k_n$. It
follows from \eqref{conv-peri}
that, for every integer $K\geq 1$,
\begin{align}
\label{convjumptimes}
&\big(n^{-1}\ell^{(n)}_1,\ldots,n^{-1}\ell^{(n)}_K, n^{-2/3}(P^{(n)}_{\ell^{(n)}_1}-P^{(n)}_{\ell^{(n)}_1-1}), \ldots,n^{-2/3}(P_{\ell^{(n)}_K}-P_{\ell^{(n)}_K-1})\big)\notag\\
&\qquad \xrightarrow[n\to\infty]{(d)} (s_1,\ldots,s_K,\mathsf{p}_\two\,\Delta S^+_{s_1},\ldots,
 \mathsf{p}_\two\,\Delta S^+_{s_K}),
 \end{align}
and this convergence in distribution holds jointly with \eqref{conv-peri}.
Furthermore, using the conditional distribution of
the variables $U_i$ given  $(P_0,\ldots,P_n)$ and 
Proposition \ref{prop:boltz}, we also get, for every integer $K\geq 1$,
\begin{equation}
\label{convjumps}
\Bigg(\frac{U^{(n)}_{\ell^{(n)}_1}}{(P_{\ell^{(n)}_1}-
P_{\ell^{(n)}_1-1})^2},\ldots,\frac{U^{(n)}_{\ell^{(n)}_K}}{(P_{\ell^{(n)}_K}-
P_{\ell^{(n)}_K-1})^2} \Bigg)
\xrightarrow[n\to\infty]{(d)} \Big( 
\mathsf{b}_\two\,\xi_1,\ldots, \mathsf{b}_\two\,\xi_K\Big),
\end{equation}
where $\xi_1,\xi_2,\ldots $ are independent copies of the variable $\xi$
of Proposition \ref{prop:boltz}. This convergence holds jointly with \eqref{conv-peri} and
\eqref{convjumptimes}, provided that we assume that the sequence $\xi_1,\xi_2,\ldots $
is independent of $S^+$. Now note that we can choose $K$ sufficiently
large so that the probability that $|\Delta S^+_{s_K} |< \ve/(2\mathsf{p}_\two)$
is arbitrarily close to $1$. Recalling the definition of $ \wt V_n^{> \ve}$, we 
can combine \eqref{convjumptimes} and \eqref{convjumps} in order to get
the convergence
\begin{equation}
\label{conv-approxi}
\Big(n^{-2/3} P_{[nt]}, n^{-4/3}  \wt V^{> \ve}_{[nt]}\Big)_{0\leq t\leq 1}
\xrightarrow[n\to\infty]{(d)}
\Big(\mathsf{p}_\two\,S^+_t,(\mathsf{p}_\two)^2\mathsf{b}_\two\, Z^\ve_t\Big)_{0\leq t\leq 1},
\end{equation}
where the process $(Z^\ve_t)_{0\leq t\leq 1}$ is defined by
$$Z^\ve_t=\sum_{i=1}^\infty \mathbf{1}_{\{s_i\leq t,\,|\Delta S^+_{s_i} |> \ve/\mathsf{p}_\two\}}\,
(\Delta S^+_{s_i})^2\,\xi_i.$$
In agreement with the notation of the introduction, set, for every $0\leq t\leq 1$,
$$Z_t=\sum_{i=1}^\infty \mathbf{1}_{\{s_i\leq t\}}\,
(\Delta S^+_{s_i})^2\,\xi_i.$$
Then, it is easy to verify that, for every $\delta>0$, 
$$\P\Big(\sup_{0\leq t\leq 1} |Z_t-Z^\ve_t| >\delta\Big) 
\xrightarrow[\ve\to 0]{} 0.$$
Furthermore, \eqref{petit-saut} also gives
$$\sup_{n\geq 1}  \mathbb{P}\Big(\sup_{0\leq t\leq 1}|n^{-4/3}\wt V_{[nt]} -n^{-4/3} \wt V^{\geq \ve}_{[nt]}|>\delta\Big) \xrightarrow[\ve\to 0]{} 0.$$
The convergence \eqref{Th1-1}, with $V$ replaced by $\wt V$, follows from
\eqref{conv-approxi} and the preceding considerations. This completes the proof.
\end{proof}

\section{Distances in the peeling process}
\subsection{Peeling by layers} 
\label{sec:layers}
In this section, we focus on a particular peeling algorithm, which we call the peeling by layers. As previously, we start from the trivial
triangulation that consists only of the root edge. At the first step,  we discover the triangle on the left side
of the root edge to get $\mathsf{T}_1$. To get $\mathsf{T}_2$, we then discover the triangle on the right side of the root edge. Then 
we continue by induction in the following way. We note that the triangle revealed at step $n$ has either one or two edges in the boundary of
$\mathsf{T}_n$. If it has one edge in the boundary, we discover at step $n+1$ the triangle incident to this edge which is not
already in $\mathsf{T}_n$. If it has two edges in the boundary, we do the same for the right-most among these two edges (this makes
sense because in that case the boundary of $\mathsf{T}_n$ must contain at least $3$ edges). See Fig.~\ref{fig:layers} for an example.

This algorithm is particularly well suited to the study of distances from the root vertex, for the following reason. One easily 
proves by induction that, for every $n\geq 1$, one
and only one of the two following possibilities occurs. Either all vertices of $\partial \mathsf{T}_{n}$ are at the same 
distance $h$ from the root vertex. Or there is an integer $h\geq 0$ such that $\partial \mathsf{T}_{n}$  contains both vertices at distance $h$ and 
at distance $h+1$ from the root vertex. In the latter case, vertices at distance $h$ form a connected subset of $\partial \mathsf{T}_{n}$,
and the edge that will be ``peeled off'' at step $n+1$ is the only edge of the boundary whose left end is at distance $h+1$ and whose right end is at distance $h$. 
In both cases we write $H_n=h$, so that the boundary $\partial \mathsf{T}_{n}$ does contain vertices at distance $H_n$
and may also contain vertices at distance $H_n+1$. 
We also set $H_0=0$ by convention.

 \begin{figure}[!h]
 \begin{center}
 \includegraphics[width=15cm]{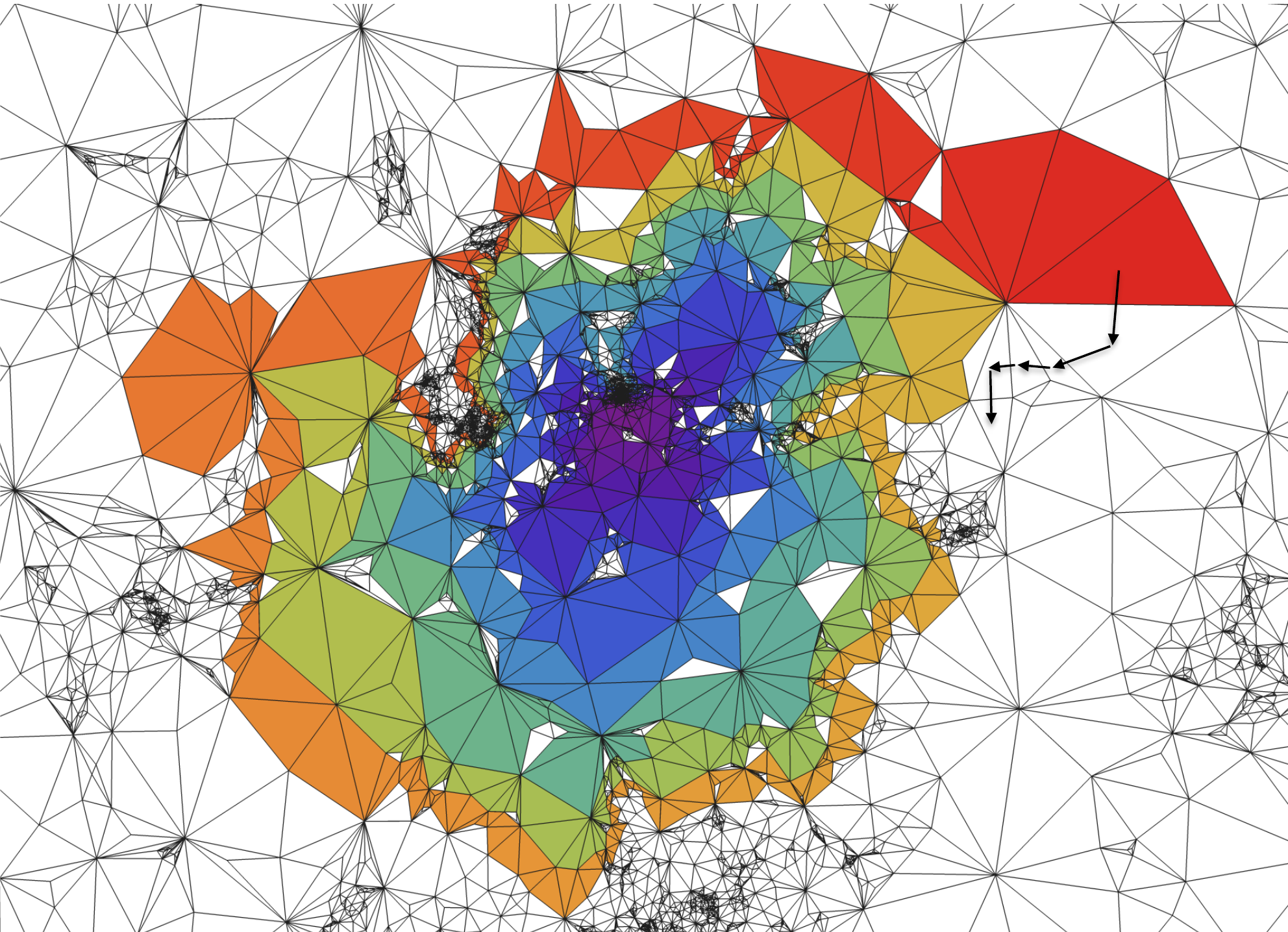}
 \caption{ \label{fig:Budd} The peeling by layers algorithm in a random triangulation drawn in the plane via Tutte's barycentric embedding. 
 The successive layers are represented with different colors. Courtesy of Timothy Budd. See \textrm{https://www.youtube.com/watch?v=afR9yo1P9vE} for the associated movie. }
 \end{center}
 \end{figure}

Since the peeling algorithm discovers the whole triangulation $\mathsf{T}_\infty$
(Corollary \ref{cor:bouffetout}), it is clear that $H_n$ tends to $\infty$
as $n\to\infty$. Also obviously $0\leq H_{n+1}-H_n\leq 1$ for every $n\geq 1$, hence we may set $\sigma_r:=\min\{n\geq 0 : H_n=r\}$
for every integer $r\geq 1$. A simple argument shows that for $n=\sigma_r$, all vertices of
$\partial \mathsf{T}_{n}$ are at distance $r$ from the root vertex (this however does not characterize
$\sigma_r$ since there may exist other times $n>\sigma_r$ with the same property). Furthermore, any
vertex lying outside $\mathsf{T}_{\sigma_r}$ must be at distance at least $r+1$ from the root vertex, and any
triangle of $\mathsf{T}_{\sigma_r}$ that is incident to an edge of the boundary contains a vertex 
at distance $r-1$ from the root vertex (indeed this triangle has been discovered by the peeling algorithm at a 
time where the boundary still contained vertices at distance $r-1$, and the corresponding peeled edge
had to connect a vertex at distance $r$ to a vertex at distance $r-1$). It follows from the previous
considerations that we have $\mathsf{T}_{\sigma_r}=B^\bullet_r(T_{\infty})$ for every $r\geq 1$. Furthermore, 
for every $n\geq 1$ such that $H_n>0$, we have $\sigma_{H_n}\leq n < \sigma_{H_n+1}$ and therefore
 \begin{eqnarray} \label{eq:nesting} B^\bullet_{H_{n}}(T_{\infty}) \subset \mathsf{T}_{n} \subset B^\bullet_{H_{n}+1}(T_{\infty}).  \end{eqnarray}
 This also holds for $n$ such that $H_n=0$, provided we define $B^\bullet_{0}(T_{\infty})$ as the trivial triangulation
 consisting only of the root edge. 

An important consequence is the following fact, which needs not be true for a 
general peeling algorithm. If $\f_n$ stands for the $\sigma$-field generated by 
$\mathsf{T}_0,\mathsf{T_1},\ldots,\mathsf{T}_n$, then the graph distances of vertices of $\mathsf{T}_n$ from the
root vertex are measurable with respect to $\f_n$. This is clear since \eqref{eq:nesting} shows that a geodesic from
any vertex of $\mathsf{T}_n$ to the root visits only vertices of $\mathsf{T}_n$.
 
At an intuitive level, the peeling algorithm ``turns'' around the boundary of the hull of balls of the UIPT in clockwise order and discovers $T_{\infty}$ layer after layer. When turning around $ \partial B_{r}^\bullet(T_{\infty})$, the peeling process creates new vertices at distance $r+1$ from the root vertex in a way similar to a front propagation. See  Fig.~\ref{fig:layers}.

\begin{figure}[!h]
 \begin{center}
 \includegraphics[width=16cm]{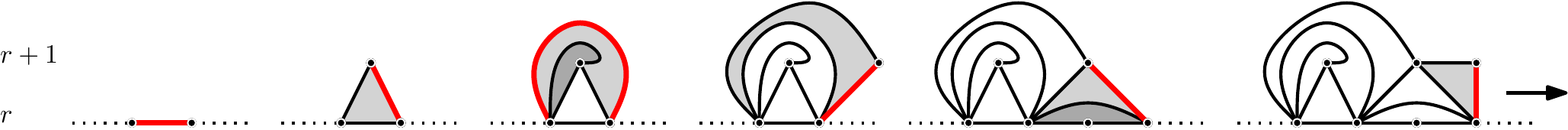}
 \caption{Illustration of the peeling by layers. When $B^\bullet_{r}(T_{\infty})$ has been discovered, we turn around the boundary $\partial B_{r}^\bullet(T_{\infty})$ from left to right in order to reveal the next layer and obtain $B_{r+1}^\bullet(T_{\infty})$. \label{fig:layers}}
 \end{center}
 \end{figure}
 
To simplify notation, we write $B_{r}^\bullet$ and $ \partial B_{r}^\bullet$ instead of $ B_{r}^\bullet(T_{\infty})$ and $ \partial B_{r}^\bullet(T_{\infty})$ in this section. As \eqref{eq:nesting}
suggests, 
the proof of Theorem \ref{thm:scalinghull} will rely on the convergence in distribution 
of a rescaled version of the process $H_n$. Let us sketch some ideas of the proof of the latter convergence. Between times 
$\sigma_r$ and $\sigma_{r+1}$, the peeling process needs to turn around $ \partial B^\bullet_{r}$, which roughly takes a time linear in $| \partial B_{r}^\bullet|$ (see Proposition \ref{prop:A->1/3} below for a precise statement). We 
thus expect that, for some positive constant $a$, 
\begin{eqnarray}  \sigma_{r+1}-\sigma_r \approx \frac{1}{a}\, | \partial B_{r}^\bullet| = \frac{1}{a}P_{\sigma_r} \label{eq:deltaH} \end{eqnarray} 
and therefore 
$$\sigma_r \approx \frac{1}{a}\sum_{i=1}^{r-1} P_{\sigma_i}.$$
A formal inversion now gives for $k$ large,
$$H_k=\sup\{r\geq 0: \sigma_r\leq k\} \approx a\,\sum_{i=1}^k \frac{1}{P_i},$$
and the limit behavior of the right-hand side can be derived from the fact that $(n^{-2/3} P_{[nt]})_{t\geq 0}$ 
converges in distribution to $(\mathsf{p}_{\two}\,S^+_t)_{t\geq 0}$ (Proposition \ref{prop:positive}).  The following proposition shows that the previous heuristic considerations are indeed correct with the value of $a$ given by $ \mathsf{a}_{\two} = 1/3$ (note that
$\mathsf{h}_\two$ in Proposition \ref{prop:scalinglayers} below is then equal to $\mathsf{a}_\two/\mathsf{p}_\two$, and see also Proposition \ref{prop:A->1/3}). 

\begin{proposition}[Distances in the peeling by layers]  \label{prop:scalinglayers} We have the following convergence in distribution for the Skorokhod topology 
 \begin{eqnarray*} \left(  \frac{P_{[nt]}}{ \mathsf{p}_{\two} \cdot n^{2/3}}, \frac{V_{[nt]}}{  \mathsf{v}_{\two} \cdot  n^{4/3}}, \frac{H_{[nt]}}{ \mathsf{h}_{\two}\cdot n^{1/3}}\right)_{t \geq 0}  &\xrightarrow[n\to\infty]{(d)}&  \left(S_{t}^+, Z_{t},\int_{0}^t \frac{ \mathrm{d}u}{S_{u}^+} \right)_{t \geq 0},  \end{eqnarray*}
 where $ \mathsf{h}_{\two} = 12^{-1/3}$.
\end{proposition}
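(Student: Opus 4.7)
My plan is to carry out the heuristic argument outlined just above the statement: the key identification $H_n \approx \frac{1}{3}\sum_{j=1}^n P_j^{-1}$ must be made rigorous, and then one passes to the Riemann-sum limit using Proposition~\ref{prop:positive}. There are two essentially independent pieces: (A) showing that this Riemann sum has the announced limit (jointly with Theorem~\ref{thm:scalingpeeling}), and (B) bridging $H_n$ to the Riemann sum via Proposition~\ref{prop:A->1/3}, which I assume provides a quantitative form of $\tau_r := \sigma_{r+1}-\sigma_r \approx 3\,P_{\sigma_r}$.

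For (A), I would establish the joint convergence
$$\left(\frac{P_{[nt]}}{\mathsf{p}_\two n^{2/3}},\;\frac{V_{[nt]}}{\mathsf{v}_\two n^{4/3}},\;\frac{1}{n^{1/3}}\sum_{j=1}^{[nt]}\frac{1}{P_j}\right)_{t\geq 0} \xrightarrow[n\to\infty]{(d)} \left(S^+_t,\;Z_t,\;\frac{1}{\mathsf{p}_\two}\int_0^t\frac{du}{S^+_u}\right)_{t\geq 0}.$$
By Skorokhod representation, the convergence of $P_{[n\cdot]}/(\mathsf{p}_\two n^{2/3})$ to $S^+$ may be taken almost sure; on any interval $[\delta,t]$ on which $S^+$ stays bounded below by a positive constant, the Riemann-sum convergence is then routine from Skorokhod convergence of the integrands. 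The delicate behaviour near $0$ is controlled by Lemma~\ref{lem:1/P}, which gives
$$\mathbb{E}\Bigg[n^{-1/3}\sum_{j=1}^{[\delta n]}\frac{1}{P_j}\Bigg]\leq c_2\, n^{-1/3}\sum_{j=1}^{[\delta n]}j^{-2/3}\leq C\delta^{1/3},$$
and the analogous bound $\mathbb{E}[\int_0^\delta du/S^+_u]=O(\delta^{1/3})$ follows from the $2/3$-self-similarity of $S^+$ and the finiteness of $\mathbb{E}[1/S^+_1]$.

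For (B), I would show that $H_n = \frac{1}{3}\sum_{j=1}^n 1/P_j + o_{\mathbb{P}}(n^{1/3})$, uniformly in $n$ on any window of size $N n_0$. Starting from the decomposition
$$\sum_{j=1}^{\sigma_r}\frac{1}{P_j} \;=\; \sum_{i=0}^{r-1}\sum_{j=\sigma_i+1}^{\sigma_{i+1}}\frac{1}{P_j},$$
one observes that on each interval $[\sigma_i,\sigma_{i+1}]$ the process $P$ evolves like the random walk $W$ run for $\tau_i \approx 3 P_{\sigma_i}$ steps; since $W$ is in the domain of attraction of a spectrally negative $3/2$-stable law, its fluctuations on that time scale are of order $P_{\sigma_i}^{2/3}$, negligible compared with $P_{\sigma_i}$. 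Hence $\sum_{j=\sigma_i+1}^{\sigma_{i+1}}1/P_j \approx \tau_i/P_{\sigma_i} \approx 3$ by Proposition~\ref{prop:A->1/3}, and summation over $0\leq i <r=H_{\sigma_r}$ yields the claimed approximation at times $n=\sigma_r$. For general $n\in[\sigma_r,\sigma_{r+1})$ the residual sum over $(\sigma_r,n]$ is bounded by $\tau_r/P_{\sigma_r}=O(1)$, which is invisible at the scale $n^{1/3}$.

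Combining (A) and (B) with Theorem~\ref{thm:scalingpeeling} yields
$$\frac{H_{[nt]}}{n^{1/3}} \xrightarrow[n\to\infty]{(d)} \frac{1}{3\mathsf{p}_\two}\int_0^t\frac{du}{S^+_u}=\mathsf{h}_\two\int_0^t\frac{du}{S^+_u},$$
since a direct computation gives $3\mathsf{p}_\two\mathsf{h}_\two = 3\cdot(2/3)^{2/3}\cdot 12^{-1/3}=1$. The main obstacle is in part (B): the pointwise approximation $\tau_i/P_{\sigma_i}\to 3$ is not enough, because the errors accumulate over $r\sim n^{1/3}$ layers. One therefore needs either an $L^2$ version of Proposition~\ref{prop:A->1/3} or a second-moment control on $\sum_i (\tau_i - 3P_{\sigma_i})/P_{\sigma_i}$, together with a uniform bound on the fluctuations of $P$ along each layer. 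Handling these error terms while keeping the convergence joint with the limits of Theorem~\ref{thm:scalingpeeling} is where most of the work lies.
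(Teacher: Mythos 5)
Your plan is genuinely different from the paper's argument, and the gap you flag in part (B) is the right thing to worry about — but it is more severe than you suggest. Proposition \ref{prop:A->1/3} is a purely macroscopic law-of-large-numbers statement, $A_n/n \to 1/3$ in probability; it does not by itself yield a \emph{layer-by-layer} approximation $\tau_i/P_{\sigma_i}\approx 3$, and a fortiori it gives no control on the second moments of $\tau_i-3P_{\sigma_i}$ you would need to prevent error accumulation over $r\sim n^{1/3}$ layers. Proving such a quantitative per-layer statement would amount to re-proving a strengthened form of Proposition \ref{prop:A->1/3}, which is the bulk of the difficulty. Your approximation $\sum_{j=\sigma_i+1}^{\sigma_{i+1}}1/P_j\approx\tau_i/P_{\sigma_i}$ also silently uses that $P_j$ barely moves over a layer; the claimed fluctuation bound of order $P_{\sigma_i}^{2/3}$ would again have to be established uniformly over all layers, another accumulation issue.

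The paper avoids this entirely by never trying to produce a per-layer statement. Instead it exploits the exact structural identity $A_{\sigma_{r+1}}-A_{\sigma_r}=P_{\sigma_r}$ together with the monotonicity of $A$ inside a layer, which immediately gives, for any macroscopic window $n_1\le n_2$,
$$\frac{A_{n_2}-A_{n_1}}{\max_{n_1\le i\le n_2}P_i}\;\le\; H_{n_2}-H_{n_1}+1,\qquad H_{n_2}-H_{n_1}\;\le\;\frac{A_{n_2}-A_{n_1}}{\min_{n_1\le i\le n_2}P_i}+1.$$
Feeding the joint convergence of $(P,V,A)$ (from Theorem \ref{thm:scalingpeeling} plus Proposition \ref{prop:A->1/3}) into this sandwich, and refining over dyadic subdivisions of a fixed interval $[s,t]$, produces upper and lower Riemann sums that both converge to $\mathsf{h}_\two\int_s^t\mathrm{d}u/S^+_u$. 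The passage $s\downarrow 0$ is then handled not via Lemma \ref{lem:1/P} on $\sum 1/P_j$ (you would still need to relate this sum to $H$ near the origin) but via the direct bound $\E[H_n]\le Cn^{1/3}$ of Lemma \ref{lem:H->0}. So the two approaches share the Riemann-sum limit and the near-zero moment control, but the paper's use of $A$ as an exactly additive ``clock'' over layers is precisely what makes the argument close without the layer-by-layer error analysis your proposal would require.

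Your part (A) — the joint convergence of the rescaled triple $(P,V,\sum 1/P_j)$ — is sound and could be carried out as you sketch; the constant check $3\mathsf{p}_\two\mathsf{h}_\two=1$ is also correct. The missing ingredient is a rigorous bridge from $H_n$ to $\frac13\sum_{j\le n}1/P_j$, and as written you do not have one.
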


%\begin{figure}[!h]
% \begin{center}
% \includegraphics[width=13cm]{stable+integrale}
% \caption{Two simulations of $S^+$ and the integral of its inverse.}
% \end{center}
% \end{figure} 

Noting that $|B^\bullet_{r}|=V_{\sigma_r}$ and $|\partial B^\bullet_{r}|=P_{\sigma_r}$,
we will derive Theorem \ref{thm:scalinghull} from the last proposition 
via a time change argument in Section \ref{sec:lamperti}. This
derivation involves time-changing the limiting processes $S_{t}^+$ and $ Z_{t}$ by the inverse of the 
increasing process $\int_{0}^t \frac{ \mathrm{d}u}{S_{u}^+} $, which is clearly related to the
Lamperti transformation connecting continuous-state branching processes to
spectrally positive L\'evy processes. In the next section, we state and prove Proposition 
\ref{prop:A->1/3}, which is the key ingredient of the proof of Proposition \ref{prop:scalinglayers}.
The latter proof will be given in Section \ref{sec:dist-layers}.

\subsection{Turning around layers}
%note{Beaucoup de petits changements dans cette partie : à relire. En particulier j'ai utilisé les arêtes plutôt que les sommets pour la définition de $ \mathscr{L}$ ce qui est plus cohérent pour le peeling by layers sur le dual.} \bigskip 

\label{sec:turn-layers}

We write $ \mathscr{L}$ for the set of all {edges} of $T_{\infty}$ that are part of $ \partial B_{r}^\bullet$ for some integer $r \geq 1$. 
Note that all these {edges} belong to $\partial\mathsf{T}_n$ for some $n\geq 1$ (because we know that $B^\bullet_r=\mathsf{T}_{\sigma_r}$ for every $r\geq 1$), but the converse is not true.
%there may also be {edges} of $\partial\mathsf{T}_n$ that do not belong to $ \mathscr{L}$ (an {edge linking two vertices} at distance $r$ from the root may appear on $\partial\mathsf{T}_n$ for some $n<\sigma_r$ and then be   ``swallowed'' by the peeling algorithm before time $\sigma_r$). 
For every $n\geq 0$, we write $A_n$ for the number of {edges of $ \mathscr{L}$ belonging to $ \mathsf{T}_{n} \backslash \partial \mathsf{T}_{n}$.}

Clearly, $(A_n)_{n\geq 0}$ is an increasing process. Also, recalling our notation $\f_n$ for the $\sigma$-field generated by 
$\mathsf{T}_0,\mathsf{T_1},\ldots,\mathsf{T}_n$, the random variable $A_n$
is measurable with respect to $\f_n$. The point is that, on one hand, the hulls $B^\bullet_1,\ldots,B^\bullet_{H_n}$ are measurable 
functions of $\mathsf{T}_n$, and, on the other hand, {edges} of $\mathsf{T}_{n} \backslash \partial \mathsf{T}_{n}$ which may be in $ \mathscr{L}$ (i.e.~which link two vertices at the same distance from the root) 
are at distance at most $H_n$ from the root (here it is important that we 
considered only  {edges  of $\mathsf{T}_n\backslash \partial \mathsf{T}_{n}$
in the definition of $A_n$, since the $\sigma$-field $\f_n$ does not give enough information to decide whether
an edge of $\partial \mathsf{T}_{n}$ linking two vertices at distance $H_n+1$ from the root belongs to $ \mathscr{L}$ or not}).

\begin{proposition} \label{prop:A->1/3} We have 
$$ \frac{A_{n}}{n} \xrightarrow[n\to\infty]{(P)}  \frac{1}{3}=:\mathsf{a}_{\two}.$$
\end{proposition}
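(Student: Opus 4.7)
The plan is to analyze the conditional expected one-step increment of the (increasing) process $(A_m)_{m\geq 0}$, show that $\mathbb{E}[A_{m+1}-A_m\mid\f_m]$ tends to $1/3$ as the perimeter $P_m$ grows, and then pass to $A_n/n\to 1/3$ in probability via a law-of-large-numbers argument.

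An edge contributes to $A_{m+1}-A_m$ exactly when it lies in $\mathscr{L}$ and transits from $\partial\mathsf{T}_m$ to the interior at step $m+1$. Only the peeled edge $e$ leaves the boundary in case $\mathsf{C}$, while in cases $\mathsf{R}_k$ or $\mathsf{L}_k$ the edge $e$ together with the $k$ swallowed boundary edges all become interior. I would focus on ``mid-layer'' steps, in which $\partial\mathsf{T}_m$ carries vertices at both distances $H_m$ and $H_m+1$. The boundary then decomposes into an \emph{old arc} of surviving edges of $\partial B^\bullet_{H_m}$ (at distance $H_m$, hence lying in $\mathscr{L}$), a \emph{new arc} of distance-$(H_m+1)$ edges, and exactly two transition edges. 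By the peeling-by-layers rule, the peeled $e$ is a transition edge, so $e\notin\mathscr{L}$; of the two choices $\mathsf{R}_k,\mathsf{L}_k$, one swallows $k$ edges from the old arc (each lying in $\partial B^\bullet_{H_m}\subset\mathscr{L}$ and contributing $1$ to $\Delta A_m$), while the other swallows $k$ edges from the new arc which contribute $0$: since $\mathsf{T}_{m+1}\subset B^\bullet_{H_m+1}$, the filled hole is a subset of $B^\bullet_{H_m+1}$, so a swallowed distance-$(H_m+1)$ edge has both incident faces in $B^\bullet_{H_m+1}$ and therefore is not on $\partial B^\bullet_{H_m+1}$ (nor on any other $\partial B^\bullet_s$). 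Ignoring the low-probability event that the swallowed arc crosses a transition edge, this gives
\[
\mathbb{E}[A_{m+1}-A_m\mid\f_m]\;=\;\sum_{k=1}^{P_m-2}k\,q_k^{(P_m)}\;\xrightarrow[P_m\to\infty]{}\;\sum_{k\geq 1}k\,q_k\;=\;\frac{q_{-1}}{2}\;=\;\frac{1}{3},
\]
where the convergence follows from dominated convergence using \eqref{compar1} and the bound $q_k\leq C\,k^{-5/2}$ from \eqref{eq:tailX}, and the last equality is the centering identity recalled in Section~\ref{sec:perimeter}.

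The rest is routine but technical. The exceptional ``first peel of a layer'' steps (one per layer, where the entire boundary lies at distance $H_m$ and the peeled edge does lie in $\mathscr{L}$) have a bounded expected increment, and they number only $H_n$ up to time $n$; Lemma~\ref{lem:1/P} gives $\mathbb{E}\big[\sum_{i\leq n}1/P_i\big]=O(n^{1/3})$, and a separate elementary argument bounding the time spent in each layer below by an order-$P_{\sigma_{r-1}}$ quantity yields $\mathbb{E}[H_n]=O(n^{1/3})$, so these contributions are $o(n)$. Passing from the drift convergence to $A_n/n\to 1/3$ in probability then relies on a truncation-plus-martingale argument: since $\sum k^2 q_k=\infty$ the increments have infinite variance, so one truncates the swallow size at a slowly growing level $K(n)\to\infty$, controls the truncated martingale via its $L^2$ bracket, and bounds the $L^1$ cost of truncation by the tail $\sum_{k>K}k\,q_k=O(K^{-1/2})$. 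This truncation bookkeeping, together with the corrections for the rare ``transition-crossing'' swallowings, is the main technical obstacle, though each correction is of lower order in $n$.
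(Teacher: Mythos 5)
Your geometric analysis of $\mathscr{L}$-membership is correct, and your identification of the key fact that only one of $\mathsf{R}_k,\mathsf{L}_k$ swallows old-arc edges—yielding a conditional drift close to $\sum_k k\,q_k^{(P_m)}\approx\sum_k k\,q_k = q_{-1}/2 = 1/3$—agrees with \eqref{eq:apparitionde1/3} in the paper. But there are two issues, one minor and one that is a genuine gap.

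The minor issue: your truncation argument is unnecessary and rests on a misstatement. While $\sum_k k^2 q_k$ does diverge, the conditional variance of $\Delta A_n$ given $P_n=p$ is not infinite: the swallow size is bounded by $p-2$, so $\mathbb{E}[(\Delta A_n)^2\mid P_n=p]\leq 1+C\sum_{k=1}^{p-2}(k+1)^2 k^{-5/2}=O(\sqrt{p})=O(\sqrt{n})$. This is exactly what the paper uses to show $M_n:=A_n-\sum_{i<n}\eta_i$ satisfies $\E[M_n^2]=O(n^{3/2})$, hence $M_n/n\to 0$ in $L^2$, with no truncation at all.

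The real gap is that you never justify that the ``transition-crossing'' steps are negligible, and this is the heart of the proof. You say to ignore them because they are low-probability, but the only exceptional steps you explicitly account for are the first-peel-of-a-layer steps (one per layer, giving $O(H_n)=o(n)$ of them). This misses the much larger set of mid-layer steps where $U_m$ (the old-arc length) or $G_m$ (the new-arc length) is small—say bounded by a fixed $L$—and a swallow of size $k$ can cross the transition edges, so the clean identity $\Delta A_m=k\mathbf{1}_{\mathsf{R}_k}$ fails. To conclude you need that, for each fixed $L$, the fraction of times $i\leq n$ with $U_i\leq L$ or $G_i\leq L$ tends to $0$ in probability; this is precisely Lemma~\ref{ideal-envir} in the paper. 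Its proof is not an ``elementary correction'': it requires analyzing the Markov chain $(P_n,G_n,H_n)$ (and its twin $(P_n,U_n,H_n)$), coupling it with auxiliary random walks (using \eqref{compar1}--\eqref{compar2}), and showing that successive returns to the bad set $\{G_i=k\}$ are separated by gaps of length $\gtrsim N$ with probability bounded below, uniformly as the boundary grows. Without an argument of this kind, the claim that the drift average $\frac1n\sum_i\eta_i$ converges to $1/3$ is unsubstantiated, since a priori the process could linger near the layer interface for a positive fraction of time.
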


\proof We use the notation $\Delta A_n= A_{n+1}-A_n$ for every $n\geq 0$. {We note that the inner edges of the Boltzmann triangulations that are used to fill in the holes created by the peeling algorithm cannot be in $ \mathscr{L}$,
and it follows that we have 
 \begin{eqnarray} \label{eq:bounddeltaA_{n}} 0\leq \Delta A_n\leq (\Delta P_{n})_- +1  \end{eqnarray} for every $n\geq 0$, the 
 additional term $1$ coming from the fact that the edge that is peeled at time $n$ could actually be in $ \mathscr{L}$ (this happens only at  times of the form $n = \sigma_{r}$)}. In particular $ \mathbb{E}[\Delta A_n]<\infty$
and $\mathbb{E}[A_n]<\infty$. We then set, for every $i\geq 0$,
$$ \eta_{i} = \mathbb{E}[\Delta A_{i} \mid \mathcal{F}_{i}],$$ 
so that $M_{n} := A_{n}- \sum_{i=0}^{n-1} \eta_{i}$ is a martingale with respect to the filtration $(\f_n)$. 

We first prove that $M_{n}/n \to 0$ in probability. To this end,  we use bounds on the second moment of $\Delta M_{n}$. 
Recall our bound {$\Delta A_n\leq (\Delta P_{n})_-+1$}, and note that, for every $k\geq 1$ and every $p\geq 2$, 
\eqref{eq:tailX} and \eqref{compar1} give
$$ \mathbb{P}( \Delta P_{n} =-k \mid P_{n}=p) = \frac{h(p-k)}{h(p)} \mathbb{P}( \Delta W_{n} =-k) \leq C k^{-5/2},$$ 
for some constant $C>0$ independent of $p$ and $k$. It follows that
$$ \mathbb{E}[(\Delta A_{n})^2 \mid P_{n}=p]\leq   1+ C \sum_{k=1}^{p-2} {(k+1)^2 k^{-5/2}}  = O(\sqrt{p}).  
$$
Since $P_n \leq n+2$, we deduce from the last display that 
$$ \mathbb{E}[(\Delta M_{n})^2] =  \mathbb{E}[(\Delta A_{n} - \eta_{n})^2] \leq 2\left( \mathbb{E}[(\Delta A_{n})^2] +  \mathbb{E}[  \mathbb{E}[\Delta A_{n} \mid \mathcal{ F}_{n}]^2]\right) {\leq } 4 \mathbb{E}[( \Delta A_{n})^2]= O(\sqrt{n}).$$  
Since the martingale $M$ has orthogonal increments, we get  $ \mathbb{E}[M_{n}^2] =O(n^{3/2})$ and it follows 
that $M_{n}/n \to 0$ in $L^2$. 

To complete the proof of Proposition \ref{prop:A->1/3}, it is then enough to verify 
that
\begin{equation}
\label{conv-condi-means}
  \frac{1}{n}\sum_{i=0}^{n-1} \eta_{i}  \xrightarrow[n\to\infty]{(P)} \frac{1}{3}.
  \end{equation}

The idea of the proof is as follows. For most times $n$, the boundary $\partial \mathsf{T}_{n} $ has both a ``large'' number of
vertices at distance $H_n$ and a ``large'' number of
vertices at distance $H_n+1$ from the root. Then, except on a set of small probability, the only events 
leading to a nonzero value of $\Delta A_n$ are events of type $\mathsf{R}_k$ for which  \begin{eqnarray} \label{eq:heuristicdeltaAn} \Delta A_n=-\Delta P_n = k.  \end{eqnarray} The conditional
expectation of $\Delta A_n$ is thus computed using the probabilities of the events $\mathsf{R}_k$. 

To make the preceding argument rigorous, we introduce some notation. 
For every integer $n\geq 0$, write $U_n$
for the number of vertices in $\partial  \mathsf{T}_n$
that are at distance $H_n$ from the root vertex. Note that the function $n\mapsto U_n$
is nonincreasing on every interval $[\sigma_r,\sigma_{r+1}[$
where $H_n$ is equal to $r$. We also set $G_n=P_n-U_n$, which represents 
the number of vertices in $\partial  \mathsf{T}_n$
that are at distance $H_n+1$ from the root vertex.

\begin{lemma}
\label{ideal-envir}
For every integer $L\geq 1$, we have 
$$\frac{1}{n} \sum_{i=0}^{n} \mathbf{1}_{\{U_i\leq L\;{\it or}\;G_i\leq L\}}
 \xrightarrow[n\to\infty]{(P)}  0.$$
\end{lemma}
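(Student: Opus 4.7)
The plan is to split the indicator by the union bound, $\mathbf{1}_{\{U_i\le L\text{ or }G_i\le L\}}\le\mathbf{1}_{\{G_i\le L\}}+\mathbf{1}_{\{U_i\le L\}}$, bound each of the two resulting sums in expectation by $o(n)$, and then conclude by Markov's inequality. For both sums I would decompose the time axis into the successive layers $[\sigma_r,\sigma_{r+1})$ and bound the conditional expected contribution of each layer by a constant depending only on $L$.

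For the $G$-sum, within a layer $G_{\sigma_r}=0$, and at each subsequent step a case $\mathsf{C}$ event (whose conditional probability is $q_{-1}^{(P_i)}\ge q_{-1}=2/3$ by~\eqref{compar2}) raises $G$ by exactly $1$ — the new vertex, lying outside $B^\bullet_{H_i}$, is necessarily at distance $H_i+1$ — whereas cases $\mathsf{R}_k$ or $\mathsf{L}_k$ (each of conditional probability $\le q_k$ by~\eqref{compar1}) change $G$ by some quantity in $[-k,0]$. Using the invariant that $\partial\mathsf{T}_n$ always splits into two contiguous arcs, one of distance-$H_n$ and one of distance-$(H_n+1)$ vertices — a structural property preserved by the peeling-by-layers algorithm — one can stochastically minorize $(G_{\sigma_r+j})_{j\ge 0}$ by a random walk with strictly positive drift; the first hitting time of $\{L+1,L+2,\dots\}$ by this minorant then has bounded expectation $C(L)$. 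For the $U$-sum, within layer $r$ the process $U$ is non-increasing and $\ge 1$, and once $U$ first drops to $\le L$ the layer must end in a bounded expected number of further steps, because only swallowing events can further reduce $U$ and the conditional probability of such an event at each step is bounded below; this yields an analogous per-layer bound $C'(L)$. Putting the two together,
\begin{equation*}
\mathbb{E}\Big[\sum_{i=0}^n\mathbf{1}_{\{U_i\le L\text{ or }G_i\le L\}}\Big]\le\bigl(C(L)+C'(L)\bigr)\bigl(1+\mathbb{E}[H_n]\bigr).
\end{equation*}

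The last ingredient is $\mathbb{E}[H_n]=o(n)$, which must be proved without circular use of Proposition~\ref{prop:scalinglayers}. The expected one-step decrement of $U$ inside a layer is at most $\sum_{k\ge 1}2k\,q_k^{(P_i)}\le 2/3$, combining~\eqref{compar1} with~\eqref{eq:sumkZp}; since $U$ drops from $P_{\sigma_r}$ to $\ge 1$ over the course of the layer, a Wald-type identity gives $\mathbb{E}[\sigma_{r+1}-\sigma_r\mid\f_{\sigma_r}]\gtrsim P_{\sigma_r}$, and Corollary~\ref{cor:bouffetout} ensures $P_{\sigma_r}\to\infty$ a.s., so that a renewal inversion delivers $\mathbb{E}[H_n]/n\to 0$. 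The hardest step is the positive-drift stochastic minorization of $G$: a crude comparison with the centered half-plane walk $\nu$ of Section~\ref{sec:perimeter} yields only zero drift, because the upper bound $k$ on the $G$-decrement in a $\mathsf{R}_k$ or $\mathsf{L}_k$ event is attained only when the swallowed arc lies entirely in the distance-$(H_n+1)$ arc. Quantifying the typical split of the $k$ lost boundary vertices between the $U$-arc and the $G$-arc — which is governed by the position of the peeled edge at the interface of these two arcs — is the delicate combinatorial heart of the proof.
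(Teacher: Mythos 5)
Your outline is on the right track, but the proposal has a genuine gap --- one you flag yourself. The argument hinges on a stochastic minorization of $(G_{\sigma_r+j})_{j\ge 0}$ by a random walk with strictly positive drift, and you concede that ``a crude comparison with the centered half-plane walk $\nu$ of Section~\ref{sec:perimeter} yields only zero drift,'' leaving open how to ``quantify the typical split of the $k$ lost boundary vertices between the $U$-arc and the $G$-arc.'' There is in fact nothing probabilistic to quantify here: the split is determined by whether the event is of type $\mathsf{L}_k$ or $\mathsf{R}_k$. In the peeling by layers the peeled edge is always the one at the interface between the distance-$H_n$ arc (of length $U_n$) and the distance-$(H_n+1)$ arc (of length $G_n$); of the two equiprobable swallowing events $\mathsf{L}_k$ and $\mathsf{R}_k$ (each of conditional probability $q^{(P_n)}_k$), one starts consuming the $U$-arc and, short of triggering the layer change, leaves $G$ unchanged, while the other starts consuming the $G$-arc and sends $G$ to $(G_n-k)\vee 0$. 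This is precisely what the explicit Markov transition kernel~\eqref{transi} for $(P_n,G_n,H_n)$ encodes. Within a layer, $G$ therefore stochastically dominates a walk $Y$ whose jump law $\mu$ satisfies $\mu(1)=q_{-1}$, $\mu(-j)=q_j$ for $j\ge1$ --- only \emph{one} copy of each $q_j$ --- and $\mu(0)=1-q_{-1}-\sum_{j\ge1}q_j$; its drift is $q_{-1}-\sum_{j\ge1}jq_j=2/3-1/3=1/3>0$ by~\eqref{eq:sumZp} and~\eqref{eq:sumkZp}. Your comparison walk $\nu$ has $\nu(-j)=2q_j$ and is centered precisely because it double-counts; that is the source of the spurious zero drift. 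Without this observation the per-layer bound $C(L)$ on the $G$-sum is unsubstantiated and the argument does not close.

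Once the coupling is seen, the rest of your bookkeeping --- the per-layer bound, $\E[H_n]=o(n)$ via a Wald-type estimate, and the reduction of $\{U_i\le L\}$ to $\{G_{i+1}=0\}$ by a forced large swallow --- is plausible, and the $U$-to-$G$ reduction is essentially what the paper does. The paper's own design is tighter, however, and avoids any bound on $\E[H_n]$: it tracks the successive passage times of $(P_n,G_n,H_n)$ into $\{(p,\ell,h):p\ge N,\ \ell=k\}$, uses the $G$-coupling together with a companion minorization of $U$ by a walk with nonpositive jumps (so that emptying the $U$-arc, and hence completing the layer, takes time $\gtrsim N$) to show that each return time is $\ge\alpha N$ with uniformly positive probability, deduces that the occupation density of this set is $O(1/N)$, and removes the constraint $p\ge N$ by the transience of $(P_n)$. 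In your route the per-layer constants $C(L),C'(L)$ implicitly require $P_n$ large --- the lower bound on the layer-ending probability needed for $C'(L)$ degenerates when the perimeter is small --- so you would still need transience of $P$ to handle the small-perimeter times.
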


Let us postpone the proof of this lemma. To complete the proof of \eqref{conv-condi-means}, we first use the bound 
\eqref{compar1} to deduce from
the inequality {$\Delta A_n\leq |\Delta P_n| +1$} that, for every $n\geq 0$,
\begin{equation}
\label{unifo-bound}
\eta_n = \mathbb{E}[\Delta A_{n} \mid\f_n] \leq \mathbb{E}[|\Delta P_n| \mid \f_n] +1 \leq C_1,
\end{equation}
for some finite constant $C_1$. Furthermore, using \eqref{compar1} again, we have also, for every integer $L\geq 1$,
\begin{equation}
\label{large-bound}
\mathbb{E}[\Delta A_{n}\,\mathbf{1}_{\{|\Delta P_n|\geq L\}} \mid\f_n] \leq \mathbb{E}\big[{\big(|\Delta P_n|+1 \big)}\,\mathbf{1}_{\{|\Delta P_n|\geq L\}} \mid \f_n\big] \leq
c_{(L)}
\end{equation}
where the constants $c_{(L)}$ are such that $c_{(L)}\to 0$ as $L\to\infty$. Then, on the event $\{U_n\geq L,G_n\geq L\}$, the condition $|\Delta P_n|< L$ ensures that the only transitions of the peeling algorithm at step $n+1$ leading to
a positive value of $\Delta A_n$ are of type $\mathsf{R}_k$ for some $k$, and in that case $\Delta A_n=-\Delta P_n=k$. It follows that, still
on the event $\{U_n\geq L,G_n\geq L\}$,
 \begin{eqnarray} \label{eq:apparitionde1/3}\mathbb{E}[\Delta A_{n}\,\mathbf{1}_{\{|\Delta P_n|< L\}} \mid\f_n]= \sum_{k=1}^{L-1} k\,q^{(P_n)}_k
\leq \sum_{k=1}^\infty k\,q_k = \frac{1}{3}.  \end{eqnarray}
Note that we have $P_n\geq 2L$ on the event $\{U_n\geq L,G_n\geq L\}$. Since $q^{(p)}_k$ converges to $q_k$ 
as $p\to\infty$, the preceding considerations and \eqref{large-bound} entail that, for every $\ve>0$, we can fix $L_0>0$ so that, for every $L\geq L_0$ 
and every $n$, we have, on the event $\{U_n\geq L,G_n\geq L\}$,
\begin{equation}
\label{ideal-bound}
\frac{1}{3}-\ve \leq \mathbb{E}[\Delta A_{n}\mid\f_n] \leq \frac{1}{3}+\ve.
\end{equation}
Finally, we have, using \eqref{unifo-bound},
$$\Big|  \frac{1}{n}\sum_{i=0}^{n-1} \eta_{i} -  \frac{1}{n}\sum_{i=0}^{n-1}\mathbf{1}_{\{U_i\geq L,G_i\geq L\}}\,\mathbb{E}[\Delta A_{i}\mid\f_i]\Big|
\leq \frac{C_1}{n} \sum_{i=0}^{n-1}\mathbf{1}_{\{U_i\leq L\;{\rm or}\;G_i\leq L\}},$$
and we can now combine \eqref{ideal-bound} and Lemma \ref{ideal-envir} to get 
our claim \eqref{conv-condi-means}. This completes the proof of Proposition \ref{prop:A->1/3}, but we still have to prove
Lemma \ref{ideal-envir}. \endproof

\proof[Proof of Lemma \ref{ideal-envir}] We start with some preliminary observations.
From the definition of the peeling by layers, one easily checks that the
triple $(P_n,G_n,H_n)_{n\geq 0}$ is a Markov chain 
with respect to the filtration $(\f_n)$, taking values
in $\{(p,\ell,h)\in\Z^3: p\geq 2, 0\leq \ell\leq p-1, h\geq 0\}$, and whose transition
kernel $Q$ is specified as follows:
\begin{equation}
\label{transi}
\begin{array}{ll}
Q((p,\ell,h),(p+1,\ell+1,h))= q^{(p)}_{-1}\qquad&\\
Q((p,\ell,h),(p-k,\ell-k,h))= q^{(p)}_k\qquad&\hbox{for }1\leq k\leq \ell-1\\
Q((p,\ell,h),(p-k,\ell,h))= q^{(p)}_k\qquad&\hbox{for }1\leq k\leq p-\ell-1\\
Q((p,\ell,h),(p-k,0,h))= q^{(p)}_k\qquad&\hbox{for }\ell\leq k\leq p-2\\
Q((p,\ell,h),(p-k,0,h+1))= q^{(p)}_k\qquad&\hbox{for }p-\ell\leq k\leq p-2\,.
\end{array}
\end{equation}
The Markov chain $(P_n,G_n,H_n)_{n\geq 0}$ starts from the initial value
$(2,1,0)$. 

Obviously, the 
triple $(P_n,U_n,H_n)_{n\geq 0}$ is also a Markov chain, now with values
in $\{(p,\ell,h)\in\Z^3: p\geq 2, 1\leq \ell\leq p, h\geq 0\}$, and its transition
kernel $Q'$ is expressed by the formula analogous to \eqref{transi},
where only the first and the last two lines are different and replaced by
\begin{equation}
\label{transi2}
\begin{array}{ll}
Q'((p,\ell,h),(p+1,\ell,h))= q^{(p)}_{-1}\qquad&\\
Q'((p,\ell,h),(p-k,p-k,h+1))= q^{(p)}_k\qquad&\hbox{for }\ell\leq k\leq p-2\\
Q'((p,\ell,h),(p-k,p-k,h))= q^{(p)}_k\qquad&\hbox{for }p-\ell\leq k\leq p-2\,.
\end{array}
\end{equation}

We now fix $k\in\{0,1,\ldots,L\}$. We will prove that
\begin{equation}
\label{claim1}
\frac{1}{n} \sum_{i=0}^{n} \P(G_i=k)
\build{\la}_{n\to\infty}^{} 0.
\end{equation}
Let us explain why the lemma follows from \eqref{claim1}. If 
$k'\in\{1,\ldots,L\}$, a simple argument using the Markov chain
$(P_n,U_n,H_n)$ shows that, for every $i\geq 1$,
$$\P( G_{i+1}= 0 \mid \f_i) \geq q^{(P_i)}_{k'}\,\mathbf{1}_{\{U_i=k'\}}\,\mathbf{1}_{\{P_i\geq k'+2\}}$$
and therefore
$$\P( G_{i+1}= 0)\geq 
\beta \,\P(U_i=k',P_i\geq k'+2),$$
with a constant $\beta>0$ depending
on $k'$. If we assume that \eqref{claim1} holds for $k=0$, the latter bound (together with the transience
of the Markov chain $(P_n)$) implies that
\begin{equation}
\label{claim2}
\frac{1}{n} \sum_{i=0}^{n} \P(U_i=k')
\build{\la}_{n\to\infty}^{} 0.
\end{equation}
Clearly the lemma follows from \eqref{claim1} and \eqref{claim2}.

Let us prove \eqref{claim1}. Let $N\geq 1$, and write $T^N_1,T^N_2,\ldots$ for the successive
passage times of the Markov chain $(P_n,G_n,H_n)$
in the set $\{(p,\ell,h): p\geq N, \ell=k\}$. We claim
that  there exist two
positive constants $c$ and $\alpha$ 
(which depend on $k$ but not on $N$) such that, 
for every sufficiently large $N$ and for every 
integer $i\geq 1$,
\begin{equation}
\label{claim3}
\P[T^N_{i+1}-T^N_i \geq \alpha N\mid \f_{T^N_i}] \geq c.
\end{equation}
If the claim holds, simple arguments show that we have a.s.
$$\liminf _{j\to\infty} \frac{T^N_j}{j} \geq \alpha c N$$
and it follows that, a.s.,
$$\limsup_{n\to\infty} \frac{1}{n} \sum_{i=0}^n \mathbf{1}_{\{P_i\geq N, G_i=k\}}
\leq \frac{1}{\alpha c N}.$$
We can remove $P_i\geq N$ in the indicator function
since the Markov chain $(P_n)_{n\geq 0}$ is transient.
This gives \eqref{claim1} since $N$ can be taken arbitrarily large.

Let us verify the claim. Applying the strong Markov property at
time $T^N_i$ leads to a Markov chain $(\wt P_n,\wt G_n,\wt H_n)$
with transition kernel $Q$ but now started from some triple $(p_0,\ell_0,h_0)$
such that $p_0\geq N$ and $\ell_0=k$. We also set $\wt U_n=\wt P_n-\wt G_n$. 
The bound \eqref{claim3} reduces to finding two positive constants
$\alpha$ and $c$ such that, for every sufficiently large $N$,
\begin{equation}
\label{claim3reduced}
\P(\tau_k \geq \alpha N)\geq c,
\end{equation}
where $\tau_k=\min\{j\geq 1: \wt G_j=k\}$.
We set $\wt T:=\inf\{n\geq 0: \wt P_n=\wt U_n\}$, and observe that we have either
$\wt H_{\wt T}=h_0+1$ or $\wt H_{\wt T}= h_0$. 

By looking at the transition kernel $Q$ and using the bounds \eqref{compar1}
and \eqref{compar2}, we see that we can couple 
the Markov chain $(\wt P_n,\wt G_n,\wt H_n)$ with a random walk $(Y_n)$
started from $\ell_0=k$,
whose jump distribution $\mu$ is given by $\mu(1)= q_{-1}$, $\mu(-j)=q_{j}$
for every $j\geq 1$, and $\mu(0)=1-\mu(1)-\sum_{j\geq 1} \mu(-j)$, in such a 
way that
$$\wt G_n \geq Y_n\;, \hbox{ for every } 0\leq n<\wt T,$$
and on the event where $Y_{1}=k+1$ and $\min_{j\geq 1} Y_j=k+1$ we have $\wt H_{\wt T}=h_0+1$ (the point is that on the latter event, the transition corresponding
to the last line of \eqref{transi2} will not occur, at any time $n$ such that 
$0\leq n<\wt T$). Since the random walk $Y$ has a positive drift to $\infty$, the
latter event occurs with probability $c_0>0$.  We have thus obtained that
\begin{equation}
\label{key0}
\P(\{\wt G_n\geq k+1, \hbox{ for every } 1\leq n<\wt T\}
\cap \{ \wt H_{\wt T}=h_0+1\})\geq c_0.
\end{equation}

Next we observe that there is a positive constant $c_1$ such that,
for every $\ve>0$, we have, for all sufficiently large $N$,
\begin{equation}
\label{key1}
\P(\{\wt T \leq c_1(N-k)\}\cap\{ H_{\wt T}=h_0+1\}) < \varepsilon.
\end{equation}
To get this bound, we now consider the transition kernel $Q'$:
We use \eqref{compar1} to observe that we can couple $(\wt P_n,\wt U_n,\wt H_n)$
with a random walk $Y'$ started from $N-k$, with only nonpositive jumps distributed according
to $\mu'(-k)=q_{k}$ for every $k\geq 1$
(and of course $\mu'(0)=1-\sum_{k\geq 1}\mu'(-k)$), in such a way that
$$\wt U_n \geq Y'_n\;, \hbox{ for every } 0\leq n<\wt T,$$
and $Y'_{\wt T}\leq 0$ on the event $\{\wt H_{\wt T}=h_0+1\}$. 
In particular on the event $\{\wt H_{\wt T}=h_0+1\}$ the hitting time
of the negative half-line by $Y'$ must be smaller than or equal to
$\wt T$. Since $\mu'$ has a finite first moment, the law
of large numbers gives a constant $c_1$ such that \eqref{key1} holds.

By combining \eqref{key0}
and \eqref{key1}, and recalling the definition of $\tau_k$, we get
\begin{align*}
&\P(\tau_k\geq c_1(N-k))\\
&\quad \geq\P(\{\wt G_n\geq k+1, \hbox{ for every } 1\leq n<\wt T\}\cap\{ H_{\wt T}=h_0+1\})
- \P(\{\wt T \leq c_1(N-k)\}\cap\{ H_{\wt T}=h_0+1\})\\
& \quad\geq c_0-\varepsilon,
\end{align*}
Our claim \eqref{claim3reduced} now follows since we can choose $\varepsilon < c_0$. 
\endproof

\subsection{Distances in the peeling by layers}
\label{sec:dist-layers}

We need another lemma before we proceed to the proof of Proposition \ref{prop:scalinglayers}.

\begin{lemma}  \label{lem:H->0}
There exists a constant $C$ such that $\E[H_n]\leq C n^{1/3}$, for every $n\geq 1$. 
\end{lemma}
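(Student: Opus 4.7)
The plan is to combine an upper bound on $\E[A_n]$ coming from Proposition \ref{prop:A->1/3} with a deterministic lower bound on $A_n$ involving the sum of the successive hull perimeters.

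The starting point is the observation that, for every $1\leq r\leq H_n-1$, each of the $|\partial B^\bullet_r|$ edges of $\partial B^\bullet_r$ connects two vertices at distance $r$ from the root, so it does not belong to $\partial B^\bullet_{r+1}$ (whose edges connect vertices at distance $r+1$). Since $B^\bullet_{r+1}=\mathsf{T}_{\sigma_{r+1}}\subset\mathsf{T}_n$, these edges lie in $\mathsf{T}_n\setminus\partial\mathsf{T}_n$, and belong to $\mathscr{L}$ by definition, so that
$$A_n\;\geq\;\sum_{r=1}^{H_n-1}|\partial B^\bullet_r|\;=\;\sum_{r=1}^{H_n-1}P_{\sigma_r}.$$
On the other hand, using that $M_n=A_n-\sum_{i<n}\eta_i$ is a centred martingale (with $A_0=0$) and the uniform bound $\eta_n\leq C_1$ established in the proof of Proposition \ref{prop:A->1/3}, one immediately gets $\E[A_n]=\sum_{i<n}\E[\eta_i]\leq C_1 n$.

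To derive the claimed $n^{1/3}$ bound, I would then argue that the perimeters $P_{\sigma_r}$ grow like $r^2$ on average. Using the strong Markov property at $\sigma_r$, the layer-completion time $N_{r+1}:=\sigma_{r+1}-\sigma_r$ is distributed as the first-layer time of a fresh peeling of a UIPT of the $P_{\sigma_r}$-gon; since the corresponding $A$-process must have increased by at least $P_{\sigma_r}$ by time $N_{r+1}$ (the original boundary becomes interior), the same martingale argument as in Proposition \ref{prop:A->1/3} yields $\E[N_{r+1}\mid P_{\sigma_r}=p]\geq p/C_1$. Combined with an inductive analysis of the Markov chain $(P_{\sigma_r})_{r\geq 0}$ (whose drift is of order $\sqrt{P_{\sigma_r}}$ by the scaling heuristic), this gives $\E[P_{\sigma_r}]\geq c\,r^2$, and one concludes
$$\E[H_n^3]\;\leq\; C\,\E[A_n]\;\leq\; C' n,$$
from which $\E[H_n]\leq C'' n^{1/3}$ follows by Jensen's inequality.

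The main obstacle is the final step: the random index $H_n$ in $\sum_{r\leq H_n-1} P_{\sigma_r}$ is correlated with the perimeters themselves, so the termwise bound $\E[P_{\sigma_r}]\geq c r^2$ cannot be invoked directly to lower bound the sum. Handling this correlation requires either an optional-stopping argument on the Markov chain $(P_{\sigma_r})_{r\geq 0}$ (together with a concentration estimate for $\sigma_r$ around its mean, which is of order $r^3$), or an appeal to \cite[Theorem 1.4]{Kri04}, which gives the convergence in distribution of $r^{-2}|\partial B^\bullet_r|$ to a non-degenerate positive limit and can be upgraded to the $L^1$ statement needed here.
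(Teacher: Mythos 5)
Your proposal takes a genuinely different route from the paper, but it contains a real gap that you yourself flag and that is not easily repaired.

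The paper's argument introduces the interpolated height $H'_n = H_n + G_n/P_n$, computes the conditional increment directly from the transition kernel of $(P_n,G_n,H_n)$ to obtain $\E[\Delta H'_n\mid\f_n]\leq C'/P_n$, and then invokes Lemma~\ref{lem:1/P} to get $\E[\Delta H'_n]\leq C'' n^{-2/3}$; summing gives the result. This is a one-step martingale-increment argument that sidesteps any appeal to $A_n$, to $\sigma_r$, or to the law of $P_{\sigma_r}$, and it uses only tools already established at that point.

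Your first two observations are correct: the combinatorial inequality $A_n\geq\sum_{r=1}^{H_n-1}P_{\sigma_r}$ is valid (for $r\leq H_n-1$ the cycle $\partial B^\bullet_r$ is interior to $B^\bullet_{r+1}\subset\mathsf{T}_n$, and its edges lie in $\mathscr{L}$), and the bound $\E[A_n]\leq C_1 n$ follows from \eqref{unifo-bound} and the martingale decomposition. But the final step fails, and the failure is worse than a technicality. You want to pass from $\E\bigl[\sum_{r<H_n}P_{\sigma_r}\bigr]\leq C_1 n$ to $\E[H_n^3]\leq C'n$ via a lower bound of order $r^2$ on the perimeters. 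The obstacle is not merely that $H_n$ is a random index; it is that the correlation points in the wrong direction. On realizations where the perimeters $P_{\sigma_r}$ stay atypically small, the peeling needs fewer steps to turn around each layer, so $\sigma_r$ grows slowly and $H_n$ is atypically \emph{large}; on exactly these realizations the sum $\sum_{r<H_n}P_{\sigma_r}$ can stay small even though $H_n$ is large. So there is no pointwise or Wald-type inequality converting the bound on the sum into a bound on $\E[H_n^3]$, and a termwise estimate $\E[P_{\sigma_r}]\geq cr^2$ (which you also do not prove rigorously) cannot be inserted inside the random sum. Your suggested repairs do not close the gap as stated: upgrading Krikun's convergence of $r^{-2}|\partial B^\bullet_r|$ to an $L^1$ statement would again only give $\E[P_{\sigma_r}]\sim cr^2$ for fixed $r$, which runs into the same random-index/anticorrelation issue, and the ``optional stopping plus concentration of $\sigma_r$'' route is a substantial additional project that the proposal does not carry out. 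You would be better served by the paper's direct increment bound on $H'_n$, which uses Lemma~\ref{lem:1/P} (already proved in Section~\ref{sec:peeling}) and avoids these difficulties entirely.
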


\proof It will be convenient to introduce a process $H'_n$ which coincides with $H_n$ at times of 
the form $\sigma_r$, $r\geq 1$, but which ``interpolates'' $H_n$ on every interval
$[\sigma_r,\sigma_{r+1}]$. To be specific, we recall the notation introduced in
the proof of Lemma \ref{ideal-envir}, and we set for every $n\geq 0$,
$$H'_{n} = H_{n} + \frac{G_{n}}{P_{n}}.$$
From the form of the transition kernel of the Markov chain $(P_n,G_n,H_n)$ (see 
the proof of Lemma \ref{ideal-envir}), we get, for every triple $(p,\ell,h)$ such that $\P(P_{n}=p, G_{n}= \ell, H_n=h)>0$,
 \begin{eqnarray*} \mathbb{E}\big[|\Delta H'_{n}|\, \big| P_{n}=p, G_{n}= \ell, H_n=h \big] &=
 & q_{-1}^{(p)} \left| \frac{\ell+1}{p+1} - \frac{\ell}{p}\right| 
 + \sum_{k=1}^{p-2} q_{k}^{(p)} \left| \frac{(\ell-k) \vee 0}{p-k}- \frac{\ell}{p}\right|\\ 
 &+& \sum_{k=1}^{p-\ell-1} q_{k}^{(p)} \left| \frac{\ell }{p-k}- \frac{\ell}{p}\right| + \sum_{k=p-\ell}^{p-2}q_{k}^{(p)} \left(1 -\frac{\ell}{p}\right).  \end{eqnarray*}
 Then it is not hard to verify that each term in the right-hand side is bounded above by $c/p$, with some constant
 $c$ independent of $(p,\ell,h)$. Indeed, writing $c$ for a constant that may vary from line to line,
 and using \eqref{compar1}, we have
 $$q_{-1}^{(p)} \left| \frac{\ell+1}{p+1} - \frac{\ell}{p}\right| \leq \frac{1}{p+1},$$
 and similarly,
 \begin{align*}
 &\sum_{k=1}^{\ell} q_{k}^{(p)} \left| \frac{\ell-k}{p-k} - \frac{\ell}{p}\right|=\sum_{k=1}^\ell q_{k}^{(p)}  \frac{k(p-\ell)}{p(p-k)}
 \leq \frac{1}{p} \sum_{k=1}^\infty k\,q_k = \frac{c}{p},\\
&\sum_{k=\ell+1}^{p-2} q_{k}^{(p)}\,\frac{\ell}{p}\leq \frac{1}{p} \sum_{k=\ell+1}^\infty k\,q_k\leq \frac{c}{p},\\
&\sum_{k=1}^{p-\ell-1} q_{k}^{(p)} \left| \frac{\ell }{p-k}- \frac{\ell}{p}\right| 
 = \sum_{k=1}^{p-\ell-1} q_{k}^{(p)} \frac{\ell k}{p(p-k)} \leq \sum_{k=1}^{p-\ell-1} q_{k}^{(p)} \frac{k}{p} \leq \frac{c}{p},\\
&\sum_{k=p-\ell}^{p-2} q_{k}^{(p)}\left(1 -\frac{\ell}{p}\right)\leq \left(1 -\frac{\ell}{p}\right) \sum_{k=p-\ell}^\infty q_k \leq \left(1 -\frac{\ell}{p}\right)
 \times c\,(p-\ell)^{-3/2} = \frac{c}{p}\,(p-\ell)^{-1/2}.
 \end{align*}
We conclude that there exists a constant $C'$ such that $\E[\Delta H'_n\mid \f_n]\leq C'/P_n$. By Lemma \ref{lem:1/P}, we
have then $\E[\Delta H'_n]\leq C''n^{-2/3}$ with some other constant $C''$. It follows that 
$\E[H'_n]\leq C'''n^{1/3}$, giving the bound of the lemma since $H_n\leq H'_n$. 
\endproof

\proof[Proof of Proposition \ref{prop:scalinglayers}] It follows from Theorem \ref{thm:scalingpeeling} and
Proposition \ref{prop:A->1/3}, together with monotonicity arguments for the last component, that we have the joint convergence in distribution 
\begin{equation}
\label{joint-layer}
\Big(n^{-2/3}P_{[nt]}, n^{-4/3}V_{[nt]}, n^{-1} A_{[nt]}\Big)_{t\geq 0}
\xrightarrow[n\to\infty]{(d)} (\mathsf{p}_\two\,S^+_t,\mathsf{v}_\two\,Z_t,\mathsf{a}_\two\,t)_{t\geq 0}
\end{equation}
in the Skorokhod sense.
We now need to deal with the convergence of the 
(rescaled) process $H$. We first note that by construction we have 
 $A_{\sigma_{r+1}}-A_{\sigma_r}= P_{\sigma_r}$ for every $r\geq 1$. More precisely, for every
 $r\geq 1$ and every $n$ with $\sigma_r\leq n<\sigma_{r+1}$, we have
 \begin{align*}
 &A_{\sigma_{r+1}}-A_n= U_n\leq P_n,\\
 &A_n-A_{\sigma_r}= P_{\sigma_r}-U_n\leq P_{\sigma_r}.
 \end{align*}
 It easily follows that, for every $
 0 \leq n_{1} \leq n_{2}$, we have 
  \begin{eqnarray} \label{eq:lowerH}
  \frac{A_{n_{2}}-A_{n_{1}}}{\max_{ n_{1} \leq i \leq n_{2}} P_{i}}\leq H_{n_{2}}-H_{n_{1}}+1,
  \end{eqnarray}
and
 \begin{eqnarray} \label{eq:upperH} H_{n_{2}}-H_{n_{1}} \leq \frac{A_{n_{2}}-A_{n_{1}}}{\min_{ n_{1} \leq i \leq n_{2}} P_{i}} +1.  
 \end{eqnarray}
 
Fix $0<s<t$. By \eqref{joint-layer},
 $$n^{-2/3} \min_{[ns]\leq k\leq [nt]} P_k \xrightarrow[n\to\infty]{(d)} \mathsf{p}_\two \,\inf_{s\leq u\leq t} S^+_u,$$
 and the limit is a (strictly) positive random variable. Using also 
 Proposition \ref{prop:A->1/3}, we then deduce from the bound \eqref{eq:upperH} that the sequence
$n^{-1/3} (H_{[nt]}-H_{[ns]})$
 is tight. Hence we can assume that along a suitable subsequence, for every integer $k\geq 0$,
 for every $1\leq i\leq 2^k$, we have the convergence in distribution
 \begin{equation}
\label{conv-H1}
n^{-1/3}\Big( H_{[n(s+i2^{-k}(t-s))]} - H_{[n(s+(i-1)2^{-k}(t-s))]}\Big) 
\xrightarrow[n\to\infty]{(d)} \Lambda^{(s,t)}_{k,i}
\end{equation}
where $\Lambda^{(s,t)}_{k,i}$ is a nonnegative random variable. Moreover, we can 
assume that the convergences \eqref{conv-H1} hold jointly, and jointly with \eqref{joint-layer}. 
It then follows from the bounds \eqref{eq:lowerH} and \eqref{eq:upperH} that, for every
$k$ and $i$,
$$\frac{ \mathsf{a}_{\two}}{ \mathsf{p}_{\two}}\,\frac{2^{-k}(t-s)}{\displaystyle \sup_{s+(i-1)2^{-k}(t-s)\leq u\leq s+i2^{-k}(t-s)} S^+_u}
\leq \Lambda^{(s,t)}_{k,i}
\leq \frac{ \mathsf{a}_{\two}}{ \mathsf{p}_{\two}}\,\frac{2^{-k}(t-s)}{\displaystyle \inf_{s+(i-1)2^{-k}(t-s)\leq u\leq s+i2^{-k}(t-s)} S^+_u}.
 $$
Note that   $ \mathsf{a}_{\two}/ \mathsf{p}_{\two}=12^{-1/3}=:\mathsf{h}_\two$. By summing over 
$i$, we get
 $$\mathsf{h}_\two\,\sum_{i=1}^{2^k}\frac{2^{-k}(t-s)}{\displaystyle \sup_{s+(i-1)2^{-k}(t-s)\leq u\leq s+i2^{-k}(t-s)} S^+_u}
\leq \Lambda^{(s,t)}_{0,1}
\leq\mathsf{h}_\two\, \sum_{i=1}^{2^k} \frac{2^{-k}(t-s)}{\displaystyle \inf_{s+(i-1)2^{-k}(t-s)\leq u\leq s+i2^{-k}(t-s)} S^+_u}.
 $$
When $k\to\infty$, both the right-hand side and the left-hand-side of the previous display converge a.s. to
$$\mathsf{h}_\two\int_s^t \frac{\mathrm{d}u}{S^+_u}.$$
This argument (and the fact that the limit does not depend on the chosen subsequence) thus gives 
\begin{equation}
\label{conv-H2}
n^{-1/3} (H_{[nt]}-H_{[ns]}) \xrightarrow[n\to\infty]{(d)} \mathsf{h}_\two \int_s^t \frac{\mathrm{d}u}{S^+_u},
\end{equation}
and this convergence holds jointly with \eqref{joint-layer}.

At this point, we use Lemma \ref{lem:H->0}, which tells us that $\E[n^{-1/3} H_{[ns]}]$ can be made arbitrarily small,
uniformly in $n$, by choosing $s$ small. Also Lemma \ref{lem:H->0}, \eqref{conv-H2} and Fatou's lemma  imply that 
$$\E\Big[ \int_s^t \frac{\mathrm{d}u}{S^+_u}\Big]$$
is bounded above independently of $s\in(0,t]$, and therefore $\int_0^t \frac{\mathrm{d}u}{S^+_u}<\infty$ a.s. (we could have obtained 
this more directly). Letting $s \to 0$, we deduce from the previous considerations that 
\begin{equation}
\label{conv-H3}
n^{-1/3} H_{[nt]} \xrightarrow[n\to\infty]{(d)} \mathsf{h}_\two \int_0^t \frac{\mathrm{d}u}{S^+_u},
\end{equation}
jointly with \eqref{joint-layer}. The statement of Proposition \ref{prop:scalinglayers} now follows from monotonicity
arguments using the fact that the limit in \eqref{conv-H3} is continuous in $t$. \endproof

\subsection{From Proposition \ref{prop:scalinglayers} to Theorem \ref{thm:scalinghull}}
\label{sec:lamperti}

In this section, we deduce Theorem \ref{thm:scalinghull} from Proposition \ref{prop:scalinglayers} via a time
change argument. We start with some preliminary observations.

We fix $x>0$ and write $(\Gamma^x_t)_{t\geq 0}$ for the stable L\'evy process with index $3/2$ and no negative jumps
started from $x$, whose distribution is characterized by the formula
$$\E[\exp(-\lambda (\Gamma^x_t-x))]= \exp(\lambda t^{3/2})\;,\qquad \lambda,t\geq 0.$$
Equivalently, $\Gamma^x_t=x-S_t$ where $S_t$ is as in the introduction. Set $\gamma_x:=\inf\{t\geq 0:\Gamma^x_t=0\}$. Then $\gamma_x<\infty$ a.s., and a classical 
time-reversal theorem (see e.g. \cite[Theorem VII.18]{Ber96}) states that the law of
$(\Gamma^x_{(\gamma_x-t)-})_{0\leq t\leq \gamma_x}$  (with $\Gamma^x_{0-}=x$) coincides with the law of 
$(S^+_t)_{0\leq t\leq \rho_x}$, where $\rho_x:=\sup\{t\geq 0:S^+_t=x\}$. 

On the other hand, consider the process $\mathcal{L}$ of Section 1. If $\lambda_x:=\sup\{t\geq 0: \mathcal{L}_t\leq x\}$, then 
$\lambda_x<\infty$ a.s. and setting $X^x_t=\mathcal{L}_{(\lambda_x-t)-}$ 
for $0\leq t\leq \lambda_x$ (with $\mathcal{L}_{0-}=0$), the process $(X^x_t)_{0\leq t\leq \lambda_x}$ is distributed as
the continuous-state branching process with branching mechanism $\psi(u)=u^{3/2}$ started from $x$
and stopped when it hits $0$. See \cite[Section 2.1]{CLGHull} for more details. 

The classical Lamperti transformation asserts that, if we set
$$\tau^x_t:=\inf\{s\geq 0: \int_0^s \frac{\mathrm{d}u}{\Gamma^x_u} \geq t\}$$
for $0\leq t \leq R_x:=\int_0^{\gamma_x} \frac{\mathrm{d}u}{\Gamma_u^x} $, the time-changed process
$(\Gamma^x_{\tau^x_t})_{0\leq t\leq R_x}$ has the same distribution as $(X^x_t)_{0\leq t\leq \lambda_x}$. 
We can then combine the Lamperti transformation with the preceding observations to obtain that, if
$$\eta_t:=\inf\{s\geq 0: \int_0^s \frac{\mathrm{d}u}{S^+_u}\geq t\},$$
for every $t\geq 0$, the process 
$$\Big(S^+_{\eta_t}, 0\leq t\leq \int_0^{\rho_x} \frac{\mathrm{d}u}{S^+_u}\Big)$$
has the same distribution as $(\mathcal{L}_t)_{0\leq t\leq \lambda_x}$. Since this holds for
every $x>0$, we conclude that the processes $(S^+_{\eta_t})_{t\geq 0}$
and $(\mathcal{L}_t)_{t\geq 0}$ have the same distribution. It easily follows that we have also
\begin{equation}
\label{ident-Lamperti}
\Big(S^+_{\eta_t},Z_{\eta_t}\Big)_{t\geq 0}\build{=}_{}^{(d)} (\mathcal{L}_t,\mathcal{M}_t)_{t\geq 0},
\end{equation}
with the notation of Section 1.

Let us turn to the proof of Theorem \ref{thm:scalinghull}. We recall that, for every integer $r\geq 1$,
we have $|\partial B^\bullet_r|=P_{\sigma_r}$ and $|B^\bullet_r|=V_{\sigma_r}$, with
$\sigma_r=\min\{n: H_n\geq r\}$. We use the convergence in distribution of Proposition \ref{prop:scalinglayers}
and the Skorokhod representation theorem to find, for every $n\geq1$, 
a triple $(P^{(n)},V^{(n)},H^{(n)})$ having the same distribution as $(P,V,H)$, in such a way that we now have 
the almost sure convergence
\begin{equation}
\label{a.s.scalinghull}
\Bigg( \frac{P^{(n)}_{[nt]}}{ \mathsf{p}_{\two} \cdot n^{2/3}},\frac{V^{(n)}_{[nt]}}{ \mathsf{v}_{\two} \cdot n^{4/3}},
\frac{H^{(n)}_{[nt]}}{ \mathsf{h}_{\two}\cdot n^{1/3}}\Bigg)_{t \geq 0}   \xrightarrow[n\to\infty]{a.s.} \left(S^+_{t}, Z_{t},\int_{0}^t \frac{ \mathrm{d}u}{S_{u}^+}\right)_{t \geq 0},
\end{equation}
for the Skorokhod topology. For every $n\geq 1$, and every $r\geq 1$, set
$$\sigma^{(n)}_r=\min\{k: H^{(n)}_k\geq r\}.$$
Then it easily follows from \eqref{a.s.scalinghull} that
$$\Big(\frac{1}{n} \sigma^{(n)}_{[n^{1/3}t]}\Big)_{t\geq 0}\xrightarrow[n\to\infty]{a.s.} (\eta_{t/\mathsf{h}_{\two}})_{t\geq 0},$$
uniformly on every compact time set. By combining the latter convergence with \eqref{a.s.scalinghull} we arrive at
the a.s. convergence in the Skorokhod sense,
$$\Big(n^{-2/3} P^{(n)}_{\sigma^{(n)}_{[n^{1/3}t]}}, n^{-4/3} V^{(n)}_{\sigma^{(n)}_{[n^{1/3}t]}}\Big)_{t\geq 0}
\xrightarrow[n\to\infty]{a.s.} \Big(\mathsf{p}_{\two} S^+_{\eta_{t/\mathsf{h}_{\two}}}, \mathsf{v}_{\two} Z_{\eta_{t/\mathsf{h}_{\two}}}\Big)_{t\geq 0}.
$$
Recalling the identity in distribution \eqref{ident-Lamperti}, we get the
convergence in distribution of Theorem \ref{thm:scalinghull} since 
$$(P^{(n)}_{\sigma^{(n)}_r}, V^{(n)}_{\sigma^{(n)}_r})_{r\geq 0} \build{=}_{}^{(d)}  (P_{\sigma_r}, V_{\sigma_r})_{r\geq 0}=
(|\partial B^\bullet_r|,|B^\bullet_r|)_{r\geq 0}.$$
This completes the proof. \endproof

\section{Application to other distances}

In this section, we apply our techniques to study other distances on the UIPT (in fact on the dual graph of the UIPT) in order to get similar results for the scaling limits of the associated hull processes. Specifically, we will consider the dual graph distance and the first-passage percolation distance with exponential edge weights on the dual graph.
\subsection{The dual graph distance}

\label{sec:layers-dual}

We consider the dual map 
of the UIPT, whose vertices are in one-to-one correspondence with the faces of the UIPT, and each 
edge $e$ of the UIPT corresponds to an edge of the dual map between
the two faces incident to $e$. This dual map is denoted by $T_{\infty}^*$. By convention, the root vertex of $T_{\infty}^*$ 
or root face is the face incident to
the right-hand side of the root edge of the UIPT. We denote the graph distance on $T_\infty^*$  or dual graph distance
by $ \mathrm{d}_{ \mathrm{gr}}^*$. For every integer $r\geq 0$, we let $B_{r}^{\bullet,*}(T_{\infty})$ denote the hull of the ball of radius $r$ for $ \mathrm{d}_{ \mathrm{gr}}^*$.
This is the union of all faces of $T_{\infty}$ that are at dual graph distance smaller than or equal to $r$ from the root face, together with the finite regions these faces may enclose.

Similarly as in the previous section we now design a peeling algorithm which discovers these dual hulls step by step. In the first step ($n=0$) we reveal the root face. In the second step ($n=1$), we peel any edge incident to the root face. Then
inductively at step $n+1$ we peel the edge of the boundary of  $ \mathsf{T}_{n}$ which lies immediately on the right of the last revealed triangle (but not incident to that triangle). See Fig.~\ref{fig:layers-dual} for an illustration.\\

\begin{figure}[!h]
 \begin{center}
 \includegraphics[width=16cm]{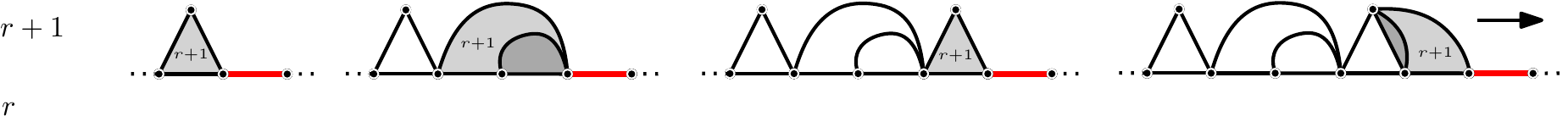}
 \caption{Illustration of the peeling by layers on the dual map. When $B^{\bullet,*}_{r}(T_{\infty})$ has been discovered, we turn around the boundary $\partial B_{r}^{\bullet,*}(T_{\infty})$ from left to right in order to reveal the next layer and obtain $B_{r+1}^{\bullet,*}(T_{\infty})$. \label{fig:layers-dual}}
 \end{center}
 \end{figure}
\vspace{-5mm}
As in the case of the peeling by layers for the graph distance on the primal lattice, one can prove by induction that, for every $n\geq 0$, there is an integer $h\geq 0$ such that one and only one of the following two possibilities occurs. Either all faces incident to $\partial \mathsf{T}_{n}$ are at the same dual graph distance $h$ from the root face of the UIPT. Or  $\partial \mathsf{T}_{n}$ contains both edges incident to faces at dual distance $h$ and 
edges incident to faces at dual distance $h+1$ from the root face. In the last case, these edges form two connected subsets of the boundary and  the edge that will be ``peeled off'' at step $n+1$ is the only edge incident to a face in $ \mathsf{T}_{n}$ at dual distance $h$ such that the edge immediately on its left is incident to a face of $ \mathsf{T}_{n}$ at dual distance $h+1$. In both cases we write $H^*_n=h$. As in the previous sections, we let $P_{n}$ and $V_{n}$ stand respectively for the perimeter and for the volume of the triangulation discovered after $n$ peeling steps.

\begin{proposition}[Distances in the peeling by layers on the dual map]  \label{prop:scalinglayersdual} We have the following convergence in distribution for the Skorokhod topology 
 \begin{eqnarray*} \left(  \frac{P_{[nt]}}{ \mathsf{p}_{\two} \cdot n^{2/3}}, \frac{V_{[nt]}}{  \mathsf{v}_{\two} \cdot  n^{4/3}}, \frac{H^*_{[nt]}}{ \mathsf{h}^*_{\two}\cdot n^{1/3}}\right)_{t \geq 0}  &\xrightarrow[n\to\infty]{(d)}&  \left(S_{t}^+, Z_{t},\int_{0}^t \frac{ \mathrm{d}u}{S_{u}^+} \right)_{t \geq 0},  \end{eqnarray*}
 where $ \mathsf{h}^*_{\two}=   ({1+  \mathsf{a}_{\two}})/{ \mathsf{p}_{\two}} = (16/3)^{1/3}$.
\end{proposition}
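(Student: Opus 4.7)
The plan is to follow the proof of Proposition~\ref{prop:scalinglayers} from Section~\ref{sec:dist-layers} in the dual setting. Since the dual peeling by layers is a peeling algorithm in the sense of Section~\ref{sec:peeling}, Theorem~\ref{thm:scalingpeeling} already delivers the joint convergence of the first two coordinates $(P_{[nt]}/(\mathsf{p}_{\two} n^{2/3}), V_{[nt]}/(\mathsf{v}_{\two} n^{4/3}))$ to $(S^+_t, Z_t)$. The remaining task is to handle the third coordinate $H^*_{[nt]}/n^{1/3}$, which will be done by counting ``dual layer edges''.

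Set $\mathscr{L}^* := \bigcup_{r \geq 1} \partial B^{\bullet,*}_r(T_\infty)$ and, in parallel with Section~\ref{sec:turn-layers}, define $A^*_n$ to be the number of edges of $\mathscr{L}^*$ lying in $\mathsf{T}_n \setminus \partial \mathsf{T}_n$. The central analytical step is the dual analog of Proposition~\ref{prop:A->1/3}, namely
\[ \frac{A^*_n}{n} \xrightarrow[n\to\infty]{(P)} \mathsf{a}^*_{\two} := 1 + \mathsf{a}_{\two} = \tfrac{4}{3}. \]
The extra $+1$ compared to the primal case has a clean interpretation: in the primal peeling by layers the peeled edge $e_n$ connects vertices at distinct distances from the root and is therefore never in $\mathscr{L}$, so only $\mathsf{R}_k$-type events contribute to $\Delta A_n$, with rate $\sum_{k \geq 1} k q_k = \mathsf{a}_{\two} = 1/3$. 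In the dual peeling by layers, by the very construction of the algorithm, the unique face of $\mathsf{T}_n$ incident to $e_n$ is at dual distance exactly $H^*_n$, so that the revealed face lies at dual distance $H^*_n + 1$ and $e_n \in \mathscr{L}^*$ deterministically at every step, contributing a constant $+1$ to $\Delta A^*_n$. On top of this, the $\mathsf{R}_k$-type events contribute the same average $\sum_{k \geq 1} k q_k = 1/3$ as in the primal case, and the total rate is $1 + 1/3 = 4/3$. The $L^2$-martingale argument of Proposition~\ref{prop:A->1/3}, together with a dual analog of Lemma~\ref{ideal-envir} ruling out pathological boundary configurations, adapts with purely cosmetic changes.

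Once this convergence is secured, the geometric identity $A^*_{\sigma^*_{r+1}} - A^*_{\sigma^*_r} = P_{\sigma^*_r}$, where $\sigma^*_r := \min\{n \geq 0 : H^*_n = r\}$, holds by the same combinatorial argument as in the primal case: the only edges of $\mathscr{L}^*$ that pass from the boundary to the interior during $[\sigma^*_r, \sigma^*_{r+1}]$ are the $P_{\sigma^*_r}$ edges of $\partial B^{\bullet,*}_r$. This yields the dual analogs of \eqref{eq:lowerH}--\eqref{eq:upperH}, and combining with $A^*_n/n \to \mathsf{a}^*_{\two}$ and Proposition~\ref{prop:positive}, the subsequence-plus-Riemann-sum argument of Section~\ref{sec:dist-layers} gives
\[ \frac{H^*_{[nt]}}{n^{1/3}} \xrightarrow[n\to\infty]{(d)} \frac{\mathsf{a}^*_{\two}}{\mathsf{p}_{\two}} \int_0^t \frac{\mathrm{d}u}{S^+_u} = \mathsf{h}^*_{\two} \int_0^t \frac{\mathrm{d}u}{S^+_u}, \]
jointly with the first two coordinates. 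A uniform bound $\mathbb{E}[H^*_n] \leq C n^{1/3}$, needed to handle the behavior at $t = 0$, is obtained as in Lemma~\ref{lem:H->0}, by estimating a one-step mean for a suitable interpolated version of $H^*_n$ together with Lemma~\ref{lem:1/P}.

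The main obstacle is setting up the Markov chain tracking $(P_n, H^*_n)$ together with the lengths of the two portions of $\partial \mathsf{T}_n$ incident to inside faces at dual distances $H^*_n$ and $H^*_n + 1$ respectively (the dual analogs of $U_n$ and $G_n$): the case analysis for its transition kernel is lengthier than in \eqref{transi}--\eqref{transi2}, since the revealed triangle and any swallowed boundary portions may interact with both the current and the next layer. Once this bookkeeping is in place, the coupling-with-random-walk argument of the proof of Lemma~\ref{ideal-envir} transfers with only cosmetic changes, and the proof concludes.
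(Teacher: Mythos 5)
Your proposal is correct and follows essentially the same route as the paper: Theorem~\ref{thm:scalingpeeling} for the first two components, the counting process $A^*_n$ with the key convergence $A^*_n/n\to 1+\mathsf{a}_{\two}=4/3$ (an analog of Proposition~\ref{prop:A->1/3}), the Markov chain $(P_n,G^*_n,H^*_n)$ with a lengthier transition kernel in place of \eqref{transi}--\eqref{transi2}, a dual analog of Lemma~\ref{ideal-envir}, the identity $A^*_{\sigma^*_{r+1}}-A^*_{\sigma^*_r}=P_{\sigma^*_r}$ feeding the Riemann-sum/subsequence argument, and an $\E[H^*_n]\leq Cn^{1/3}$ bound as in Lemma~\ref{lem:H->0}. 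If anything, your bookkeeping for the extra $+1$ (the peeled edge is in $\mathscr{L}^*$ at \emph{every} step, regardless of event type, contributing a deterministic $1$, while $\mathsf{R}_k$ events add the usual $k$) is stated a bit more precisely than the paper's informal phrasing, but it leads to exactly the same rate $4/3$.
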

Theorem \ref{thm:scalinghulldual} is derived from Proposition \ref{prop:scalinglayersdual} in exactly 
the same way as Theorem \ref{thm:scalinghull} is derived from Proposition \ref{prop:scalinglayers} in Section \ref{sec:lamperti}. 
Let us briefly discuss the proof of Proposition \ref{prop:scalinglayersdual}, which follows the same lines as that of Proposition \ref{prop:scalinglayers}. The convergence of the first two components is again a consequence of Theorem \ref{thm:scalingpeeling}, and we focus on the convergence of the third component.  As for the peeling by layers on the primal lattice, the key idea is to consider the speed at which the peeling by layers (on the dual 
map) ``turns'' around the boundary. More precisely we denote the set of all edges of $T_{\infty}$ that are part of $ B_{r}^{\bullet,*}(T_{\infty})$ for some $r \geq 0$ by $ \mathcal{L}^*$, and we let $A_{n}^*$
stand for the number of edges of $ \mathsf{T}_{n} \backslash \partial \mathsf{T}_{n}$ that belong to $ \mathcal{L}^*$. We aim at the following analog of Proposition \ref{prop:A->1/3}:
  \begin{eqnarray}   
  \label{analoProp13}
  \frac{A_{n}^*}{n} \xrightarrow[n\to\infty]{(P)}  \mathsf{a}_{\two} +1 = 4/3.\end{eqnarray}
The idea to prove this convergence is the same as before: For most times $n$, the boundary $\partial \mathsf{T}_{n} $ has both a large number of
edges incident to a face of $\mathsf{T}_{n}$ at dual distance $H^*_{n}+1$ from the root face, and  a large number of
edges incident to a face of $\mathsf{T}_{n}$ at dual distance $H^*_n$. Then, except on a set of small probability, the only events  leading to a nonzero value of $\Delta A^*_n$ are events of type $\mathsf{R}_k$, for which  \begin{eqnarray*} \Delta A_n=-\Delta P_n +1= k+1.  \end{eqnarray*}
Note the additional  term $+1$ in the last display (compare with \eqref{eq:heuristicdeltaAn}) coming from the fact that we peel an edge belonging to $ \mathcal{L}^*$ at every step. This 
additional term explains why we get the limit $\mathsf{a}_{\two} +1$ in \eqref{analoProp13}, instead of $\mathsf{a}_{\two}$ in Proposition \ref{prop:A->1/3}. Apart from this difference, the technical 
details of the proof of \eqref{analoProp13} are very similar to those of Proposition \ref{prop:A->1/3}. For the analog of 
Lemma \ref{ideal-envir}, we introduce the number $U_{n}^*$ of edges of $ \partial \mathsf{T}_{n}$ that are incident to a face of $\mathsf{T}_{n}$ at dual distance $H_{n}^*$ from the root face, and $G_{n}^* = P_{n}-U_{n}^*$. Then $(P_{n},G^*_{n},H^*_{n})_{n \geq 0}$  is a Markov chain 
taking values
in $\{(p,\ell,h)\in\Z^3: p\geq 2, 0\leq \ell\leq p-1, h\geq 0\}$, whose transition
kernel $Q^*$ is specified as follows:
\begin{equation}
\label{transi-dual}
\begin{array}{ll}
Q^*((p,\ell,h),(p+1,\ell+2,h))= q^{(p)}_{-1}\qquad&\hbox{if }\ell\leq p-2\\
Q^*((p,p-1,h),(p+1,0,h+1))= q^{(p)}_{-1}\qquad&\\
Q^*((p,\ell,h),(p-k,\ell-k+1,h))= q^{(p)}_k\qquad&\hbox{for }1\leq k\leq \ell\\
Q^*((p,\ell,h),(p-k,\ell +1 ,h))= q^{(p)}_k\qquad&\hbox{for }1\leq k\leq p-\ell-2\\
Q^*((p,\ell,h),(p-k,1,h))= q^{(p)}_k\qquad&\hbox{for }\ell+1\leq k\leq p-2\\
Q^*((p,\ell,h),(p-k,0,h+1))= q^{(p)}_k\qquad&\hbox{for }p-\ell-1\leq k\leq p-2\,.
\end{array}
\end{equation}
The analog of Lemma \ref{ideal-envir} then holds with $G_i$ and $U_i$ replaced respectively by $G^*_i$ and $P^*_i$,
with a very similar proof. This provides the key technical ingredient needed to adapt the proof of Proposition \ref{prop:A->1/3}
in order to get the convergence \eqref{analoProp13}. Finally, an analog of Lemma \ref{lem:H->0} also holds with
$\E[H_n]$ replaced by $\E[H_n^*]$, and Proposition \ref{prop:scalinglayersdual} can then be derived from
\eqref{analoProp13} in the same way as Proposition \ref{prop:scalinglayers} was derived from 
Proposition \ref{prop:A->1/3} in Section \ref{sec:dist-layers}. We leave the details to the reader. 

\subsection{First-passage percolation}

\label{sec:fpp}

We now assign independent weights exponentially distributed with parameter $1$ to the edges of $T^*_\infty$. The weight of a path in $T^*_\infty$ is just the
sum of the weights of its edges. We let $\mathsf{F}_0$ consist only of the root 
face and, 
for every $t >0$, we let $ \mathsf{F}_{t}$ be the union of all faces of the UIPT 
which are connected to the root face by a dual path whose weight is less than or equal to $t$. We then let
$\mathsf{F}^\bullet_{t}$ be the hull of $ \mathsf{F}_{t}$. We set $\tau_0=0$ and we let $0<\tau_{1} < \tau_2<\cdots < \tau_{n} <\cdots$ be the successive jump times of the process $ { \mathsf F}^\bullet_t$ (a simple argument shows that $\tau_n\to\infty$ as 
$n\to\infty$, which will also follow from the next proposition). Note
that, at each time $\tau_n$ with $n\geq 1$, a new triangle
incident to the boundary of $\mathsf{F}^\bullet_{\tau_{n-1}}$ is added to
 $ { \mathsf F}^\bullet_{\tau_{n-1}}$, together with the triangles in the 
 ``hole'' that this addition may create.

By convention we
let $\tilde{\mathsf{F}}_0$ be the trivial triangulation, and we set, for every $n \geq 1$ 
$$ \tilde{ \mathsf{F}}_{n} =  \mathsf{F}^\bullet_{\tau_{n-1}}.$$ The following
proposition shows that the process $(\tilde{\mathsf F}_{n})_{n\geq 0}$
is a particular instance of a peeling process, which is called the
 \emph{uniform} peeling process or Eden model on the UIPT.
 See also \cite[Section 6]{AB14}.
 
\begin{proposition}  \label{prop:fpppeeling} The sequence $ (\tilde{ \mathsf{F}}_{n})_{n \geq 0}$ has the same law as the sequence $(\mathsf{T}_n)_{n\geq 0}$ corresponding to a peeling process where at step $1$ we reveal the
triangle incident to the right-hand side of the root edge, and for every 
$n\geq 2$,  conditionally on
$\mathsf{T_0},\ldots,\mathsf{T}_{n-1}$, the peeled edge at step $n$ is chosen  uniformly at random among the edges
of $\partial\mathsf{T}_{n-1}$.
Furthermore, conditionally on the sequence $(\tilde{\mathsf{F}}_n)_{n\geq 1}$,
the increments $\tau_1-\tau_0,\tau_2-\tau_1,\tau_3-\tau_2,\ldots$ are
independent, and, for every $k\geq 1$, 
$\tau_{k}-\tau_{k-1}$ is exponentially distributed with parameter $ | \partial \tilde{\mathsf{F}}_{k}|$.
\end{proposition}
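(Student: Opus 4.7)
The approach combines two classical ingredients: the memoryless property of exponential clocks (the standard reason why first-passage percolation with exponential weights is an ``Eden model'' in disguise) and the spatial Markov property of the UIPT.

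First I would check that $(\tilde{\mathsf{F}}_{n})_{n\geq 0}$ is indeed a peeling process in the sense of Section~\ref{sec:peeling}. Since the weights $(\omega_{e})$ are a.s.\ pairwise distinct, the set $\mathsf{F}_{t}$ gains exactly one face at each jump time $\tau_{k}$: this face is incident to a unique edge $e_{k}\in\partial \mathsf{F}^\bullet_{\tau_{k-1}}=\partial\tilde{\mathsf{F}}_{k}$, and $\tilde{\mathsf{F}}_{k+1}$ is obtained from $\tilde{\mathsf{F}}_{k}$ by adding the triangle of $T_{\infty}$ incident to $e_{k}$ from outside together with the possibly enclosed finite region, which the hull operator $\mathsf{F}\mapsto\mathsf{F}^\bullet$ absorbs instantaneously. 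This is exactly one peeling step with peeled edge $e_{k}$, so the only point to prove is that the sequence $(e_{k})$ behaves as in the uniform peeling and that the $\tau_{k}-\tau_{k-1}$ have the announced conditional law.

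I would then introduce the filtration $(\mathcal{G}_{n})_{n\geq 1}$ where $\mathcal{G}_{n}$ is generated by $\tilde{\mathsf{F}}_{1},\ldots,\tilde{\mathsf{F}}_{n}$, by the jump times $\tau_{0},\ldots,\tau_{n-1}$, and by the weights $\omega_{e}$ of those dual edges $e$ both of whose incident faces lie in $\tilde{\mathsf{F}}_{n}$; this is precisely the information revealed by time $\tau_{n-1}$. The key observation is a conditional-exponential statement: for $e\in\partial\tilde{\mathsf{F}}_{n}$, let $d_{e}$ denote the dual FPP distance from the root face to the face of $\tilde{\mathsf{F}}_{n}$ incident to $e$; the quantity $d_{e}$ is $\mathcal{G}_{n}$-measurable and at most $\tau_{n-1}$. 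Conditionally on $\mathcal{G}_{n}$, the weights $\omega_{e}$ for $e\in\partial\tilde{\mathsf{F}}_{n}$ are independent, and each $\omega_{e}$ is $\mathrm{Exp}(1)$ conditioned on $\omega_{e}>\tau_{n-1}-d_{e}$; by memorylessness, the residuals $\rho_{e}:=\omega_{e}-(\tau_{n-1}-d_{e})$ are i.i.d.\ $\mathrm{Exp}(1)$. The next edge to fire is the minimiser of $\rho_{e}$, so the classical identity ``min of $p$ i.i.d.\ $\mathrm{Exp}(1)$ is $\mathrm{Exp}(p)$, and arg-min is uniform and independent of the min'' gives, conditionally on $\mathcal{G}_{n}$: the peeled edge $e_{n}$ is uniform on $\partial\tilde{\mathsf{F}}_{n}$, and $\tau_{n}-\tau_{n-1}\sim\mathrm{Exp}(|\partial\tilde{\mathsf{F}}_{n}|)$, the two being independent. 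The spatial Markov property of the UIPT (undisturbed by the independent weights on the outside) then ensures that, given $\mathcal{G}_{n}$ and $e_{n}$, the triangle revealed at step $n+1$ — together with the possibly enclosed Boltzmann region — has exactly the one-step transition law described in Section~\ref{sec:peeling}. Iterating over $n$ and applying the tower property yields both assertions: the conditional independence of the increments $\tau_{k}-\tau_{k-1}$ given $(\tilde{\mathsf{F}}_{n})_{n\geq 1}$ follows because each step consumes a fresh set of $\mathrm{Exp}(1)$ residuals.

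The main obstacle will be the bookkeeping of edges that are ``swallowed'' when the hull operator jumps at some $\tau_{k}$: several dual edges that had been on $\partial \mathsf{F}_{\tau_{k}}$ become interior to $\tilde{\mathsf{F}}_{k+1}$ without their clocks having rung, and one must check that discarding the corresponding weight information does not bias the distribution of the remaining boundary clocks. This too is a direct consequence of memorylessness, since the conditioning on ``not yet fired'' is merely a lower bound on each discarded $\omega_{e}$, which preserves independence and the $\mathrm{Exp}(1)$ law of the residuals on the edges we still care about.
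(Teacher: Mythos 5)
Your proof is correct and takes essentially the same approach as the paper's: both arguments hinge on observing that, conditionally on the history up to time $\tau_{n-1}$, each boundary edge $e$ is constrained only by $w_e > \tau_{n-1}-\mathrm{d_{fpp}}(f_e)$, so by memorylessness the residuals are i.i.d.\ $\mathrm{Exp}(1)$ and the standard min/arg-min decomposition gives both the uniform choice of the peeled edge and the $\mathrm{Exp}(|\partial\tilde{\mathsf F}_n|)$ holding time. Your explicit filtration $(\mathcal{G}_n)$ and the remark about swallowed edges make more precise some points the paper leaves implicit, but the underlying argument is identical.
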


 \noindent{\bf Remark.} Since $ | \partial \tilde{\mathsf{F}}_{k}|\leq k+2$, the 
 last assertion shows that $\tau_k\uparrow \infty$ a.s. as $k\to\infty$.

\proof 
Let $n\geq 1$. Consider an edge $e$ of  $\partial\tilde{\mathsf F}_n$. Then, $e$ is incident
to a unique face $f_e$ of ${\mathsf F}_{\tau_{n-1}}$, and we write 
$ \mathrm{d_{fpp}}(f_e)$ for the first-passage percolation distance
between $f_e$ and the root face (in other words, this is the 
minimal weight of a dual path connecting the root face and $f_e$).
We also write $w_e$ for the weight of $e$, or rather of its dual edge. 
Since $f_e$ is contained in ${\mathsf F}_{\tau_{n-1}}$, we have 
$\mathrm{d_{fpp}}(f_e)\leq \tau_{n-1}$, with equality only if $f_e$
is the triangle that was added at time $\tau_{n-1}$. 
Also it is clear that
$$w_e > \tau_{n-1} - \mathrm{d_{fpp}}(f_e)$$
because otherwise this would contradict the fact that the other face incident to
$e$ is not in ${\mathsf F}_{\tau_{n-1}}$.

Next the lack of
memory of the exponential distribution ensures that,
conditionally on the variables $(\tilde{\mathsf F}_0, \tilde{\mathsf F}_1,\ldots,\tilde{\mathsf F}_n,\tau_1,\ldots,\tau_{n-1})$, the random variables 
$$w_e - (\tau_{n-1} - \mathrm{d_{fpp}}(f_e)),$$
where $e$ varies over the edges of $\partial\tilde{\mathsf F}_n$, are
independent and exponentially distributed with parameter $1$. Now observe
that the next jump  will occur at time 
$$\tau_n = \tau_{n-1} + \min\{w_e - (\tau_{n-1} - \mathrm{d_{fpp}}(f_e)): e\hbox{ edge of }\partial\tilde{\mathsf F}_n\}.$$
It follows that, conditionally on $(\tilde{\mathsf F}_0, \tilde{\mathsf F}_1,\ldots,\tilde{\mathsf F}_n,\tau_1,\ldots,\tau_{n-1})$, the variable $\tau_n-\tau_{n-1}$ is exponential
with parameter $ | \partial \tilde{\mathsf{F}}_{n}|$, and furthermore, the new triangle
added to $\tilde{\mathsf{F}}_{n}$ corresponds to the edge attaining 
the preceding minimum, which is therefore uniformly
distributed over edges of $\partial \tilde{\mathsf{F}}_{n}$. This completes the proof.
\endproof

\begin{proof}[Proof of Theorem \ref{thm:scalingfpp}] 
As in the previous sections, we use the notation $V_{n}$ and $P_{n}$ for the volume and the perimeter of $ \tilde{ \mathsf{F}}_{n}$.
We will establish the following convergence in distribution for the Skorokhod topology 
 \begin{eqnarray} \label{eq:goalfpp} \left(  \frac{P_{[nt]}}{ \mathsf{p}_{\two} \cdot n^{2/3}}, \frac{V_{[nt]}}{  \mathsf{v}_{\two} \cdot  n^{4/3}}, \frac{\tau_{[nt]}}{ (1/ \mathsf{p}_{\two})\cdot n^{1/3}}\right)_{t \geq 0}  &\xrightarrow[n\to\infty]{(d)}&  \left(S_{t}^+, Z_{t},\int_{0}^t \frac{ \mathrm{d}u}{S_{u}^+} \right)_{t \geq 0}.  \end{eqnarray}
Theorem \ref{thm:scalingfpp} then follows from 
\eqref{eq:goalfpp} by the very same arguments we used 
to deduce Theorem \ref{thm:scalinghull} from Proposition \ref{prop:scalinglayers}
in Section \ref{sec:lamperti}.  

The joint convergence of the first two components  in \eqref{eq:goalfpp}  is given by Theorem  \ref{thm:scalingpeeling}. So we need to prove the convergence of the
third component and to check that it holds jointly
with the first two. As in the proof of Proposition \ref{prop:scalinglayers},
we fix $0<s<t$ and we first consider $\tau_{[nt]}-\tau_{[ns]}$. Writing
$$\tau_{[nt]}-\tau_{[ns]}= \sum_{i=[ns]+1}^{[nt]} (\tau_i-\tau_{i-1})$$
and, using Proposition \ref{prop:fpppeeling}, we see that, conditionally
on $(P_k)_{k\geq 0}$, the variable $\tau_{[nt]}-\tau_{[ns]}$ is distributed as
$$\sum_{i=[ns]+1}^{[nt]} \frac{\mathbf{e}_i}{P_i},$$
where the random variables $\mathbf{e}_1,\mathbf{e}_2,\ldots$ are
independent and exponentially distributed with parameter $1$, and
are also independent of $(P_k)_{k\geq 0}$. By the convergence of the first component in 
\eqref{eq:goalfpp}, we have
$$n^{-1/3}\sum_{i=[ns]+1}^{[nt]} \frac{1}{P_i}
= \int_{n^{-1}([ns]+1)}^{n^{-1}([nt]+1)} \frac{\mathrm{d}u}{n^{-2/3}P_{[nu]}}
\xrightarrow[n\to\infty]{(d)}
\frac{1}{\mathsf{p}_{\two}} \int_s^t \frac{\mathrm{d}u}{S^+_u},$$
and, on the other hand,
\begin{align*}
&E\Bigg[\Big(n^{-1/3}\sum_{i=[ns]+1}^{[nt]} \frac{\mathbf{e}_i}{P_i} \,-\,
n^{-1/3}\sum_{i=[ns]+1}^{[nt]} \frac{1}{P_i}\Big)^2\,\Bigg|\, (P_k)_{k\geq 0}\Bigg]\\
&\quad= n^{-2/3} \sum_{i=[ns]+1}^{[nt]} \frac{1}{(P_i)^2}
= \frac{1}{n}\int_{n^{-1}([ns]+1)}^{n^{-1}([nt]+1)} \frac{\mathrm{d}u}{(n^{-2/3}P_{[nu]})^2}
\end{align*}
converges to $0$ in probability as $n\to\infty$. It easily follows that 
\begin{equation}
\label{fpptech}
n^{-1/3}\,(\tau_{[nt]}-\tau_{[ns]}) \xrightarrow[n\to\infty]{(d)}
\frac{1}{\mathsf{p}_{\two}} \int_s^t \frac{\mathrm{d}u}{S^+_u},
\end{equation}
and the previous argument also shows that this convergence
holds jointly with that of the first two components in \eqref{eq:goalfpp}. We can complete the
proof by arguing in a way similar to the end of the proof of Proposition \ref{prop:scalinglayers}.
It suffices to verify that
$$\sup_{n\geq 1}\,\E[n^{-1/3}\tau_{[ns]}] \xrightarrow[s\to 0]{} 0.$$
This is however very easy, since 
$$\E[\tau_{[ns]}]= \E\Big[\sum_{i=1}^{[ns]} \frac{1}{P_i}\Big]$$
and we can use Lemma \ref{lem:1/P} to obtain that $\E[\tau_{[ns]}]\leq C\,(ns)^{1/3}$,
for some constant $C$.
\end{proof}

\subsection{Comparing distances}

\label{rem:fpp} One conjectures that balls for the dual graph distance or the first-passage percolation distance grow asymptotically like ``deterministic'' balls for the graph distance. More precisely,
one expects that there exist two constants $c_{1},c_{2}>0$ such that, for every
$\ve>0$, one has
\begin{eqnarray*}
&&B_{(c_{1}^{-1}-\ve)r}\subset  {B}_r^* \subset B_{(c_{1}^{-1}+\ve)r},\\
&&B_{(c_{2}^{-1}-\ve)r}\subset  \mathsf{F}_r \subset B_{(c_{2}^{-1}+\ve)r}.
\end{eqnarray*}
with high probability when $r$ is large (here $B_{r}^*$ is the ball for the dual graph distance, that is the union of 
all faces that are at dual graph distance less than or equal to $r$ from the root face).
The reason for this belief is the fact that the UIPT is ``isotropic'', in contrast
with deterministic lattices such as $\mathbb{Z}^d$. 
Our results support the previous conjecture since the scaling limits for the
perimeter and volume of hulls are the same for any the balls $B_r$, $ \mathsf{F}_r$ and $B_{r}^*$,
up to multiplicative constants. Note that, if the last display holds, we must have also
$$B^{\bullet,*}_{(c_{1}-\ve)r}\subset  B^{\bullet}_r \subset B^{\bullet,*}_{(c_{1}+\ve)r},$$
$$ \mathsf{F}^\bullet_{(c_{2}-\ve)r}\subset  B^\bullet_r \subset  \mathsf{F}^\bullet_{(c_{2}+\ve)r},$$
with high probability when $r$ is large.
By comparing the limits in distribution of $|B^\bullet_r|$ (Theorem 	\ref{thm:scalinghull}),
 of $|B_{r}^{\bullet,*}|$ (Theorem \ref{thm:scalinghulldual}) and of $| \mathsf{F}_{r}^\bullet|$ (Theorem  \ref{thm:scalingfpp}), we see that if the previous conjecture
holds, the constants $c_{1}$ and $c_{2}$ must be equal to
$$c_{1}= \frac{ \mathsf{h}^*_{\two}}{ \mathsf{h}_{ \two}} = \frac{1 + \mathsf{a}_{\two}}{ \mathsf{a}_{\two}} = 4 \quad \mbox{ and } \quad c_{2}= \frac{1}{ \mathsf{p}_{\two} \mathsf{h}_{\two}}= \frac{1}{ \mathsf{a}_{\two}} = 3.$$

See \cite[Remark 5]{AB14} for related calculations about 
two-point and three-point functions  for first-passage percolation on
type I triangulations 
(in that case, the analog of the constant $ \mathsf{a}_{\two}$
is $ \mathsf{a}_{\one} = 1/(2 \sqrt{3})$, as we shall see below). The proof of the above conjecture is discussed, in the slightly different setting
of type I triangulations, in the forthcoming work \cite{CLGmodif}.

\section{Other models}
\label{sec:general}
Although we chose to focus on type II triangulations, our results can be extended to other classes of infinite random planar maps. Roughly speaking, one only needs to replace the constants $ \mathsf{a}_{\two}, \mathsf{t}_{\two}, \mathsf{b}_{\two}$ defined in \eqref{eq:tailX}, in Proposition \ref{prop:boltz} and in Proposition \ref{prop:A->1/3}  by their appropriate values in the model in consideration.  
All our results should then go through with the constants $ \mathsf{v}_{\cdot },\mathsf{p}_{\cdot}, \mathsf{h}_{\cdot}, \mathsf{h}^*_{\cdot}$ evaluated via the same 
``universal'' relations from the constants $ \mathsf{a}_{\cdot}, \mathsf{t}_{\cdot}, \mathsf{b}_{\cdot}$. In this section we carefully explain how to do this in two
particular cases, namely type I triangulations and quadrangulations. It may well be the case that our techniques can be extended to even more general classes of random planar maps such as the (regular critical) Boltzmann triangulations considered in \cite{Bud15}.

\subsection{Type I triangulations}
Let us consider the case of type I triangulations, where both loops and
multiple edges are allowed. The construction of the UIPT in this case is not treated by Angel and Schramm \cite{AS03},
but the techniques of \cite{AS03} can easily be extended using the corresponding
enumeration results (see below). Alternatively, the construction of the type I UIPT follows as 
a special case of the recent results of Stephenson \cite{St14}.
We denote the UIPT
for type I triangulations by $ T_{\infty}^{(1)}$. Let us list the enumeration results 
corresponding to those of Section \ref{sec:enumer}. These results
may be found in Krikun \cite{Kri07} (Krikun uses the number of edges as the size parameter and in order to apply his formulas we note that a triangulation of the $p$-gon 
with $n$ inner vertices has $3n+2p-3$ edges).

For every $p\geq 1$ and $n\geq 0$, let $ \mathcal{T}^{(1)}_{n,p}$ stand for the set of all type I triangulations with 
$n$ inner vertices and a simple boundary of length $p$, which are rooted
on an edge of the boundary in the way explained in 
Section \ref{sec:enumer}. We have for $(n,p) \ne (0,1)$
$$\# \mathcal{T}_{n,p}^{(1)} =4^{n - 1}  \frac{p\, (2p)!\,(2 p + 3 n - 5)!!}{(p!)^2\,n! \,
(2 p + n - 1)!!} \underset{n \to \infty}{\sim}   C^{(1)}(p) \,(12 \sqrt{3})^{n}\, n^{-5/2},$$
where
$$C^{(1)}({p}) = \frac{3^{p-2} \, p \, (2 p)!}{4 \sqrt{2 \pi} \,  (p!)^2}  \underset{p \to \infty}{\sim} \frac{1}{36\pi \sqrt{2}} \, \sqrt{p} \ 12^p. 
$$
We then set
$Z^{(1)}(p) = \sum_{ n \geq 0} \# \mathcal{T}_{n,p}^{(1)} (12 \sqrt{3})^{-n}$ and we have the formula (see \cite[Section 2.2]{ACpercopeel})
$$Z^{(1)}(p) = \frac{6^p\,(2p-5)!! }{8 \sqrt{3} \,p!} \quad \text{if \ } p\geq 2,
  \qquad \qquad  Z^{(1)}(1) =  \frac{2 - \sqrt{3}}{4}.  $$
The generating series of $Z^{1}(p)$ can also be computed explicitly from \cite[formula (4)]{Kri07} 
and an appropriate change of variables (we omit the details):
  \begin{eqnarray*}  \sum_{p\geq 0} Z^{(1)}(p+1) z^p=  \frac{1}{2}+ \frac{ (1- 12 z)^ {3/2}-1}{24 \sqrt {3} z}. \end{eqnarray*}
In particular, the analog of \eqref{eq:asympZp} is
$$Z^{(1)}(p+1) \underset{p\to \infty}{\sim}   \frac{ \sqrt{3}}{8 \sqrt{\pi}} \,12^p \,p^{-5/2},$$
and similarly as in \eqref{eq:asympZp}, we set
$$\mathsf{t}_\one = \frac{ \sqrt{3}}{8 \sqrt{\pi}}.$$

The peeling algorithm discovering $ T_{\infty}^{(1)}$ is then described in a very similar
way as in Section \ref{sec:peeling}. The only difference is that we now need 
to consider the possibility of loops. With the notation of Section \ref{sec:peeling}, 
and supposing that the revealed region has a boundary of size $p\geq 1$, events of type $ \mathsf{L}_{0}$ or $ \mathsf{R}_{0}$,
or  of type $ \mathsf{L}_{p-1}$ or $ \mathsf{R}_{p-1}$, may occur (the definition
of these events should be obvious from Fig.\,\ref{cases}). The respective probabilities
of events $ \mathsf{C}$, $ \mathsf{L}_{k}$ or $ \mathsf{R}_{k}$ are given by
formulas analogous to \eqref{probatypeC} and \eqref{probatypeR}, where 
$2/27$ is replaced by $1/(12\sqrt{3})$, the functions $C$ and $Z$  are replaced 
respectively by $C^{(1)}$ and $Z^{(1)}$, and
finally $k$ is allowed to vary in $\{0,\ldots,p-1\}$. 

An analog of Proposition \ref{prop:positive} holds, and the constant
$\mathsf{p}_\two$ has to be replaced by
$$\mathsf{p}_\one = \Big( \frac{8\mathsf{t}_\one\sqrt{\pi}}{3}\Big)^{2/3} = 3^{-1/3}.$$
Similarly, there is a version of Proposition \ref{prop:boltz} in the type I case, and the
constant $\mathsf{b}_\two$ is replaced by 
$$\mathsf{b}_\one= \frac{4}{3}$$
whereas the limiting distribution remains the same. Finally, the analog of Proposition 
\ref{prop:A->1/3} involves the new constant
$$\mathsf{a}_\one = \frac{1}{2\sqrt{3}}.$$

The proofs of Theorems \ref{thm:scalingpeeling}, \ref{thm:scalinghull}, \ref{thm:scalinghulldual} and \ref{thm:scalingfpp} can then be adapted easily to the UIPT $ T_{\infty}^{(1)}$. In these
statements,  $\mathsf{p}_\two$ is replaced by
$\mathsf{p}_\one$ and the other constants 
$ \mathsf{v}_{\two}, \mathsf{h}_{\two}$ and $ \mathsf{h}^*_{\two}$ are replaced respectively by 
$$\mathsf{v}_{\one} = (\mathsf{p}_\one)^2 \mathsf{b}_\one= 4\cdot 3^{-5/3}
,\qquad \mathsf{h}_{\one} =\frac{\mathsf{a}_\one}{\mathsf{p}_\one}=  \frac{1}{2 }\,3^{-1/6} \quad \hbox{and}\quad \mathsf{h}^*_{\one} =\frac{1+\mathsf{a}_\one}{\mathsf{p}_\one}$$ 
We note that $\mathsf{p}_\one/(\mathsf{h}_\one)^2= \mathsf{p}_\two/(\mathsf{h}_\two)^2$,
which, by Theorem \ref{thm:scalinghull} and its type I analog, means that the scaling 
limit of the perimeter of hulls is exactly the same for type I and for type II triangulations.
This fact can be explained by a direct relation between the UIPTs of type I and of type II, but
we omit the details. 
 
\subsection{Quadrangulations}

Let us now consider the Uniform Infinite Planar Quadrangulation (UIPQ), which is
denoted here by $Q_\infty$.
This case requires more changes in the arguments. We first note that a quadrangulation with a simple boundary necessarily has an even perimeter. 
For every $p\geq 1$, 
let $ \mathcal{Q}_{n,p}$ stand for  the set of all quadrangulations with a simple boundary of perimeter $2p$  and $n$ inner vertices, which are rooted at an oriented edge of the boundary in such a way that the external face lies on the right
of the root edge.  For $n\geq 0$ and $p \geq1$, we read from \cite[Eq.~(2.11)]{BG09}  that
$$
  \#  \mathcal{Q}_{n,p} = 3^{n-1} \frac{(3 p)! (3p-3 +2n)!}{
n! p! ( 2 p-1)! (n+3p-1)!} \underset{n\to\infty}{\sim}  C^\square({p}) 12^n n^{-5/2}, 
$$
where
$$C^\square(p) = \frac{8^{p-1} (3 p)!}{3 \sqrt{\pi} p! (2p-1)!} \underset{p\to\infty}{\sim} \frac{1}{8
\sqrt{3} \pi} 54^p \sqrt{p}.$$
We have also, for every $p\geq 2$,
  \begin{eqnarray*} Z^\square(p)=\sum_{n\geq 0}
\#  \mathcal{Q}_{n,p} 12^{-n} = \frac{8^p (3p-4)!}{(p-2)! (2p)!}, 
\end{eqnarray*}
and $Z^\square(1)=4/3$. Furthermore, 
  \begin{eqnarray} Z^\square(p+1) \underset{p\to\infty}{\sim} \frac{1}{ \sqrt{3\pi}} 54^{p}p^{-5/2}, \quad 
\sum_{k\geq 0} Z^\square(k+1)54^{-k} = 3/2 , \quad 
\sum_{k\geq 0} k Z^\square(k+1)54^{-k} = 1/2.   \end{eqnarray}

The transitions in the peeling process of the UIPQ are more complicated than previously because of 
additional cases. 
If at step $n \geq 0$ the perimeter of the discovered quadrangulation $ \mathsf{Q}_{n}$ 
is equal to $2m$,  then the revealed quadrangle at the next step may have three different shapes (see Fig.\,\ref{cases}): 
\begin{enumerate}
 \item  \textbf{Shape $ \mathsf{C}$:} The revealed  quadrangle has two vertices in the unknown region,  an event of probability $$\mathbb{P}( \mathsf{C} \mid | \partial  \mathsf{Q}_{n}| = 2m)=  \mathbf{q}_{-2}^{(m)} = 12^{-2}\frac{C^\square(m+1)}{C^\square(m)}.$$
 
 \item   \textbf{Shapes $ \mathsf{L}_{k}$ and $ \mathsf{R}_{k}$, for $k \in \{ 0,1, \ldots , 2m-1\}$:} The revealed  quadrangle has three vertices on the boundary of $ \mathsf{Q}_{n}$. This quadrangle then ``swallows'' a part of the boundary of $\partial \mathsf{Q}_n$ of length $k$. 
 This event is denoted by $ \mathsf{L}_{k}$ or $ \mathsf{R}_{k}$ according to whether the part of the boundary that is swallowed is on the right or on the left of the peeled edge. Note that
 the revealed face encloses a finite quadrangulation of perimeter $k+1$ if $k$ is odd and $k+2$ if $k$ is even. These events have probability  
 \begin{align*} \mathbb{P}(  \mathsf{L}_{2k} \mid | \partial \mathsf{Q}_{n}| = 2m) &= \mathbb{P}(  \mathsf{L}_{2k+1} \mid | \partial \mathsf{Q}_{n}| = 2m)  = \mathbb{P}(  \mathsf{R}_{2k} \mid | \partial \mathsf{Q}_{n}| = 2m) = \mathbb{P}(  \mathsf{R}_{2k+1} \mid | \partial \mathsf{Q}_{n}| = 2m)\\ 
 &=  \mathbf{q}^{(m)}_{2k+1} = \mathbf{q}^{(m)}_{2k} = \frac{Z^\square(k+1)}{12}\frac{C^\square(m-k)}{C^\square(m)}.  \end{align*}
 
 \item \textbf{Shapes $ \mathsf{L}_{k_{1},k_{2}}$, $ \mathsf{R}_{k_{1},k_{2}}$ and $ \mathsf{C}_{k_{1},k_{2}}$
 for $k_1,k_2\geq 1$ odd and such that $k_1+k_2<2m$:} This last case occurs when the revealed quadrangle has its four vertices on $ \partial \mathsf{Q}_{n}$. It then encloses two finite quadrangulations of respective perimeters $k_{1}+1$ and $k_{2}+1$ either both on the left side of the peeled edge in case $ \mathsf{L}_{k_{1},k_{2}}$, or one
 on each side of the peeled edge  in case $ \mathsf{C}_{k_{1},k_{2}}$, or both on the right side of the peeled edge in case $ \mathsf{R}_{k_{1},k_{2}}$. These three events have the same probability: writing $k_1=2j_1+1$ and $k_2=2j_2+1$, with $j_1+j_2<m-1$,
  \begin{align*} \mathbb{P}( \mathsf{L}_{k_{1},k_{2}} \mid | \partial \mathsf{Q}_{n}| =2m)&=\mathbb{P}( \mathsf{R}_{k_{1},k_{2}} \mid | \partial \mathsf{Q}_{n}| =2m)=\mathbb{P}( \mathsf{C}_{k_{1},k_{2}} \mid | \partial \mathsf{Q}_{n}| =2m)\\
 &=\mathbf{q}^{(m)}_{k_{1},k_{2}} = Z^\square(j_{1}+1)Z^\square(j_{2}+1)\frac{C^\square(m-j_{1}-j_{2}-1)}{C^\square(m)}.  \end{align*}
\end{enumerate}
\begin{figure}[!h]
 \begin{center}
 \includegraphics[width=16cm]{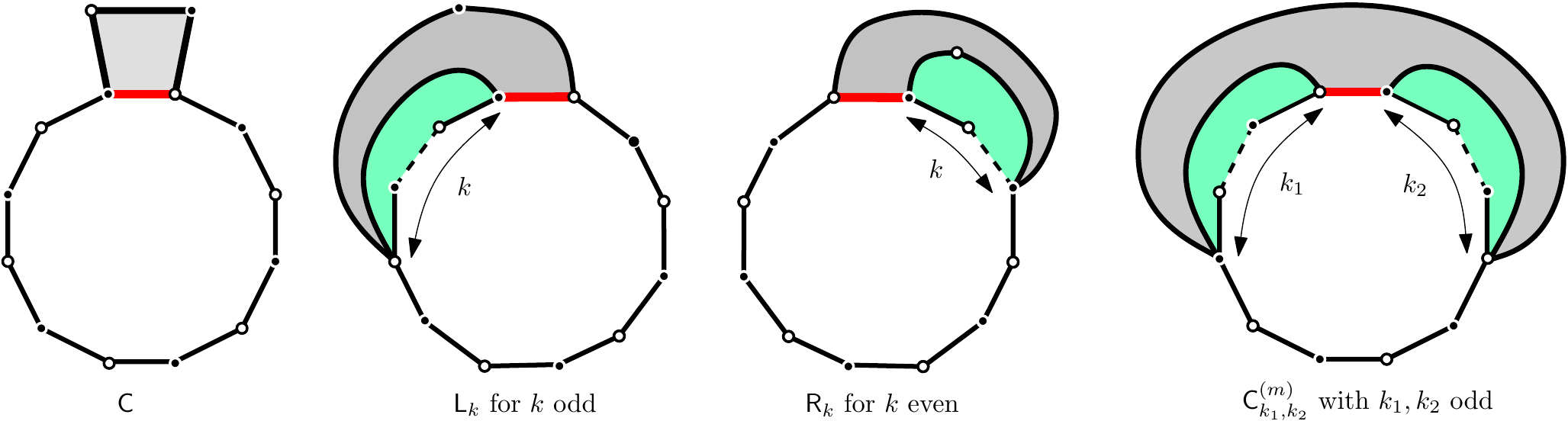}
 \caption{A few peeling transitions in the quadrangular case.}
 \end{center}
 \end{figure}

Furthermore, conditionally on each of the above cases, the finite quadrangulations enclosed by the revealed face are independent Boltzmann quadrangulations with the prescribed perimeters. Let $ P^\square_{n}$ stand for the half-perimeter at step $n$ in the peeling process.
Then, similarly as in the triangular case, the Markov chain $(P^\square_{n})$ 
is obtained by conditioning a random walk $X$ on $\mathbb{Z}$   to stay (strictly) positive,
and the increments of $X$ are now distributed as follows:
 \begin{eqnarray*}\mathbb{E}[f(X_{n+1})\mid X_n] = f(X_{n}+1) \cdot  \mathbf{q}_{-2} +
 \sum_{k=0}^\infty f(X_{n}-k) \cdot \Big(2( \mathbf{q}_{2k}+ \mathbf{q}_{2k+1}) + 3 \sum_{\begin{subarray}{c} k_{1}+k_{2} = 2k\\ k_{1},k_{2} \geq 1 \ \mathrm{ odd} \end{subarray}}  \mathbf{q}_{k_{1},k_{2}}\Big),  \end{eqnarray*}
 where $ \mathbf{q}_{j}= \lim_{m\to \infty}  \mathbf{q}_{j}^{(m)}$ and 
 $ \mathbf{q}_{k_1,k_2}= \lim_{m\to \infty}  \mathbf{q}_{k_1,k_2}^{(m)}$ as in the triangular case. From the enumeration results, we get, for every $k\geq 0$,
  \begin{eqnarray} \label{eq:tailXq}  \mathbb{P}( \Delta X  = -k) = 2(q_{2k}+q_{2k+1}) + 3 \sum_{\begin{subarray}{c} k_{1}+k_{2} = 2k\\ k_{1},k_{2} \geq 1 \ \mathrm{ odd} \end{subarray}} q_{k_{1},k_{2} } \underset{k\to\infty}{\sim}   \frac{1}{2 \sqrt{3 \pi}} k^{-5/2}.  \end{eqnarray}
  
The results of Sections \ref{sec:perimeter} and \ref{sec:volume} 
can then be extended to the UIPQ $Q_\infty$. Comparing \eqref{eq:tailXq} with \eqref{eq:tailX}, we see that the role of the constant $\mathsf{t}_\two$
is now played by $\mathsf{t}_\square= 1/(4\sqrt{3\pi})$.
Then the convergence in distribution
of Proposition \ref{prop:positive} holds for $ P^\square_{n}$, with the constant 
$\mathsf{p}_\two$ replaced by
$$\mathsf{p}_\square=\Big( \frac{8\mathsf{t}_\square\sqrt{\pi}}{3}\Big)^{2/3} = \frac{2^{2/3}}{3}.$$
An analog of Proposition \ref{prop:boltz}, where we now consider a 
Boltzmann quadrangulation $Q^{(p)}$ of the $2p$-gon, also holds in the form
$$p^{-2}\,\mathbb{E}[|Q^{(p)}|] \underset{p\to\infty}{\longrightarrow} \frac{9}{2}=: \mathsf{b}_\square.$$

The peeling by layers requires certain modifications in the case of quadrangulations. As previously, the ball $B_{r}(Q_{\infty})$ is
the planar map obtained by keeping only those faces of $Q_{\infty}$ that are incident to at least one vertex whose graph distance
from the root vertex is smaller than or equal to $r-1$, and the hull $ B_{r}^\bullet(Q_{\infty})$ is obtained by filling in the finite holes of $B_{r}(Q_{\infty})$.
The boundary $ \partial B_{r}^\bullet(Q_{\infty})$ is now a simple cycle that visits  alternatively  vertices at distance $r$ and $r+1$ from the root vertex. If we move around the boundary of this cycle in clockwise order, we encounter two types of (oriented) edges, edges $r+1\to r$ connecting a vertex at distance $r+1$ to a vertex at distance $r$, and edges $r\to r+1$ connecting a vertex at distance $r$ to a vertex at distance $r+1$.

To describe the  peeling by layers algorithm, suppose that, at a certain step of the peeling process, 
the revealed region is the hull $ {B}^\bullet_{r}(Q_{\infty})$. Then we choose  (deterministically or using some 
independent randomization) an edge of the boundary of type $r+1\to r$. We reveal the face incident to this edge that is not already in $ {B}^\bullet_{r}(Q_{\infty})$ and as usual we fill in the holes that may
have been created. At the next step, either the (new) boundary has an edge of type $r+1\to r$ that is incident to the quadrangle 
revealed in the previous step, and we peel this edge, or we peel the first edge of type $r+1\to r$ coming after the revealed quadrangle in clockwise order.
We continue inductively, ``moving around the boundary in clockwise order''. See Fig.~\ref{fig:layersq} for an example. After a finite number of steps, the boundary does not contain any vertex at distance $r$, and it is easy to verify that the revealed region is then the hull $ {B}^\bullet_{r+1}(Q_{\infty})$, so that we
can continue the construction by induction. 

\begin{figure}[!h]
 \begin{center}
 \includegraphics[width=14cm]{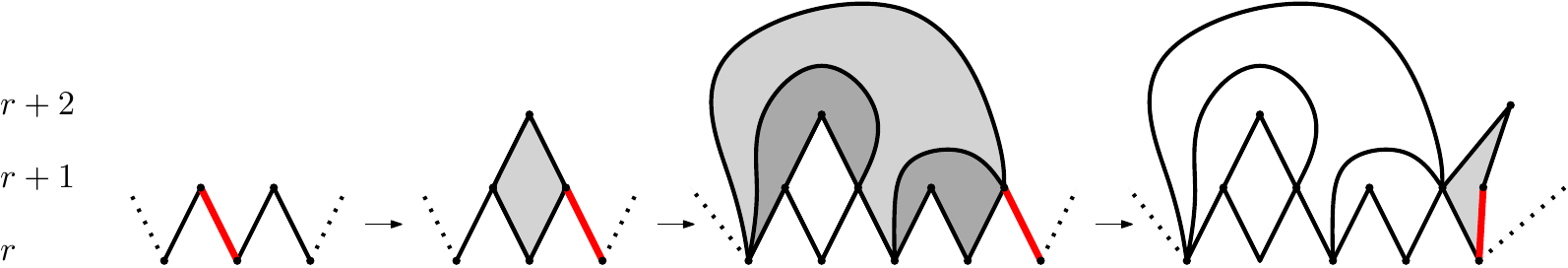}
 \caption{ \label{fig:layersq}Illustration of the peeling by layers in the quadrangular case: we choose an edge to start discovering the new layer and then peel from left to right all the edges that contain a vertex at distance $r$ from the root vertex.}
 \end{center}
 \end{figure}
Proposition \ref{prop:A->1/3} is adapted as follows. For every $r \geq 1$, let $ \mathscr{L}^\square_{r}$ be the set of all vertices of $ \partial B_{r}^\bullet(Q_{\infty})$ that are at distance exactly $r$ from the root vertex. 
Clearly the perimeter $ |\partial B_{r}^\bullet(Q_{\infty})|$ is equal to $2 \# \mathscr{L}^\square_{r}$. We also denote  the  union of all $ \mathscr{L}^\square_{r}$ for $r \geq 1$ by $ \mathscr{L}^\square$. Finally, for $n \geq 1$, we let $A^\square_{n}$ be the number of vertices of $ \mathscr{L}^\square$ that are in the interior of the discovered region at step $n$. Then the analog of Proposition \ref{prop:A->1/3} reads
$$ \frac{A_{n}}{n} \xrightarrow[n\to\infty]{(P)} \frac{1}{3} =:  \mathsf{a}_{\square} .$$
The idea of the proof is the same but technicalities become 
somewhat more complicated (we omit the details). 

Versions of Theorems \ref{thm:scalingpeeling}, \ref{thm:scalinghull} then hold for the UIPQ $Q_{\infty}$. In these
statements we now interpret the size of the boundary
as half its perimeter,  the constant $\mathsf{p}_\two$ is replaced by
$\mathsf{p}_\square$ and the other constants 
$ \mathsf{v}_{\two}$ and $ \mathsf{h}_{\two}$ are replaced respectively by 
$$\mathsf{v}_{\square} = (\mathsf{p}_\square)^2 \mathsf{b}_\square= 2^{1/3}
\quad\hbox{and}\quad \mathsf{h}_{\square} =\frac{\mathsf{a}_\square}{\mathsf{p}_\square}= 
2^{-2/3} .$$ 
 We observe that the convergence of volumes
in the analog of Theorem \ref{thm:scalinghull} for the UIPQ was already obtained in 
\cite{CLGHull} as a consequence of the invariance principles relating the UIPQ and
the Brownian plane (see Theorems 5.1 and 1.3 in \cite{CLGHull}). It would be 
significantly harder to derive the convergence of boundary lengths from
the same invariance principles. On the other hand, Krikun \cite{Kri05} has 
a version of the scaling limit for boundary lengths in the case of quadrangulations, but with
a different definition of hull boundaries leading to different constants. \medskip 

It is also possible to adapt Theorems \ref{thm:scalinghulldual} and \ref{thm:scalingfpp} to the setting of quadrangulations: the limiting process in the analog of 
Theorem \ref{thm:scalinghulldual} (where we again consider the half-perimeter rather than  the perimeter) is $ (\mathsf{p}_\square\cdot\mathcal{L}_{  t/ \mathsf{h}^*_{\square}   }, \mathsf{v}_\square\cdot\mathcal{M}_{t/\mathsf{h}^*_{\square} })_{t \geq 0}$, with 
$$ \mathsf{h}^*_{\square} =   \frac{1+ \mathsf{a}^*_{\square}}{2  \mathsf{p}_{\square}},$$ where $  \mathsf{a}^*_{\square}= \frac{1}{2}$ is the mean number of edges ``swallowed'' on the right of the peeled edge in 
a peeling step for the half-plane UIPQ (see \cite[Eq. (8)]{ACpercopeel} where this quantity is denoted by $\delta^\square/2$).   The extra multiplicative factor $2$ in the time parameter comes from the fact that we
are dealing with half-perimeters. Similarly, the limiting process in the analog of Theorem \ref{thm:scalingfpp} is $ (\mathsf{p}_\square\cdot\mathcal{L}_{2\mathsf{p}_\square  t   }, \mathsf{v}_\square\cdot\mathcal{M}_{2\mathsf{p}_\square t })_{t \geq 0}$.

\bibliographystyle{siam}

\end{document}